\documentclass[12pt]{amsart}
\usepackage[utf8]{inputenc}
\usepackage{mathrsfs} 
\usepackage{amssymb}
\usepackage{bm}
\usepackage{graphicx}
\usepackage[centertags]{amsmath}
\usepackage{amsfonts}
\usepackage{amsthm}
\linespread{1.18}
\usepackage{enumerate}
\usepackage{tocvsec2}
\usepackage{xcolor}
\usepackage[margin=1in]{geometry}

\newtheorem{theorem}{Theorem}[section]
\newtheorem*{theorem*}{Theorem}

\newtheorem{corollary}[theorem]{Corollary}
\newtheorem{lemma}[theorem]{Lemma}
\newtheorem{rem}[theorem]{Remark}

\newtheorem{proposition}[theorem]{Proposition}

\newtheorem{example}{Example}[section]

\theoremstyle{definition}

\newcommand{\ee}{\varepsilon}

\begin{document}

\title{A Ramsey theorem for pairs in trees}

\begin{abstract} We prove a sharp structural result concerning finite colorings of pairs in well-founded trees. 

\end{abstract}

\author{R.M. Causey}
\address{Department of Mathematics, Miami University, Oxford, OH 45056, USA}
\email{causeyrm@miamioh.edu}

\author{C. Doebele}
\address{Department of Mathematics, Miami University, Oxford, OH 45056, USA}
\email{doebelce@miamioh.edu}

\thanks{2010 \textit{Mathematics Subject Classification}. Primary: 	05D10.}
\thanks{\textit{Key words}: Combinatorics, Ramsey theory, trees.}

\maketitle

\section{Introduction}

The two most classical theorems in Ramsey theory, found in \cite{R}, are the following: 

\begin{theorem}\begin{enumerate}[(i)]\item For any $n,k< \omega$ and any function $f:[\omega]^{n+1}\to k+1$, there exists a cofinal subset $M$ of $\omega$ such that $f|_{[M]^{n+1}}$ is constant. \item For any $p,n,k< \omega$, there exists a number $r=r(p,n,k)< \omega$ such that for any $r\leqslant N< \omega$ and any function $f:[N+1]^{n+1}\to k+1$, there exists a subset $M$ of $N+1$ with cardinality $p+1$ such that $f|_{[M]^{n+1}}$ is constant. \end{enumerate}

\label{Ramsey}
\end{theorem}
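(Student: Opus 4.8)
The plan is to prove (i) by induction on $n$ and then to deduce (ii) from (i) by a compactness argument.

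For (i), the base case $n=0$ is just the infinite pigeonhole principle: a function $f\colon[\omega]^1\to k+1$ is, after identifying singletons with their elements, a function $\omega\to k+1$, so one of the $k+1$ colour classes is infinite, and that class is the desired cofinal $M$. For the inductive step, assume the statement for $n$ and let $f\colon[\omega]^{n+2}\to k+1$ be given. I would recursively build a strictly increasing sequence $a_0<a_1<\cdots$ together with a decreasing chain of infinite sets $\omega=M_0\supseteq M_1\supseteq\cdots$: set $a_i=\min M_i$, consider $g_i\colon[M_i\setminus\{a_i\}]^{n+1}\to k+1$ given by $g_i(s)=f(\{a_i\}\cup s)$, and use the inductive hypothesis to choose an infinite $M_{i+1}\subseteq M_i\setminus\{a_i\}$ on which $g_i$ is constant, say with value $c_i$. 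Writing $A=\{a_i:i<\omega\}$, the construction guarantees $f(\{a_i\}\cup s)=c_i$ whenever $s\in[A]^{n+1}$ and $a_i<\min s$. The assignment $i\mapsto c_i$ takes values in the finite set $k+1$, so by pigeonhole there is an infinite $I\subseteq\omega$ with $c_i=c$ for all $i\in I$; then $M=\{a_i:i\in I\}$ is cofinal in $\omega$, and for any $\{a_{i_0}<\cdots<a_{i_{n+1}}\}\in[M]^{n+2}$ one gets $f(\{a_{i_0},\ldots,a_{i_{n+1}}\})=c_{i_0}=c$, so $f$ is constant on $[M]^{n+2}$.

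For (ii), I would argue by contradiction using König's lemma. Fix $p,n,k$ and suppose no $r(p,n,k)$ works, so that for every $N<\omega$ there is a colouring $[N+1]^{n+1}\to k+1$ admitting no monochromatic subset of size $p+1$; call such colourings \emph{bad}. Any restriction of a bad colouring to $[m+1]^{n+1}$ with $m\leqslant N$ is again bad, so the set $T$ of all bad colourings of $[m+1]^{n+1}$ (over all $m<\omega$), ordered by restriction, is an infinite tree; since for each fixed $m$ there are only finitely many functions $[m+1]^{n+1}\to k+1$, this tree is finitely branching. By König's lemma $T$ has an infinite branch, and the union of the colourings along it is a single colouring $f\colon[\omega]^{n+1}\to k+1$ each of whose finite restrictions is bad, so $f$ has no monochromatic subset of size $p+1$, in particular no infinite monochromatic subset, contradicting part (i). (Alternatively one can prove (ii) directly by an induction on $n$ with an inner induction on $p$, in the style of Erd\H{o}s and Rado, which has the advantage of producing explicit, if tower-type, bounds on $r$.)

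I expect the main point requiring care to be the inductive step of (i) — the bookkeeping needed to ensure that the colours "seen from below" stabilise and that the final set really is homogeneous for all $(n+2)$-element subsets — together with the verification in (ii) that a branch through the König tree genuinely assembles into a colouring of all of $\omega$ that inherits badness from every finite stage.
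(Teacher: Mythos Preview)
Your proposal is correct: the induction on $n$ for part (i) and the K\"onig's-lemma compactness argument for part (ii) are both standard and sound, and the bookkeeping you flag in (i) is handled correctly by the nesting $M_{i+1}\subseteq M_i\setminus\{a_i\}$, which guarantees $a_j\in M_{i+1}$ for all $j>i$. Note, however, that the paper does not supply its own proof of this theorem --- it is stated in the introduction as a classical result and attributed to Ramsey's original paper \cite{R} --- so there is nothing in the paper to compare your argument against.
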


Here, for a given set $M$, $[M]^{n+1}$ denotes the set of $n+1$-element subsets of $M$. Since the results of \cite{R}, Ramsey theory has developed into a rich area of mathematics concerned with the following general type of question, stated by Erd\H{o}s and Rado \cite{ER}: If sufficiently many objects are distributed over not too many classes, then at least one  class contains many of these objects.   Varying the objects, the classes, and one's notion of ``many'' produces a broad class of questions. Above, the objects are $n+1$-element subsets of a given set, the classes are level sets of the function $f$. In $(i)$, ``many'' means all $n+1$-element subsets of an infinite set, while in $(ii)$, ``many'' means all $n+1$-element subsets of a set of cardinality $p+1$.

The seminal work of Erd\H{o}s and Rado began by considering sets and well-ordered sets and partitions of the $n+1$-element subsets, with the goal being to find subsets with the same cardinality or order type all of whose $n+1$-element subsets lie in the same member of the partition.   Other authors have studied such partition problems for sets with other or more general structure. For example, Baumgartner \cite{B} has shown that for any countable ordinal $\alpha$ and any partition of the topological space $\omega^{\omega^\alpha}$ into finitely many sets, one of the members of the partition must contain a subset homeomorphic to the entire space.  An additional topic is the study of partition relations on trees (that is, partially ordered sets $(P, \leqslant)$ such that for each $t\in P$, the set $\{s\in P: s\leqslant t\}$ is well-ordered).  An important result regarding partition relations for trees is the Halpern-L\"{a}uchli Theorem \cite{HL}, which concerns partitions of finite products of infinite trees.   The Halpern-L\"{a}uchli Theorem has produced many variations and generalizations, such as Milliken's theorem \cite{Mi} on finite partitions of strong trees, countable colorings of perfect trees \cite{L}, and a dual version \cite{TT}.  For a fuller discussion of the theorem and its variants, see \cite{DH} and the references therein.

In this work, the objects of interest to us will be well-founded trees. We will replace cardinality in the previous theorems with the rank of the tree. We will color the linearly ordered pairs in the tree and hope to find  subtrees of large rank with monochromatic pairs.     Our argument is, under certain conditions on the ordinals $\alpha_i$, to view a tree of rank $\alpha_0 \cdot\alpha_1$ as a ``tree of trees.''  We will think of partitioning subsets of this tree into subtrees of rank $\alpha_0$ in such a way that the members of the partition become nodes of a tree of rank $\alpha_1$.   More generally, given a tree of rank $\alpha_0 \cdot\alpha_1\cdot\ldots \cdot\alpha_n$, we will view this as being a ``tree of trees of $\ldots$ of trees.''    At the highest resolution, we see a single tree of rank $\alpha_0$.  Zooming out one level, we see a tree of rank $\alpha_0\cdot\alpha_1$. Zooming out two levels, we see a tree of rank $\alpha_0\cdot\alpha_1\cdot\alpha_2$, etc.  The lowest resolution is, of course, the entire tree of rank $\alpha_0 \cdot\ldots \cdot\alpha_n$.   We then define the separation of two linearly ordered nodes in the tree to be the lowest resolution which allows us to see both nodes at once.  That is, the separation of two nodes $s$ and $t$ of the tree is defined to be $0$ if both $s$ and $t$ live in the same subtree of rank $\alpha_0$.  The separation is $1$ if they live in separate subtrees of rank $\alpha_0$, but the same subtree of rank $\alpha_0\cdot\alpha_1$, etc.  Our primary structural result is that we can stabilize any coloring of pairs on a subtree of a tree of a certain rank in such a way that the color of a pair is determined by the separation of that pair.  In what follows, for a tree $P$ and a pair $(s,t)$ in the tree, $\varsigma_P(s,t)$ denotes the separation, which we formally define in Section $3$. Also, for $1\leqslant n< \omega$, we define  $$\Lambda_n(P)=\{(t_0, \ldots, t_{n-1})\in \Pi_{i<n}P : t_0<\ldots <t_{n-1}\}.$$  Our main theorem, Theorem \ref{main1},  completely elucidates the stabilization properties of finite colorings of pairs in well-founded trees, and provides an explicit description of the nature of subtrees with monochronatic pairs. 

\begin{theorem} Suppose $P$ is a tree and $k,l< \omega$.  Suppose also that there exist ordinals $\ee_0\geqslant \ldots \geqslant \ee_l$ such that $$\text{\emph{rank}}(P)=\omega^{\omega^{\ee_0}}\cdot\ldots \cdot\omega^{\omega^{\ee_l}}.$$      Then for any function $f:\Lambda_2(P)\to k+1$,  there exist a subtree $Q$ of $P$ and a function $F:l+1\to k+1$ such that $\text{\emph{rank}}(Q)=\text{\emph{rank}}(P)$ and for each $(s,t)\in \Lambda_2(Q)$,   $\varsigma_Q(s,t)=\varsigma_P(s,t)$ and $F(\varsigma_Q(s,t))=f(s,t)$. 

\label{main1}
\end{theorem}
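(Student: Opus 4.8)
The plan is to prove the theorem by induction on $l$, after first establishing, as a separate preliminary, the ``single-level'' case: for every ordinal $\varepsilon$ and every finite coloring of $\Lambda_2$ of a tree of rank $\omega^{\omega^{\varepsilon}}$ there is a monochromatic subtree of the same rank. This preliminary is precisely the case $l=0$ (there $\varsigma_P\equiv 0$, so the conclusion is just that $f$ is constant), and it is the combinatorial engine of the whole argument; I would prove it by transfinite induction on $\varepsilon$. The successor step $\varepsilon=\delta+1$ is the substantial one: since $\omega^{\omega^{\delta+1}}=\sup_n(\omega^{\omega^{\delta}})^n$, a tree of this rank is, in the sense of Section 3, a tree of trees whose blocks have rank $\omega^{\omega^{\delta}}$, and one combines the theorem applied inside arbitrarily deep finite portions (each of which is a finite product of rank-$\omega^{\omega^{\delta}}$ factors, hence covered by the already-known instances) with a diagonalization across these portions and a routine pigeonhole to extract a full-rank monochromatic subtree; the limit step is routine.

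For the inductive step, assume the theorem for $l-1$, put $\alpha_i=\omega^{\omega^{\varepsilon_i}}$, so that $\mathrm{rank}(P)=\alpha_0\cdot R$ with $R=\alpha_1\cdots\alpha_l$, the sequence $\varepsilon_1\geqslant\cdots\geqslant\varepsilon_l$ being again non-increasing. Using the tree-of-trees machinery of Section 3, realize $P$ as the disjoint union of its finest blocks $(B_\tau)_{\tau\in\mathcal T}$ --- each a subtree of $P$ of rank $\alpha_0$ --- indexed by a tree $\mathcal T$ of rank $R$ carrying the $(l-1)$-level structure for $\varepsilon_1,\ldots,\varepsilon_l$, so that: (i) two comparable nodes of $P$ lie in a common block if and only if their $P$-separation is $0$; and (ii) if $s<t$ lie in distinct blocks $B_\sigma,B_\tau$ (so $\sigma<\tau$ in $\mathcal T$), then $\varsigma_P(s,t)=1+\varsigma_{\mathcal T}(\sigma,\tau)$. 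The crux is a stabilization step: by repeated use of the single-level case inside blocks together with a coloring argument along $\mathcal T$, pass to a subtree $P'\subseteq P$ of full rank $\alpha_0\cdot R$ that is again a disjoint union of finest blocks $(B'_\tau)_{\tau\in\mathcal T'}$, each of full rank $\alpha_0$, over a subtree $\mathcal T'\subseteq\mathcal T$ of full rank $R$ with $\varsigma_{\mathcal T'}=\varsigma_{\mathcal T}$ on $\Lambda_2(\mathcal T')$, in such a way that $f$ takes a single value $c_0$ on every within-block pair of $P'$ and there is a coloring $g\colon\Lambda_2(\mathcal T')\to k+1$ with $f(s,t)=g(\sigma,\tau)$ whenever $s\in B'_\sigma$, $t\in B'_\tau$, $\sigma<\tau$ in $\mathcal T'$, and $s<t$.

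Granting the stabilization step, the inductive step finishes cleanly. Apply the inductive hypothesis to the tree $\mathcal T'$ and the coloring $g$ to get a subtree $\mathcal U\subseteq\mathcal T'$ with $\mathrm{rank}(\mathcal U)=R$ and a function $G\colon l\to k+1$ such that $\varsigma_{\mathcal U}=\varsigma_{\mathcal T'}$ on $\Lambda_2(\mathcal U)$ and $g(\sigma,\tau)=G(\varsigma_{\mathcal U}(\sigma,\tau))$ there. Set $Q=\bigcup_{\tau\in\mathcal U}B'_\tau$: this is a subtree of $P'$, it is a disjoint union of full-rank $\alpha_0$-blocks over the rank-$R$ tree $\mathcal U$, so $\mathrm{rank}(Q)=\alpha_0\cdot R=\mathrm{rank}(P)$, and its block structure is inherited from that of $P$, whence $\varsigma_Q=\varsigma_P$ on $\Lambda_2(Q)$. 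Define $F\colon l+1\to k+1$ by $F(0)=c_0$ and $F(i)=G(i-1)$ for $1\leqslant i\leqslant l$. A within-block pair $(s,t)\in\Lambda_2(Q)$ has $\varsigma_Q(s,t)=0$ and $f(s,t)=c_0=F(0)$; a cross-block pair with $s\in B'_\sigma$, $t\in B'_\tau$, $\sigma<\tau$ in $\mathcal U$, has $\varsigma_Q(s,t)=1+\varsigma_{\mathcal U}(\sigma,\tau)$ and $f(s,t)=g(\sigma,\tau)=G(\varsigma_{\mathcal U}(\sigma,\tau))=F(1+\varsigma_{\mathcal U}(\sigma,\tau))=F(\varsigma_Q(s,t))$.

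The principal obstacle is the stabilization step (and, beneath it, the successor step of the single-level case). Its delicacy is one of coordination: the blocks must be shrunk and the index tree $\mathcal T$ pruned \emph{simultaneously and compatibly}, so that the surviving shrunken blocks still glue into a genuine full-rank subtree of $P$ --- in particular the ``corridor'' joining a surviving block to each of its surviving successor blocks must lie in the chosen sub-block --- and so that one value $c_0$ and one quotient coloring $g$ serve all surviving blocks at once. Making this precise requires a careful recursion on $\mathcal T$ in which, at each node, one simultaneously prunes the block there and the collection of its successor blocks, playing the finitely many possible local colorings off against the rank still available.
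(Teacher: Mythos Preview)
Your outline has the right shape, but there is a structural circularity and an under-specified construction that together are genuine gaps.

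The circularity is in your proof of the $l=0$ case. You plan to establish it first, by induction on $\varepsilon$, and only afterwards run the induction on $l$. But at the successor step $\varepsilon=\delta+1$ you invoke ``the theorem applied inside arbitrarily deep finite portions, each of which is a finite product of rank-$\omega^{\omega^\delta}$ factors, hence covered by the already-known instances.'' A product $(\omega^{\omega^\delta})^n$ with $n\geqslant 2$ has $\lambda=n>1$; it is \emph{not} an instance of the $l=0$ statement, it needs the full theorem. So the $l=0$ induction cannot be closed before the general case is available. The paper abandons the two-stage plan and runs a single transfinite induction on $\xi$ with $\gamma=\omega^\xi$ (Theorem~\ref{big show}), proving tractability of every additively indecomposable $\gamma$ at once; the successor step then legitimately appeals to tractability of $(\omega^{\omega^\delta})^n$, which has strictly smaller exponent $\xi$.

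The second gap is the quotient tree $\mathcal T$. You assert that the $\alpha_0$-blocks of $P$ are indexed by a tree of rank $R=\alpha_1\cdots\alpha_l$ carrying the $(l-1)$-level separation, with $\varsigma_P(s,t)=1+\varsigma_{\mathcal T}(\sigma,\tau)$ for cross-block pairs. But the $\alpha_0$-blocks are the sets $P^{\alpha_0\cdot\delta}\setminus P^{\alpha_0\cdot(\delta+1)}$ indexed by \emph{ordinals} $\delta<R$; neither these nor their connected components carry a tree order of rank $R$ in any evident way (for instance $P^{\alpha_0}$ still has rank $\alpha_0\cdot R$, not $R$, since $\alpha_0+\alpha_0\cdot R=\alpha_0\cdot R$). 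Even granting some such $\mathcal T$, the stabilization step---shrinking blocks and pruning $\mathcal T$ compatibly so that cross-block colours depend only on the block pair---is the heart of the matter, and your sketch does not say how to do it. The paper never constructs a quotient: it peels off the \emph{last} (smallest) factor $\omega^{\omega^{\varepsilon_l}}$ rather than the first, so the ``outer'' structure is itself a single factor. The root-branch decomposition (Lemma~\ref{grow3}) then yields genuine subtrees of $P$ of strictly smaller rank to which the inductive hypothesis applies directly, cross-level colours are stabilized via finite Ramsey (Corollary~\ref{CD}), within-level colours via tractability of the inner $\gamma$ (Lemma~\ref{medieval times}), and contractions (Corollary~\ref{henderson}) supply the monochromatic sub-pieces needed in the successor case.
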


The last inequality is the aforementioned fact that, on the subtree $Q$, the color a pair $(s,t)$ receives is completely determined by the separation $\varsigma_Q$, which depends on the relative positions of $s$ and $t$ in the tree $Q$. The property $\varsigma_Q=\varsigma_P$ means, in a sense that we make precise later,  that the pairs $(s,t)\in \Lambda_2(Q)$ have the same relative positions in $Q$ as they do in $P$.

We will also prove the sharpness of the preceding  result.

\begin{theorem} Fix  $k,l<\omega$. \begin{enumerate}[(i)]\item Suppose $\ee_0\geqslant \ldots \geqslant \ee_l$, $P$ is a tree with $\text{\emph{rank}}(P)= \omega^{\omega^{\ee_0}}\cdot\ldots \cdot\omega^{\omega^{\ee_l}}$, and $f:\Lambda_2(P)\to k+1$ is a function.  If $Q\subset P$ and $F:l+1\to k+1$ are as in Theorem \ref{main1}, $j\leqslant k$,  $A=\{i\leqslant l: F(i)=j\}$, and $\alpha =\omega^{\omega^{\ee_0}\cdot 1_A(0)}\cdot \ldots \cdot\omega^{\omega^{\ee_l}\cdot 1_A(l)},$ then there exists a subtree $R$ of $P$ with $\text{\emph{rank}}(R)=\alpha$  such that $f|_{\Lambda_2(R)}\equiv j$.   Here we obey the convention that if $A=\varnothing$, $\alpha=1$.     \item Let $F:l+1\to k+1$ be a function.   Fix  $\ee_0\geqslant \ldots \geqslant \ee_l$ and a tree $P$ with $\text{\emph{rank}}(P)=\omega^{\omega^{\ee_0}}\cdot\ldots \cdot\omega^{\omega^{\ee_l}}$. Define $f:\Lambda_2(P)\to k+1$ by $f(s,t)=F(\varsigma_P(s,t)).$  If $j\leqslant k$, $A=\{i\leqslant l: F(i)=j\}$, $\alpha=\omega^{\omega^{\ee_0}\cdot 1_A(0)}\cdot \ldots \cdot\omega^{\omega^{\ee_l}\cdot 1_A(l)},$ and $Q$ is any subtree of $P$ such that $f|_{\Lambda_2(Q)}\equiv j$, then $\text{\emph{rank}}(Q) \leqslant \alpha$. \end{enumerate}
\label{slop}

\end{theorem}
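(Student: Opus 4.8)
The plan is to treat both halves by induction on $l$, leaning on the tree-of-trees decomposition already developed for trees of rank $\omega^{\omega^{\ee_0}}\cdot\ldots\cdot\omega^{\omega^{\ee_l}}$, on the multiplicativity of rank for such decompositions (if $\phi\colon Q\to T$ is monotone with every fibre of rank $\leqslant\beta$ then $\text{rank}(Q)\leqslant\beta\cdot\text{rank}(T)$, in this order of the product, and a clean $T$-shaped tree of clean $\beta$-trees has rank exactly $\beta\cdot\text{rank}(T)$), and on the elementary fact that the rank of a well-founded forest is the supremum of the ranks of its components.

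For $(i)$: since $Q\subset P$ and $\varsigma_Q$ agrees with $\varsigma_P$ on $\Lambda_2(Q)$, for every $R\subset Q$ we have $f(s,t)=F(\varsigma_Q(s,t))=F(\varsigma_P(s,t))$ on $\Lambda_2(R)$, so it suffices to produce $R\subset Q$ with $\text{rank}(R)=\alpha$ and $\varsigma_Q(s,t)\in A$ for all $(s,t)\in\Lambda_2(R)$; then $f|_{\Lambda_2(R)}\equiv j$. Since $\text{rank}(Q)=\text{rank}(P)$ is the maximal value for its type, I first pass to a clean subtree realizing the full depth-$(l+1)$ tree-of-trees structure of $Q$: a tree $T$ of rank $\omega^{\omega^{\ee_l}}$ whose nodes are pairwise disjoint subtrees $Q_\sigma$, each of rank exactly $\omega^{\omega^{\ee_0}}\cdot\ldots\cdot\omega^{\omega^{\ee_{l-1}}}$ and itself clean, with separations computed inside this structure still equal to $\varsigma_Q$. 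Now build $R$ by recursion on $l$: if $l\in A$, keep all of $T$ and inside each $Q_\sigma$ recursively choose a subtree $R_\sigma$ realizing $A\cap\{0,\ldots,l-1\}$ — the separations occurring inside $R_\sigma$ are $<l$ and lie in $A$, while any pair split across two distinct $Q_\sigma$ has separation exactly $l\in A$ — and assemble the $R_\sigma$ over $T$; if $l\notin A$, descend into one $Q_\sigma$ (which has the correct rank by cleanness) and recurse there, so that no pair ever has separation $l$. The base case $l=0$ is: take all of $Q$ if $0\in A$, a single node otherwise. The recursion produces rank exactly $\alpha':=\omega^{\omega^{\ee_0}\cdot 1_A(0)}\cdot\ldots\cdot\omega^{\omega^{\ee_{l-1}}\cdot 1_A(l-1)}$ at the previous stage, so multiplicativity gives $\text{rank}(R)=\alpha'\cdot\omega^{\omega^{\ee_l}}=\alpha$ when $l\in A$ and $\text{rank}(R)=\alpha'=\alpha$ when $l\notin A$.

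For $(ii)$ I would prove the slightly more flexible assertion: if $P$ carries a depth-$(l+1)$ tree-of-trees structure of type $(\ee_0\geqslant\ldots\geqslant\ee_l)$ and $Q\subset P$ satisfies $\varsigma_P(s,t)\in A$ for all $(s,t)\in\Lambda_2(Q)$, then $\text{rank}(Q)\leqslant\alpha$ (the hypothesis of $(ii)$ is the case of maximal rank of $P$). Induct on $l$; write $\pi\colon P\to T$ for the top-level collapse, so $\text{rank}(T)\leqslant\omega^{\omega^{\ee_l}}$, each fibre $P_\sigma$ is a structure of type $(\ee_0,\ldots,\ee_{l-1})$, and $\varsigma_{P_\sigma}$ agrees with $\varsigma_P$ there. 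If $l\in A$: every comparable pair of $Q\cap P_\sigma$ has separation $<l$, hence in $A\cap\{0,\ldots,l-1\}$, so by induction $\text{rank}(Q\cap P_\sigma)\leqslant\alpha'$ with $\alpha'$ as above; since $\pi|_Q$ is monotone with fibres of rank $\leqslant\alpha'$, multiplicativity yields $\text{rank}(Q)\leqslant\alpha'\cdot\text{rank}(T)\leqslant\alpha'\cdot\omega^{\omega^{\ee_l}}=\alpha$. If $l\notin A$: separation $l$ is precisely the relation $\pi(s)\neq\pi(t)$, so no two comparable elements of $Q$ lie in distinct fibres; hence each connected component of $Q$ lies in a single $P_\sigma$, where by induction it has rank $\leqslant\alpha'=\alpha$, and $\text{rank}(Q)$ is the supremum of these, hence $\leqslant\alpha$. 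The base case $l=0$ is immediate: $A=\{0\}$ gives $\text{rank}(Q)\leqslant\text{rank}(P)\leqslant\omega^{\omega^{\ee_0}}=\alpha$, while $A=\varnothing$ forces $Q$ to be an antichain, so $\text{rank}(Q)\leqslant 1=\alpha$.

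The inductions are routine; the real content lies in the two structural inputs. For $(i)$ the delicate step is extracting a clean sub-decomposition whose fibres have exactly the prescribed rank: without this, a full-rank $Q$ need only have fibre ranks cofinal in $\omega^{\omega^{\ee_0}}\cdot\ldots\cdot\omega^{\omega^{\ee_{l-1}}}$, none necessarily attaining it, which would break the ``descend into one fibre'' step; but this normalization already underlies the definition of $\varsigma$ and the proof of Theorem \ref{main1}, so it is available. For $(ii)$ the delicate step is the rank-multiplicativity lemma together with the forest-rank fact, both expected to be in the preliminaries; one also needs the compatibility bookkeeping that $\varsigma_P$ restricts correctly to the fibres $P_\sigma$ and that the subtree $Q$ furnished by Theorem \ref{main1} carries a matching structure, which is exactly what the clause $\varsigma_Q=\varsigma_P$ provides.
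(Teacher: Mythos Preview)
Your approach is essentially the paper's: part (i) is exactly the existence of $A$-contractions (packaged as Corollary \ref{henderson}, proved via Lemma \ref{proto} and Corollary \ref{ju} by your induction on $l$ with the same $l\in A$ / $l\notin A$ split), and part (ii) is Lemma \ref{drain}, again by induction on $l$ with those two cases and the same ``$Q^{\beta\cdot\delta}\subset P^{\gamma'\cdot\delta}$'' argument underlying your multiplicativity lemma. One simplification: your worry about ``cleanness'' is unnecessary, since the fibres are the derivative slices $Q^{\gamma'\cdot\delta}\setminus Q^{\gamma'\cdot(\delta+1)}$ with $\gamma'=\omega^{\omega^{\ee_0}}\cdots\omega^{\omega^{\ee_{l-1}}}$, and Proposition \ref{ted}(x) guarantees each nonempty slice has rank exactly $\gamma'$, so the ``descend into one fibre'' step needs no preprocessing.
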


 In what follows, for $1\leqslant n< \omega$, let $\mathfrak{R}_n$ denote the class of all $\xi\in \textbf{Ord}$ such that $\xi>0$ and  if $P$ is any tree with $\text{rank}(P)=\xi$, $k< \omega$, and $f:\Lambda_n(P)\to k+1$ is any function, then there exists $Q\subset P$ with $\text{rank}(Q)=\text{rank}(P)$ such that $f|_{\Lambda_n(Q)}$ is constant.  The following Theorem \ref{main2} was proved in \cite{C}. Our proof of Theorem \ref{main1} gives a new proof of Theorem \ref{main2}, while providing new information not contained in the proof of Theorem \ref{main2} found in \cite{C}.  Our hope is that this new information will provide a new approach to the problem of finding  explicit descriptions of $\mathfrak{R}_3, \mathfrak{R}_4, \ldots$, which, to our knowledge, are unknown.

\begin{theorem} $\mathfrak{R}_2$ is the class of multiplicatively indecomposable ordinals.

\label{main2}
\end{theorem}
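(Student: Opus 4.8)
The plan is to derive Theorem \ref{main2} as a consequence of Theorems \ref{main1} and \ref{slop}, since the bulk of the combinatorial work is already packaged there. Recall that $\mathfrak{R}_2$ consists of the ordinals $\xi>0$ such that every finite coloring of $\Lambda_2(P)$, for any tree $P$ of rank $\xi$, admits a monochromatic subtree of rank $\xi$. First I would record the standard fact that an ordinal $\xi>0$ is multiplicatively indecomposable if and only if it has the form $\omega^{\omega^\ee}$ for some ordinal $\ee$; this is the Cantor normal form characterization and I would simply cite it or dispatch it in a sentence. So the theorem amounts to proving two inclusions: (a) every $\omega^{\omega^\ee}$ lies in $\mathfrak{R}_2$, and (b) no other ordinal lies in $\mathfrak{R}_2$.

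For inclusion (a), I would apply Theorem \ref{main1} in the special case $l=0$, so $\mathrm{rank}(P)=\omega^{\omega^{\ee_0}}$ with a single exponent $\ee_0=\ee$. Given $f:\Lambda_2(P)\to k+1$, Theorem \ref{main1} produces a subtree $Q$ with $\mathrm{rank}(Q)=\mathrm{rank}(P)=\omega^{\omega^\ee}$, a function $F:1\to k+1$, and the identity $F(\varsigma_Q(s,t))=f(s,t)$ for all $(s,t)\in\Lambda_2(Q)$. Since the domain of $F$ is the one-point set $\{0\}$ and $\varsigma_Q$ takes only the value $0$ on $\Lambda_2(Q)$ (there being only one "resolution" when $l=0$), $f$ is constantly equal to $F(0)$ on $\Lambda_2(Q)$. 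Hence $\omega^{\omega^\ee}\in\mathfrak{R}_2$. I should double-check against the formal definition of $\varsigma_P$ in Section 3 that in the $l=0$ case $\varsigma_P$ is indeed identically $0$; this is the one place where I must be careful that the convention in the general setup degenerates correctly.

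For inclusion (b), suppose $\xi>0$ is not multiplicatively indecomposable. Then $\xi=\eta\cdot\theta$ for some ordinals with $1<\eta,\theta<\xi$, and using Cantor normal form I can write $\xi=\omega^{\omega^{\ee_0}}\cdot\omega^{\omega^{\ee_1}}\cdots\omega^{\omega^{\ee_l}}$ with $l\geqslant 1$ and $\ee_0\geqslant\cdots\geqslant\ee_l$ — i.e. $\xi$ is a product of at least two multiplicatively indecomposable ordinals. Let $P$ be any tree of rank $\xi$. I would invoke Theorem \ref{slop}(ii) with a judiciously chosen $F:l+1\to k+1$: take $k=1$ and let $F$ be nonconstant, say $F(0)=0$ and $F(i)=1$ for $i\geqslant 1$, and set $f(s,t)=F(\varsigma_P(s,t))$. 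If some subtree $Q$ had $\mathrm{rank}(Q)=\xi$ with $f|_{\Lambda_2(Q)}$ constant, then either $f\equiv 0$ on $\Lambda_2(Q)$ or $f\equiv 1$. Applying Theorem \ref{slop}(ii) with $j=0$ gives $A=\{0\}$, $\alpha=\omega^{\omega^{\ee_0}}<\xi$ (since $l\geqslant 1$ and all $\ee_i$ contribute), so $\mathrm{rank}(Q)\leqslant\alpha<\xi$, a contradiction; applying it with $j=1$ gives $A=\{1,\dots,l\}$, $\alpha=\omega^{\omega^{\ee_1}}\cdots\omega^{\omega^{\ee_l}}<\xi$, again a contradiction. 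Hence no rank-$\xi$ monochromatic subtree exists, so $\xi\notin\mathfrak{R}_2$.

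The main obstacle, such as it is, is not depth but bookkeeping: I must make sure the degenerate $l=0$ instance of Theorem \ref{main1} genuinely says "$f$ is constant on $\Lambda_2(Q)$" (this hinges entirely on the Section 3 definition of $\varsigma$ collapsing to the zero function), and in part (b) I must verify the elementary ordinal arithmetic that any $\xi$ which is a product of $\geqslant 2$ multiplicatively indecomposable ordinals admits the required Cantor-normal-form factorization with weakly decreasing exponents, and that deleting any nonempty proper sub-collection of the factors strictly decreases the product. Both of these are routine, so the proof is essentially a clean corollary extraction; the only subtlety is stating it without re-deriving the machinery.
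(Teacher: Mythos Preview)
Your inclusion (a) is fine for the ordinals $\omega^{\omega^\ee}$, but you have forgotten that the multiplicatively indecomposable ordinals also include $1$ and $2$ (see the paper's definition in Section~3). These are trivial to handle directly---$\Lambda_2(P)=\varnothing$ when $\text{rank}(P)=1$, and any two comparable nodes form a rank-$2$ subtree when $\text{rank}(P)=2$---but your characterization ``$\xi$ is multiplicatively indecomposable iff $\xi=\omega^{\omega^\ee}$'' is simply false as stated.

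The more serious problem is in inclusion (b). You assert that any $\xi$ which fails to be multiplicatively indecomposable can be written as $\omega^{\omega^{\ee_0}}\cdots\omega^{\omega^{\ee_l}}$ with $l\geqslant 1$, and then you invoke Theorem~\ref{slop}(ii). But an ordinal admits such a product factorization if and only if it is additively indecomposable and at least $\omega$; Theorems~\ref{main1} and~\ref{slop} are stated only for such ranks. Your argument therefore says nothing about ordinals like $3$, $\omega+1$, or $\omega\cdot 2$, none of which are multiplicatively indecomposable and none of which have the required form. (Indeed, even your preliminary claim that $\xi=\eta\cdot\theta$ with $1<\eta,\theta<\xi$ already fails for $\xi=3$: multiplicative decomposability only gives $\alpha\cdot\beta\geqslant\xi$ for some $\alpha,\beta<\xi$, not equality.) The ``routine ordinal arithmetic'' you flag at the end is not routine---it is false.

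The paper handles (b) by a direct construction that works for \emph{every} multiplicatively decomposable $\delta$: given $\alpha,\beta<\delta$ with $\alpha\cdot\beta\geqslant\delta$, color a pair $(s,t)$ by whether $s,t$ lie in the same block $P^{\alpha\cdot\zeta}\setminus P^{\alpha\cdot(\zeta+1)}$, and argue that any monochromatic subtree has rank at most $\max\{\alpha,\beta\}$. Your Theorem~\ref{slop}(ii) route is a clean shortcut when $\xi$ happens to be additively indecomposable, but you still need this direct argument (or something equivalent) to cover the remaining ordinals.
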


\section{The basics}

Throughout this work, $\textbf{Ord}$ will denote the class of ordinals. We will assume a basic knowledge of ordinals. Each of the facts we state about ordinals can be found in \cite{M}.

For us, a \emph{tree} will be a partially ordered set $(P, \leqslant )$ such that for each $t\in P$, the ancestor set $$A_P[t]:=\{s\in P: s\leqslant t\}$$ is well-ordered.  Throughout, given a tree $P$, we will assume each subset of $P$ is endowed with the same order as $P$, and is therefore also a tree.  Given a tree $P$, we let $\text{Roots}(P)$ denote the set of minimal members of $P$. That is, the set of $t\in P$ such that $A_P[t]=\{t\}$.  Of course, we refer to the members of $\text{Roots}(P)$ as the \emph{roots} of $P$.     Given $t\in P$, we let $$P[t]=\{s\in P: t\leqslant s\}$$ and $$P(t)=\{s\in P: t<s\}.$$  For convenience, for any $t\notin P$, $P[t]=P(t)=\varnothing$. We let $\text{Leaves}(P)$ denote the set of maximal members of $P$.  That is, $\text{Leaves}(P)$ denotes the set of those $t\in P$ such that $P(t)=\varnothing$. If $t\in \text{Leaves}(P)$, we say $t$ is a \emph{leaf} of $P$. We define $\Pi(P)=\{(s,t)\in P\times \text{Leaves}(P): s\leqslant t\}$. 


Given a tree $P$, we let $P'=P\setminus \text{Leaves}(P)$. The tree $P'$ is called the \emph{derivative} of $P$.   We then define by transfinite induction the $\xi^{th}$ \emph{derivative} $P^\xi$ by $$P^0=P,$$ $$P^{\xi+1}=(P^\xi)',$$ and if $\xi$ is a limit ordinal, $$P^\xi=\bigcap_{\zeta<\xi}P^\zeta.$$  Of course, if $\zeta\leqslant \xi$, $P^\xi\subset P^\zeta$.   From this it follows that there must exist $\zeta\in \textbf{Ord}$ such that $P^\zeta=P^{\zeta+1}$, and evidently $P^\xi=P^\zeta$ for all $\xi\geqslant \zeta$.   We define $P^\infty$ to be equal to $P^\zeta$, where $\zeta$ is the minimum (equivalently, any) ordinal such that $P^\zeta=P^{\zeta+1}$.  We say $P$ is \emph{well-founded} if $P^\infty=\varnothing$, and \emph{ill-founded} otherwise.  

For a subset $M$ of $P$, the \emph{downward closure of} $M$ \emph{in} $P$ is the set $$\{s\in P: (\exists t\in M)(s\leqslant t)\}.$$  We say a subset $M$ of $P$ is \emph{downward closed in} $P$  if $M$ is equal to its downward closure in $P$.

\begin{proposition} Let $P$ be a tree. $P$ is ill-founded if and only if there exist $t_0<t_1<t_2<\ldots$ such that $t_n\in P$ for all $n<\omega$. In particular, if $P$ is well-founded and $s\in P$, then $P[s]\cap \text{\emph{Leaves}}(P)\neq \varnothing$ and $P$ is the downward closure of $\text{\emph{Leaves}}(P)$. 
\label{easy1}
\end{proposition}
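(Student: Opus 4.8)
The plan is to establish the biconditional first and then deduce the ``in particular'' clause directly from its forward direction. Both implications are elementary, the first by transfinite induction on the derivative index and the second by a recursive construction using the fact that $P^\infty$ is a fixed point of the derivative.

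For the implication that an infinite strictly increasing chain forces ill-foundedness, suppose $t_0 < t_1 < t_2 < \cdots$ all lie in $P$. I would show by transfinite induction on $\xi$ that $t_n \in P^\xi$ for every $n < \omega$. The base case $\xi = 0$ is immediate. At a successor $\xi+1$, the inductive hypothesis gives $t_n, t_{n+1} \in P^\xi$ with $t_{n+1} > t_n$, so $t_n$ is not maximal in $P^\xi$; hence $t_n \notin \text{Leaves}(P^\xi)$ and therefore $t_n \in (P^\xi)' = P^{\xi+1}$. At a limit $\xi$ we get $t_n \in \bigcap_{\zeta<\xi} P^\zeta = P^\xi$. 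Taking $\xi$ past the stabilization ordinal of $P$ yields $t_n \in P^\infty$, so $P^\infty \neq \varnothing$ and $P$ is ill-founded.

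For the converse, if $P$ is ill-founded then $P^\infty \neq \varnothing$, and by definition $P^\infty = P^\zeta = P^{\zeta+1} = (P^\infty)'$, so $P^\infty$ has no maximal element. Pick $t_0 \in P^\infty$; given $t_n \in P^\infty$, it is not a leaf of $P^\infty$, so there is $t_{n+1} \in P^\infty$ with $t_{n+1} > t_n$. This recursion (an appeal to dependent choice) produces the desired chain $t_0 < t_1 < t_2 < \cdots$ in $P$.

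For the ``in particular'' clause, let $P$ be well-founded and $s \in P$, and suppose toward a contradiction that $P[s] \cap \text{Leaves}(P) = \varnothing$. Then no element of $P[s]$ is a leaf of $P$; in particular $s$ is not a leaf, so there is $s_1 > s$, and $s_1 \in P[s]$ since $s \leqslant s_1$. Iterating — at stage $n$, $s_n \in P[s]$ is not a leaf of $P$, so choose $s_{n+1} \in P$ with $s_{n+1} > s_n$, and then $s \leqslant s_n < s_{n+1}$ gives $s_{n+1} \in P[s]$ — produces an infinite chain $s = s_0 < s_1 < s_2 < \cdots$ in $P$, contradicting well-foundedness via the direction just proved. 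Hence $P[s] \cap \text{Leaves}(P) \neq \varnothing$, and any element of this set is a leaf above $s$, so $s$ lies in the downward closure of $\text{Leaves}(P)$. Since $s \in P$ was arbitrary and the reverse inclusion is trivial, $P$ equals the downward closure of its leaf set. The whole argument is essentially bookkeeping; the only points requiring a little care are the two recursive chain constructions, which invoke dependent choice, and keeping the roles of $\text{Leaves}(P^\xi)$ straight across the successor and limit stages of the induction.
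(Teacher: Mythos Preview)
Your proof is correct and follows essentially the same approach as the paper's: transfinite induction on $\xi$ to show an infinite chain survives every derivative, a recursive construction inside $P^\infty$ to build a chain from ill-foundedness, and then the contrapositive argument for the ``in particular'' clause. The only differences are cosmetic---you present the two implications in the opposite order and you explicitly flag the use of dependent choice, which the paper leaves tacit.
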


\begin{proof} If $P$ is ill-founded, then there exists an ordinal $\xi$ such that $\varnothing\neq P^\infty=P^\xi=P^{\xi+1}$.  Since $P^\xi\neq \varnothing$, we may fix $t_0\in P^\xi$.    Now assume that $t_0<\ldots <t_n\in P^\xi$ have been chosen.  Since $t_n\in P^\xi=P^{\xi+1}$, $t_n$ is not a leaf in $P^\xi$, whence there exists $t_{n+1}\in P^\xi$ such that $t_n<t_{n+1}$. This completes the recursive process of choosing $t_0<t_1<\ldots$ such that $t_n\in P^\xi\subset P$ for all $n<\omega$.  Conversely, suppose there exist $t_0<t_1<\ldots$ such that $t_n\in P$ for all $n<\omega$.  We claim that for each $n<\omega$ and each ordinal $\xi$,  $t_n\in P^\xi$ and $t_n$ is not a leaf in $P^\xi$.    This is an easy induction on $\xi$. Thus $t_0\in P^\infty$, and $P$ is ill-founded.

If $P[s]\cap \text{Leaves}(P)=\varnothing$, then let $s_0=s\in P[s]$.  Now suppose that $s_0<\ldots<s_n$ have been chosen. Since $s=s_0<\ldots <s_n$, $s_n\in P[s]$, and $s_n\in P\setminus \text{Leaves}(P)$, there exists $s_{n+1}\in P$ such that $s_n<s_{n+1}$.  Note that $s_{n+1}\in P[s]$.  This completes the recursive choice of $s_0<s_1<\ldots$ with $s_n\in P$ for all $n<\omega$. We deduce that $P$ is ill-founded by the previous paragraph. By contraposition, if $P$ is well-founded and $s\in P$, $P[s]\cap \text{Leaves}(P)\neq \varnothing$. 

For the final statement, suppose $P$ is well-founded and fix $s\in P$. Since $P[s]\cap \text{Leaves}(P)\neq \varnothing$, we may fix a leaf $t\in P[s]$. Then since $s\leqslant t$, $s$ lies in the downward closure of $\text{Leaves}(P)$ in $P$. Since $s$ was arbitrary, $P$ is contained in, and therefore equal to, the downard closure of $\text{Leaves}(P)$ in $P$.

\end{proof}

\begin{rem}\upshape By our definition, a partially ordered set $(P, \leqslant )$ is a tree if for any $t\in P$, $A_P[t]$ is well-ordered.  However, if for any $t\in P$, $A_P[t]$ is infinite, then it has an initial segment order isomorphic to $\omega$. In this case, $P$ is ill-founded.  Therefore in all cases of interest to us in this work, our trees will have the stronger property that for every $t\in P$, $A_P[t]$ is finite and linearly ordered.   We will often use this fact throughout without repeatedly recalling it.

\end{rem}

Another way of stating the first equivalence in Proposition \ref{easy1} is that a tree $P$ is ill-founded if and only if it has a subset order isomorphic to $\omega$.  From this it it easy to see that the following corollary is equivalent to Theorem \ref{Ramsey}$(i)$.   This equivalence yields that stabilization questions are already completely solved in the ill-founded case, which is the  reason we only concern ourselves with the well-founded case in this work.

\begin{corollary} For any $n,k< \omega$, any ill-founded tree $P$, and $f:\Lambda_{n+1}(P)\to k+1$, there exists an ill-founded subtree $Q$ of $P$ such that $f|_{\Lambda_{n+1}(Q)}$ is constant.

\end{corollary}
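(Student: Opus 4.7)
The plan is to reduce this corollary to the infinite Ramsey theorem (Theorem \ref{Ramsey}(i)) via the characterization of ill-foundedness given in Proposition \ref{easy1}.

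First, since $P$ is ill-founded, Proposition \ref{easy1} yields a strictly increasing sequence $t_0<t_1<t_2<\ldots$ in $P$. Set $C=\{t_i:i<\omega\}$; this is a subset of $P$, which (by the convention that every subset inherits the order) is a subtree, and the map $i\mapsto t_i$ is an order isomorphism from $\omega$ onto $C$.

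Next, transfer the coloring through this isomorphism: define $\tilde f:[\omega]^{n+1}\to k+1$ by
\[
\tilde f(\{i_0<i_1<\ldots<i_n\})=f(t_{i_0},t_{i_1},\ldots,t_{i_n}).
\]
This is well-defined because $i_0<\ldots<i_n$ forces $t_{i_0}<\ldots<t_{i_n}$ in $P$, so the tuple lies in $\Lambda_{n+1}(P)$. Apply Theorem \ref{Ramsey}(i) to obtain a cofinal subset $M\subset\omega$ on which $\tilde f$ is constant with some value $c\leqslant k$.

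Finally, set $Q=\{t_i:i\in M\}\subset P$. Since $M$ is cofinal in $\omega$, $Q$ contains a strictly increasing sequence of length $\omega$, so Proposition \ref{easy1} again gives that $Q$ is ill-founded. Any element of $\Lambda_{n+1}(Q)$ has the form $(t_{i_0},\ldots,t_{i_n})$ with $\{i_0<\ldots<i_n\}\in[M]^{n+1}$, so $f|_{\Lambda_{n+1}(Q)}\equiv c$. There is no real obstacle here; the only point to be careful about is verifying that the tree order on $P$ restricted to the chain $C$ genuinely matches the natural order on $\omega$ under the identification $i\mapsto t_i$, which is immediate from the construction of the $t_i$'s in Proposition \ref{easy1}.
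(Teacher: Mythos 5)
Your argument is correct and is precisely the fleshed-out version of the one-line hint the paper gives (the paper merely observes that ill-founded trees contain a copy of $\omega$ and asserts the corollary is then equivalent to Theorem \ref{Ramsey}$(i)$, without writing out the reduction). Extracting the $\omega$-chain via Proposition \ref{easy1}, transferring the coloring to $[\omega]^{n+1}$, applying classical Ramsey, and pulling the cofinal homogeneous set back to an ill-founded subchain is exactly the intended route, and every step checks out.
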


For each tree $P$, we let $\text{rank}(P)=\{\zeta:P^\zeta\neq \varnothing\}$, which is a downward closed class of ordinals.    Then $P$ is well-founded if and only if $\text{rank}(P)$ is an ordinal, and in this case $\text{rank}(P)=\min\{\xi: P^\xi=\varnothing\}$.  Moreover, $P$ is ill-founded if and only if $\text{rank}(P)=\textbf{Ord}$. It is obvious that if $Q\subset P$, then $Q^\xi\subset P^\xi$ for all $\xi\in \textbf{Ord}$, whence $\text{rank}(Q)\leqslant \text{rank}(P)$.

We next provide some simple examples to show that trees of all orders exist. 

\begin{example}\upshape  For two ordinals $\alpha, \beta$, let $$I(\alpha, \beta)=\{(\xi_0, \ldots, \xi_n): n< \omega, \beta> \xi_0>\ldots >\xi_n\geqslant \alpha\}.$$   We define the order on $I(\alpha, \beta)$ by letting $(\zeta_0, \ldots, \zeta_m)\leqslant (\xi_0, \ldots, \xi_n)$ if $m\leqslant n$ and $\zeta_i=\xi_i$ for all $i\leqslant m$. Note that $I(\alpha, \beta)=\varnothing$ if and only if $\alpha\geqslant \beta$.  It is evident that $$\text{Leaves}(I(\alpha, \beta))=\{(\xi_0, \ldots, \xi_n)\in I(\alpha, \beta): \xi_n=\alpha\},$$ and $I(\alpha, \beta)'=I(\alpha+1, \beta)$.    This is the successor step of an easy induction argument which shows that  for any ordinal $\zeta$, $I(\alpha, \beta)^\zeta=I(\alpha+\zeta, \beta).$  Therefore for any ordinal $\xi$, $\text{rank}(I(0,\xi))=\xi$.

\end{example}

For a well-founded tree $P$ and $t\in P$, we let $\tau_P(t)=\sup\{\xi: t\in P^\xi\}$.  Note that for a fixed $t\in P$, $\{\xi: t\in P^\xi\}$ is a downward closed class of ordinals, and $\tau_P(t)\leqslant \text{rank}(P)$.  Furthermore, the supremum in the definition of $\tau_P(t)$ is a maximum.  To see this, note that if the supremum $\tau_P(t)$ is zero or a successor, it is obviously a maximum. If the supremum $\tau_P(t)$ is a limit ordinal, then since $\{\xi: t\in P^\xi\}$ is downard closed, $$t\in \bigcap_{\zeta<\tau_P(t)} P^\zeta=P^{\tau_P(t)}.$$      Since $\tau_P(t)$ is a maximum, it follows that $\tau_P(t)< \text{rank}(P)$ for all $t\in P$.  For later purposes, for an ordinal $\beta>0$, a tree $P$, and $t\in P$, we also define $$\tau_{P,\beta}(t)=\sup \{\xi: t\in P^{\beta\cdot \xi}\}.$$   It is easy to see that this supremum is also a maximum.

If $P$ is a tree, $M$ is a set, and for each $t\in M$, $P_t\subset P$ is given, we use the notation $\amalg_{t\in M}P_t$  to refer to the tree $\cup_{t\in M}P_t$ as well as the assertion that this union is a totally incomparable union.  That is, for distinct $s,t\in M$ and $a\in P_s$ and $b\in P_t$, $a\not\leqslant b$ and $b\not\leqslant a$.  For any tree, $$P=\amalg_{t\in \text{Roots}(P)} P[t].$$

\begin{proposition} Let $P$ be a tree. \begin{enumerate}[(i)] \item If $\xi$ is a limit ordinal and $M$ is a cofinal subset of $\xi$, then $$\bigcap_{\zeta\in M}P^\zeta =P^\xi.$$

\item For any ordinal $\zeta$, $P^\zeta$ is downward closed in $P$. 

\item For any ordinals $\beta, \gamma$, $(P^\beta)^\gamma=P^{\beta+\gamma}$. In particular, if $P^\beta\neq \varnothing$, then  $\text{\emph{rank}}(P)=\beta+\gamma$ if and only if $\text{\emph{rank}}(P^\beta)=\gamma$.  

\item If $P$ is well-founded and $s,t\in P$ with $s<t$, then $\tau_P(s)>\tau_P(t)$.   

\item For ordinals $\beta, \gamma$, $(P\setminus P^\beta)^\zeta=P^\zeta\setminus P^\beta$.   

\item If $\amalg_{t\in M}P_t\subset P$, then for any ordinal $\zeta$, $$\bigl(\amalg_{t\in M} P_t\bigr)^\zeta= \amalg_{t\in M}P_t^\zeta.$$ 

\item For any $s\in P$, $P(s)\cap \text{\emph{Leaves}}(P)= \text{\emph{Leaves}}(P(s))$ and  $P[s]\cap \text{\emph{Leaves}}(P)= \text{\emph{Leaves}}(P[s])$.     

\item For $t\in P$ and any ordinal $\zeta$, $P^\zeta(t)=P(t)^\zeta$ and  $P^\zeta[t]=P[t]^\zeta$. This implies that if $P$ is well-founded, $\tau_{P(s)}=\tau_P|_{P(s)}$, and    $t\in P^\zeta$ if and only if $\tau_P(t)\geqslant \zeta$ if and only if $\text{\emph{rank}}(P(t))\geqslant \zeta$ if and only if $\text{\emph{rank}}(P[t])>\zeta$. 

\item For an ordinal $\zeta$, $t\in \text{\emph{Leaves}}(P^\zeta)$ if and only if $\text{\emph{rank}}(P(t))=\zeta$ if and only if $\tau_P(t)=\zeta$. 

\item If $P$ is well-founded and $\text{\emph{rank}}(P)\geqslant \beta$, then $\text{\emph{rank}}(P\setminus P^\beta)=\beta$.  Moreover, if $\beta, \gamma$ are ordinals and $t\in P^{\beta+\gamma}$, then  for any ordinal $\zeta$, $[P(t)\cap (P^\beta\setminus P^{\beta+\gamma})]^\zeta=P(t)\cap (P^\beta\setminus P^{\beta+\gamma})^\zeta$ and $\text{\emph{rank}}(P(t)\cap (P^\beta\setminus P^{\beta+\gamma}))=\gamma$.

\item If $\zeta$ is an ordinal,  $R\subset P$ is a non-empty, well-founded subtree of $P$,  and for each $t\in \text{\emph{Leaves}}(R)$, there exists $R_t\subset P(t)$ such that $\text{\emph{rank}}(R_t)=\zeta$, then for each $\mu\leqslant \zeta$ and each ordinal $\eta$,  $$\Bigl(R\cup \bigl(\amalg_{t\in \text{\emph{Leaves}}(R)} R_t\bigr)\Bigr)^\mu= R\cup \bigl(\amalg_{t\in \text{\emph{Leaves}}(R)} R_t^\mu\bigr),$$ and $$\Bigl(R\cup \bigl(\amalg_{t\in \text{\emph{Leaves}}(R)} R_t\bigr)\Bigr)^{\zeta+\eta}= R^\eta. $$ In particular, $$\text{\emph{rank}}\Bigl(R\cup \bigl(\amalg_{t\in \text{\emph{Leaves}}(R)} R_t\bigr)\Bigr)=\zeta+\text{\emph{rank}}(R).$$

\end{enumerate}

\label{ted}
\end{proposition}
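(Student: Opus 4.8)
The eleven assertions are all facts about the Cantor--Bendixson-type derivative, and I would prove them in the stated order, using throughout the same device: transfinite induction on the superscript, with limit stages handled by item (i) and each successor stage reduced to an explicit description of the leaves of the tree built so far. The three purely structural items come first, with no induction. Item (i): $P^\xi=\bigcap_{\zeta<\xi}P^\zeta\subseteq\bigcap_{\zeta\in M}P^\zeta$, and conversely each $\zeta<\xi$ lies below a member of the cofinal set $M$, so $\bigcap_{\zeta\in M}P^\zeta\subseteq P^\zeta$. Item (vii) is a direct unravelling of the definitions: a maximal element of $P(s)$ (resp.\ $P[s]$) is exactly a maximal element of $P$ that is $>s$ (resp.\ $\geqslant s$), since anything above such an element already lies in $P(s)$ (resp.\ $P[s]$). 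Item (vi) is the observation that in a totally incomparable union the leaves are the union of the leaves of the pieces, carried through all derivatives by a one-line induction (limits by (i)).

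Next I would do (ii)--(iv), (viii), (ix) by routine induction, each bootstrapped from the above. For (ii), $P^{\zeta+1}=P^\zeta\setminus\text{Leaves}(P^\zeta)$ is downward closed because $s<t\in P^\zeta$ forces $s\in P^\zeta$ (induction) and $s\notin\text{Leaves}(P^\zeta)$; limits are intersections of downward closed sets. For (iii), the successor step is the definition and the limit step is exactly (i) for the cofinal set $\{\beta+\delta:\delta<\gamma\}$ of $\beta+\gamma$, with the ``in particular'' clause matching the conditions ``$P^{\beta+\gamma}=\varnothing$ and $P^{\beta+\delta}\neq\varnothing$ for all $\delta<\gamma$''. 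Item (iv) is immediate from (ii): if $\gamma=\tau_P(t)$ then $t\in P^\gamma$, so $s\in P^\gamma$, and $s<t$ makes $s$ a non-leaf of $P^\gamma$, hence $s\in P^{\gamma+1}$. For (viii) and (ix): from (vii), $(P[t])'=P[t]\setminus\text{Leaves}(P)=P'[t]$, so induction (limits by (i)) gives $P^\zeta[t]=P[t]^\zeta$ and $P^\zeta(t)=P(t)^\zeta$; then $P^\zeta[t]\neq\varnothing$ iff $t\in P^\zeta$ (downward closure), and $P^\zeta(t)\neq\varnothing$ iff $t$ is a non-leaf of $P^\zeta$ iff $t\in P^{\zeta+1}$, which gives the equivalences of (viii) and, reading off where the maximum $\tau_P(t)$ occurs, (ix).

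Item (v) requires one real computation. For $\zeta\geqslant\beta$ both sides vanish, so assume $\zeta<\beta$; with induction hypothesis $(P\setminus P^\beta)^\zeta=P^\zeta\setminus P^\beta$, the successor step rests on $\text{Leaves}(P^\zeta\setminus P^\beta)=\text{Leaves}(P^\zeta)$: a leaf of $P^\zeta$ has $\tau_P$-value $\zeta<\beta$ by (ix), so it lies in and stays maximal in $P^\zeta\setminus P^\beta$; a non-leaf $t$ of $P^\zeta$ outside $P^\beta$ has, by Proposition~\ref{easy1} applied below $t$ in the well-founded tree $P[t]$ (well-founded since $t\notin P^\infty\subseteq P^\beta$), a leaf $u>t$ of $P^\zeta$, and $\tau_P(u)=\zeta<\beta$ keeps $u$ in $P^\zeta\setminus P^\beta$, so $t$ is not a leaf there; limits distribute $\bigcap$ past the fixed set $P^\beta$. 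Item (x) then follows: $(P\setminus P^\beta)^\zeta=P^\zeta\setminus P^\beta$ is nonempty for $\zeta<\beta$ (else $P^\zeta=P^\beta=P^\infty=\varnothing$, against $\text{rank}(P)\geqslant\beta$) and empty at $\zeta=\beta$, giving $\text{rank}(P\setminus P^\beta)=\beta$; and for the ``moreover'', rewrite $P(t)\cap(P^\beta\setminus P^{\beta+\gamma})$ as $P(t)^\beta\setminus(P(t)^\beta)^\gamma$ using $P(t)\cap P^\alpha=P(t)^\alpha$ from (viii), apply (v) inside $P(t)^\beta$ together with (iii), and use $t\in P^{\beta+\gamma}\Rightarrow\text{rank}(P(t))\geqslant\beta+\gamma$ to see the relevant set differences are nonempty below index $\gamma$.

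Finally (xi): put $S=R\cup(\amalg_{t\in\text{Leaves}(R)}R_t)$. First one checks $S$ really is a totally incomparable union hung on the leaves of $R$: distinct leaves of $R$ are incomparable and each $R_t\subseteq P(t)$, so any comparability among points of distinct $R_s,R_t$, or between a point of $R_t$ and a point of $R$, would force two distinct leaves of $R$ into a common ancestor set, hence comparable --- a contradiction; this also yields $R\cap R_t=\varnothing$ and finiteness of the ancestor sets of $S$. The crux is the claim that, for $\nu<\zeta$ (so that every $R_t^\nu\neq\varnothing$), $$\text{Leaves}\bigl(R\cup(\amalg_t R_t^\nu)\bigr)=\bigcup_t\text{Leaves}(R_t^\nu),$$ since a non-leaf of $R$ has a successor in $R$, a leaf $t$ of $R$ has a successor in the nonempty $R_t^\nu$, and each point of $R_t^\nu$ has all of its $S$-successors inside $R_t^\nu$. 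A transfinite induction on $\mu\leqslant\zeta$ --- successor step by this leaf identity, limit step by (i) together with distributivity of $\bigcap$ over $R$ and over the disjoint union --- proves the first displayed identity, which at $\mu=\zeta$ reads $S^\zeta=R$ (as $\text{rank}(R_t)=\zeta$ forces $R_t^\zeta=\varnothing$); the second is then $S^{\zeta+\eta}=(S^\zeta)^\eta=R^\eta$ by (iii), and the rank formula follows since $S^\xi\supseteq R\neq\varnothing$ for $\xi<\zeta$ while $R^\eta=\varnothing$ exactly when $\eta\geqslant\text{rank}(R)$. The only genuinely delicate point throughout is this leaf bookkeeping --- the identity for $\text{Leaves}(P^\zeta\setminus P^\beta)$ in (v) and, above all, the identification of $\text{Leaves}(R\cup\amalg_t R_t^\nu)$ in (xi) --- because one must track exactly which points of $R$ are kept from becoming leaves by the surviving stumps $R_t^\nu$, which is precisely why the statement restricts to $\mu\leqslant\zeta$ and requires all the $R_t$ to have the \emph{same} rank.
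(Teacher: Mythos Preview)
Your proof is correct and follows the same overall scheme as the paper: transfinite induction on the derivative index, with limit stages handled by (i). The ordering you chose (doing (viii) and (ix) before (v)) is fine and causes no circularity.

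There are two places where your tactics differ from the paper's. In (v), you identify $\text{Leaves}(P^\zeta\setminus P^\beta)=\text{Leaves}(P^\zeta)$ for $\zeta<\beta$, which requires finding a leaf of $P^\zeta$ above any non-leaf $t\notin P^\beta$; this works because $t\notin P^\infty$ forces $P^\zeta[t]$ to be well-founded, but note that you do not actually need (ix) here---a leaf of $P^\zeta$ lies outside $P^{\zeta+1}\supseteq P^\beta$ directly. The paper instead proves the successor step more simply: if $t\in P^{\mu+1}\setminus P^\beta$, pick any $s>t$ in $P^\mu$; downward closure of $P^\beta$ together with $t\notin P^\beta$ forces $s\notin P^\beta$, so $s\in (P\setminus P^\beta)^\mu$ by induction and $t\in(P\setminus P^\beta)^{\mu+1}$. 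This avoids the leaf-hunting entirely. In (xi), you do explicit leaf bookkeeping at each stage, while the paper instead observes that for $s\in R_t$ one has $Q(s)=R_t(s)$ and hence $\tau_Q(s)=\tau_{R_t}(s)$, reading the derivative levels off directly via (viii). Both routes are sound; yours is perhaps more elementary, the paper's slightly more streamlined.

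One small expository wrinkle: in your discussion of (xi) you write that comparability ``between a point of $R_t$ and a point of $R$'' would be contradictory, but of course elements of $R$ below $t$ are comparable with all of $R_t$. What you need (and what you correctly state next) is just $R\cap R_t=\varnothing$ and the incomparability of distinct $R_s,R_t$; the argument you give establishes exactly that.
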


\begin{proof}$(i)$ Obviously $\cap_{\zeta\in M}P^\zeta\supset \cap_{\zeta<\xi}P^\zeta=P^\xi$. If $t\in P\setminus P^\xi$, then there exists $\eta<\xi$ such that $t\notin P^\eta$. Since $M$ is cofinal in $\xi$, there exists $\mu\in  M$ such that $\eta<\mu<\xi$. Since $\cap_{\zeta\in M} P^\zeta\subset P^\mu\subset P^\eta$ and $t\notin P^\eta$, $t\notin \cap_{\zeta\in M}P^\zeta$. This yields $(i)$.

$(ii)$ It is  a trivial induction on $\xi$ that if $s,t\in P$ and $s<t\in P^\xi$, then $s\in P^\xi$.

$(iii)$ The first statement can be seen by induction on $\gamma$ for $\beta$ held fixed. The $\gamma=0$ case is clear. The successor case follows by the definition of $P^\xi$ when $\xi$ is a successor together with associativity of ordinal addition. The limit ordinal case uses $(i)$ together with the fact that for a limit ordinal $\gamma$ and any ordinal $\beta$, $\beta+\gamma$ is a limit ordinal and $\{\beta+\delta: \delta<\gamma\}$ is a cofinal subset of $\beta+\gamma$.

Suppose that $\text{rank}(P)=\beta+\gamma$.  Then for any ordinal $\mu$, $(P^\beta)^\mu=P^{\beta+\mu}$ is empty if and only if $\beta+\mu\geqslant \beta+\gamma$ if and only if $\mu\geqslant \gamma$, so that $\text{rank}(P^\beta)=\gamma$.        Next, suppose that $P^\beta \neq \varnothing$ and  $\text{rank}(P^\beta)=\gamma$.  Then $P^{\beta+\gamma}=(P^\beta)^\gamma=\varnothing$, so $\text{rank}(P)\leqslant \beta+\gamma$.    But for $\mu<\gamma$, $\beta+\mu<\beta+\gamma$, and $P^{\beta+\mu}=(P^\beta)^\mu\neq \varnothing$, so $\text{rank}(P)> \beta+\mu$. Since this holds for any $\mu<\gamma$, and since the set of such $\mu$ is non-empty because $P^\beta=P^{\beta+0}\neq \varnothing$, we deduce that $\text{rank}(P)\geqslant \beta+\gamma$.

$(iv)$ We have already discussed that $\tau_P(t)$ is an ordinal and $t\in P^{\tau_P(t)}$. By $(ii)$,  $s<t\in P^{\tau_P(t)}$, which yields $s\in P^{\tau_P(t)+1}$. From this it follows that $\tau_P(s)>\tau_P(t)$.

$(v)$ It is obvious that $(P\setminus P^\beta)^\zeta\subset P^\zeta \cap (P\setminus P^\beta)= P^\zeta\setminus P^\beta$.    We prove by induction on $\zeta$ that $P^\zeta\setminus P^\beta\subset (P\setminus P^\beta)^\zeta$. By the first sentence of $(v)$, this is equivalent to proving that $P^\zeta\setminus P^\beta= (P\setminus P^\beta)^\zeta$.      For $\zeta=0$, this is clear.   If $\zeta$ is a limit ordinal, \begin{align*} P^\zeta \setminus P^\beta & = \Bigl(\bigcap_{\mu<\zeta}P^\mu\Bigr)\setminus P^\beta =\bigcap_{\mu<\zeta} (P^\mu\setminus P^\beta) = \bigcap_{\mu<\zeta}(P\setminus P^\beta)^\mu= (P\setminus P^\beta)^\zeta.  \end{align*}  Now suppose $P^\mu\setminus P^\beta\subset (P\setminus B^\beta)^\mu$. If $P^{\mu+1}\setminus P^\beta=\varnothing$, then obviously this is contained in $(P\setminus P^\beta)^{\mu+1}$. Now if $t\in P^{\mu+1}\setminus P^\beta$, then $t\in P^{\mu+1}$ and $t\notin P^\beta$. Fix $t<s\in P^\mu$ and note that, since $t\notin P^\beta$ and $P^\beta$ is downward closed in $P$, $s\notin P^\beta$.  Therefore $s\in P^\mu\setminus P^\beta=(P\setminus P^\beta)^\mu$. Since $t<s\in (P\setminus P^\beta)^\mu$, $t\in (P\setminus P^\beta)^{\mu+1}$.

$(vi)$  First we note that if $\{P_t: t\in M\}$ are pairwise incomparable, then so are $\{P_t^\zeta: t\in M\}$ for any ordinal $\zeta$.  Also, for any $s\in M$, $P_s^\zeta\subset (\amalg_{t\in M}P_t)^\zeta$ for any ordinal $\zeta$. Then $\amalg_{s\in M}P_s^\zeta\subset (\amalg_{t\in M}P_t)^\zeta$.    We prove the reverse inclusion by induction on $\zeta$.    The $\zeta=0$ is clear, and so is the limit case once we observe that intersection commutes with incomparable union. That is, if $\{P_t: t\in M\}$ are pairwise incomparable, $$\amalg_{t\in M} \bigcap_{\mu<\zeta}P_t^\mu=\bigcap_{\mu<\zeta}\amalg_{t\in M}P_t^\mu.$$   Indeed, $s\in \amalg_{t\in M}\cap_{\mu<\zeta}P^\mu_t$ if and only if there exists $t\in M$ such that $s\in \cap_{\mu<\zeta}P^\mu_t$, while $s\in \cap_{\mu<\zeta}\amalg_{t\in M}P^\mu_t$ if and only if for each $\mu<\zeta$, there exists $t_\mu\in M$ such that $s\in P^\mu_{t_\mu}$. The first condition implies the second by taking $t_\mu=t$ for all $\mu<\zeta$. The second condition implies the first, since if $s\in \amalg_{t\in M}P_t$, there exists unique $t\in M$ such that $s\in P_t$, and it must be the case that $t_\mu=t$ for all $\mu<\zeta$.     For the successor case, it is sufficient to prove the $\zeta=1$ case.   We note that $s\in \amalg_{t\in M}P_t$ fails to be a leaf in $\amalg_{t\in M}P_t$ if and only if there exists $u\in \amalg_{t\in M}P_t$ such that $s<u$. But since this is an incomparable union, $s,u\in P_t$ for the same $t\in M$, whence $s$ also fails to be a leaf in $P_t$. Therefore $(\amalg_{t\in M}P_t)'\subset \amalg_{t\in M}P_t'$.

$(vii)$ If $u\in P(s)\cap \text{Leaves}(P)$, there cannot exist $t\in P(s)$ such that $u<t$, otherwise $u\in P'$ and $u\notin \text{Leaves}(P)$. Since $u\in P(s)$ and there cannot exist $t\in P(s)$ such that $u<t$, $u\in \text{Leaves}(P(s))$.   If $u\in \text{Leaves}(P(s))\subset P(s)$, there cannot exist $t\in P$ such that $u<t$, otherwise $s<u<t$, and $t\in P(s)$. But this contradicts the maximality of $u$ in $P(s)$.  Therefore $u\in P(s)\cap \text{Leaves}(P)$.

$(viii)$ We prove the first statement by induction on $\zeta$.    The $\zeta=0$ case is clear. For the successor case, it is sufficient to prove the $\zeta=1$ case.   For this, we note that $s\in P'(t)$ if and only if $t<s$ and there exists $u\in P$ such that $s<u$ if and only if $t<s$ and there exists $u\in P(t)$ such that $s<u$ if and only if $s\in P(t)'$.   For the limit ordinal case, we note that $s\in P^\zeta(t)$ if and only if $t<s$ and $s\in P^\mu$ for all $\mu<\zeta$ if and only if for all $\mu<\zeta$, $t<s$ and $s\in P^\mu$ if and only if $s\in \cap_{\mu<\zeta}P^\mu(t)=\cap_{\mu<\zeta}(P(t))^\mu=P(t)^\zeta$.    The proof for $P[t]$ is similar, replacing $t<s$ with $t\leqslant s$.       

For the second statement, suppose that $P$ is well-founded.   For $t\in P(s)$ and an ordinal $\zeta$, $t\in P(s)^\zeta$ if and only if $t\in P^\zeta(s)$ if and only if $t\in P^\zeta$. Thus $\tau_P(t)=\max \{\zeta: t\in P^\zeta\}=\max\{\zeta: t\in P(s)^\zeta\}=\tau_{P(s)}(t)$.  

Still supposing $P$ is well-founded and $t\in P$, $t\in P^\zeta$ if and only if $\tau_P(t)\geqslant \zeta$ from the definition of $\tau_P$ and the downward closedness of $\{\mu: t\in P^\mu\}$.     Suppose $\tau_P(t) \geqslant \zeta$.    Then for any $\mu<\zeta$, $t\notin \text{Leaves}(P^\mu)$, otherwise $t\notin (P^\mu)'=P^{\mu+1}\supset P^\zeta$ and $\tau_P(t)<\zeta$.    Then any member of $P^\mu \cap P(t)$ lies in $P^\mu(t)$, whence $P^\mu(t)\neq \varnothing$.    Since this holds for any $\mu<\zeta$,  $\text{rank}(P(t))\geqslant \zeta$.  Next suppose that $\text{rank}(P(t))\geqslant \zeta$.    We claim that for any $\mu\leqslant \text{rank}(P(t))$,  $\{t\}\cup P(t)^\mu= P[t]^\mu$. We prove this by induction on $\mu$.    We prove the containments separately. We first show that $P[t]^\mu\subset \{t\}\cup P(t)^\mu$ for all $\mu$.  First, if $s\in P[t]^\mu$, then either $s=t\in \{t\}\cup P(t)^\mu$, or $t<s\in P[t]^\mu=P^\mu[t]$. In the latter case, $t<s$ and $s\in P^\mu$, whence $s\in P^\mu(t)=P(t)^\mu$.     We next prove the containment $\{t\}\cup P(t)^\mu\subset P[t]^\mu$ for all $\mu\leqslant \text{rank}(P(t))$.  It is clear that $P(t)^\mu\subset P[t]^\mu$ for all $\mu$.      We  show that $t\in P[t]^\mu$ for each $\mu\leqslant \text{rank}(P(t))$. If $\mu=\nu+1\leqslant \text{rank}(P(t))$, then $\nu<\mu\leqslant \text{rank}(P(t))$.   Therefore we may fix $s\in P(t)^\nu \subset P[t]^\nu$.     Since $t<s\in P[t]^\nu$, $t\in P[t]^{\nu+1}=P[t]^\mu$.   If $\mu\leqslant \text{rank}(P(t))$ is a limit ordinal, then for every $\nu<\mu\leqslant \text{rank}(P(t))$, $t\in P[t]^\nu$.    Intersecting over $\nu$ yields that $t\in P[t]^\mu$. This gives the claim on $\mu$.  Applying this with $\mu=\zeta$ yields that $t\in P[t]^\zeta\neq \varnothing$ and $\text{rank}(P[t])>\zeta$.   Now suppose that $\text{rank}(P[t])>\zeta$.    This implies that $P[t]^\zeta\neq \varnothing$. But since $P[t]^\zeta$ is downward closed in $P[t]$, $t\in P[t]^\zeta=P^\zeta[t]$. This yields that $t\in P^\zeta$.

$(ix)$ Suppose that $t\in \text{Leaves}(P^\zeta)$.  Then $P^\zeta(t)=\varnothing$, so $\text{rank}(P(t))\leqslant \zeta$. But for $\mu<\zeta$, $P(t)^\mu=P^\mu(t)\neq \varnothing$, otherwise the proof of $(viii)$ yields that $P^\mu[t]\subset \{t\}$ and $\varnothing =P^{\mu+1}[t]=P[t]^{\mu+1}$. In particular, $t\notin P^{\mu+1}\subset P^\zeta$, a contradiction of $t\in \text{Leaves}(P^\zeta)$.   So $P(t)^\mu\neq \varnothing$ for all $\mu<\zeta$, and $\text{rank}(P(t))=\zeta$.  Thus we have shown that if $t\in \text{Leaves}(P^\zeta)$, $\text{rank}(P(t))=\zeta$.     Now suppose that $\text{rank}(P(t))=\zeta$.  Then for every $\mu<\zeta$, $P(t)^\mu\neq \varnothing$, whence there exists $t<s\in P^\mu$.  Since $P^\mu$ is downward closed in $P$,  $t\in P^\mu$ for every $\mu<\zeta$, and $\tau_P(t)\geqslant \zeta$.   Since $P(t)^\zeta=\varnothing$, it must be the case that $t\notin P^{\zeta+1}$, otherwise there exists $t<s\in P^\zeta$, and $s\in P^\zeta(t)\neq \varnothing$.  This shows that if $\text{rank}(P(t))=\zeta$, then $\tau_P(t)=\zeta$.     Now suppose that $\tau_P(t)=\zeta$. Then $t\in P^\zeta$. Furthermore, if $\tau_P(t)=\zeta$, then $t\in \text{Leaves}(P^\zeta)$, otherwise $t\in (P^\zeta)'=P^{\zeta+1}$ and $\tau_P(t)>\zeta$.   This yields that $t\in \text{Leaves}(P^\zeta)$ if $\tau_P(t)=\zeta$.

$(x)$ We know from $(v)$ that $(P\setminus P^\beta)^\beta\subset P^\beta\setminus P^\beta=\varnothing$, so $\text{rank}(P\setminus P^\beta)\leqslant \beta$. But for any $\mu<\beta$, by well-foundedness, there exists $t\in \text{Leaves}(P^\mu)$. From this it follows that $t\notin P^\beta$, and $t\in P^\mu\setminus P^\beta$. Therefore $(P\setminus P^\beta)^\mu=P^\mu\setminus P^\beta \neq \varnothing$. Since this holds for any $\mu<\beta$, $\text{rank}(P\setminus P^\beta)\geqslant \beta$. This gives the first statement.  Now suppose that $t\in P^{\beta+\gamma}=(P^\beta)^\gamma$. Let $Q=P(t)^\beta=P^\beta(t)$.  Then $\text{rank}(Q)\geqslant \gamma$ by $(viii)$ applied to $P^\beta(t)$.    Now applying the first part of $(x)$, $\text{rank}(Q\setminus Q^\gamma)=\gamma$.  Next note  that $$P(t)\cap (P^\beta\setminus P^{\beta+\gamma})= Q\setminus Q^\gamma.$$   To see this, note that $s\in P(t)\cap (P^\beta\setminus P^{\beta+\gamma})$ if and only if $t<s$ and $s\in P^\beta$ and $s\notin P^{\beta+\gamma}$, and $s\in Q\setminus Q^\gamma$ if and only if $s\in Q=P^\beta(t)$ and $s\notin Q^\gamma=P^{\beta+\gamma}(t)$.  Indeed, if  $t<s$, $s\in P^\beta$, and $s\notin P^{\beta+\gamma}$, then the first two conditions imply that $s\in P^\beta(t)=Q$, while $t<s$ and $s\notin P^{\beta+\gamma}$ imply that $s\notin P^{\beta+\gamma}(t)=Q^\gamma$.  Thus $s\in P(t)\cap (P^\beta \setminus P^{\beta+\gamma})$ implies that $s\in Q\setminus Q^\gamma$.   Now if $s\in Q= P^\beta(t)$ and $s\notin Q^\gamma=P^{\beta+\gamma}(t)$, then the first condition implies $t<s$ and $s\in P^\beta$. The conditions $t<s$ and $s\notin P^{\beta+\gamma}(t)$ imply that $s\notin P^{\beta+\gamma}$. Therefore $s\in Q\setminus Q^\gamma$ implies that $s\in P(t)\cap (P^\beta\setminus P^{\beta+\gamma})$.  Now since $$P(t)\cap (P^\beta\setminus P^{\beta+\gamma})=Q\setminus Q^\gamma$$ and $\text{rank}(Q\setminus Q^\gamma)=\gamma$, we deduce that $$\text{rank}(P(t)\cap (P^\beta\setminus P^{\beta+\gamma}))=\gamma.$$  Next,  it follows from $(v)$ that for any ordinal $\zeta$, $$[P(t)\cap (P^\beta\setminus P^{\beta+\gamma})]^\zeta = (Q\setminus Q^\gamma)^\zeta = Q^\zeta \setminus Q^\gamma.$$   To finish $(x)$, it suffices to show that $$P^{\beta+\zeta}(t)\setminus P^{\beta+\gamma}(t)=Q^\zeta\setminus Q^\gamma=P(t)\cap (P^{\beta+\zeta}\setminus P^{\beta+\gamma}).$$   The first equality follows from $(iii)$ and $(viii)$.   For the second equality, we argue as above to deduce that $s\in Q^\zeta\setminus Q^\gamma$ if and only if $t<s$, $s\in P^{\beta+\zeta}$, and $s\notin P^{\beta+\gamma}$, if and only if $s\in P(t)\cap (P^{\beta+\zeta}\setminus P^{\beta+\gamma})$.

$(xi)$  Let $Q=R\cup \Bigl(\amalg_{t\in \text{Leaves}(R)} R_t\Bigr)$.  For $t\in \text{Leaves}(R)$, $R_t= Q(t)$, so $\text{rank}(Q(t))= \text{rank}(R_t)=\zeta$ and $t\in Q^\zeta$ by $(viii)$. This yields that $\text{Leaves}(R)\subset Q^\zeta$.  Now since $R$ is well-founded and $Q^\zeta$ is downward closed in $Q$,   the downward closure of $\text{Leaves}(R)$ in $Q$ is $R$ and is contained in $Q^\zeta$.  Therefore $$R\subset Q^\zeta \subset \bigcap_{\mu<\zeta}Q^\mu.$$   For  $s\in Q\setminus R$, there exists unique $t\in \text{Leaves}(R)$ such that $s\in R_t$. Since $R_t(s)=Q(s)$, $\tau_{R_t}(s)=\tau_Q(s)$, so that for any $\mu\leqslant \zeta$,  $s\in Q^\mu$ if and only if $s\in R_t^\mu$.   From this it follows that $$Q^\mu=R\cup \Bigl(\amalg_{t\in \text{Leaves}(R)} R_t^\mu\Bigr)$$ for all $0\leqslant \mu\leqslant \zeta$.   Therefore $Q^\zeta=R$ and for any $\eta$, $Q^{\zeta+\eta}=(Q^\zeta)^\eta=R^\eta$.

The second statement follows easily from $(iii)$ and  the last sentence of the previous paragraph.

\end{proof}

\section{Warming up}

We say a non-zero ordinal $\gamma$ is \emph{additively indecomposable} if  there do not exist $\alpha, \beta<\gamma$ such that $\alpha+\beta\geqslant \gamma$. This is known to be equivalent to the condition that $\alpha+\gamma=\gamma$ for all $\alpha<\gamma$, and to the condition that there exists an ordinal $\xi$ such that $\gamma=\omega^\xi$.   We say a non-zero ordinal $\delta$ is \emph{multiplicatively indecomposable} if there do not exist $\alpha, \beta<\gamma$ such that $\alpha\cdot \beta\geqslant \gamma$. This is known to be equivalent to the condition that $\alpha\cdot \gamma=\gamma$ for all $0<\alpha<\gamma$, and to the condition that $\gamma$ is either $1$, $2$, or $\omega^{\omega^\xi}$ for some ordinal $\xi$.

We recall that any non-zero ordinal $\xi$ has a \emph{Cantor normal form}, which is the unique representation $$\xi=\omega^{\alpha_0}\cdot n_0+\ldots +\omega^{\alpha_l}\cdot n_l,$$ where $l< \omega$,  $0<n_i< \omega$, and $\alpha_0>\ldots >\alpha_l$.  By replacing $\omega^\alpha\cdot n$ with $\omega^\alpha+\ldots +\omega^\alpha$, where the summand is repeated $n$ times,  we may uniquely represent any non-zero ordinal $\xi$ as $$\xi=\omega^{\ee_0}+\ldots +\omega^{\ee_l},$$ where $\ee_0\geqslant \ldots \geqslant \ee_l$.

Recall that $\mathfrak{R}_1$ is the class of non-zero ordinals $\xi$ such that for every tree $P$ with $\text{rank}(P)$, each $k<\omega$, and each function $f:\Lambda_1(P)\to k+1$ (equivalently, each function $f:P\to k+1)$, there exists a subtree $Q$ of $P$ with $\text{rank}(Q)=\text{rank}(P)$ such that  $f|_Q$ constant. Let us first prove that $\mathfrak{R}_1$ is a subclass of the additively indecomposable ordinals.    In showing that $\mathfrak{R}_1$ is a subclass of the additively indecomposable ordinals, we will demonstrate a relationship between sums and coloring the members of a tree.  Later, we shall see that this relationship is not coincidental. In fact, $\mathfrak{R}_1$ is precisely the class of additively indecomposable ordinals, and sums of indecomposable terms is the reason behind this.  Suppose that $0<\alpha, \beta<\gamma$ are such that $\gamma\leqslant \alpha+\beta$.   Let $P$ be a tree with $\text{rank}(P)=\gamma$.   Let $B=P^\alpha$ and let $A=P\setminus B=P\setminus P^\alpha$. Since $B^\beta=(P^\alpha)^\beta=P^{\alpha+\beta}=\varnothing$, $\text{rank}(B)\leqslant \beta<\gamma$.    By Proposition \ref{ted}$(x)$, $\text{rank}(A)=\alpha$.    Now define $f:P\to 2$ by letting $f=1_B=1-1_A$, where $1_A, 1_B$ denote the indicator functions of $A,B$, respectively.    Now if $Q\subset P$ is such that $f|_Q\equiv 0$, $Q\subset A$ and $\text{rank}(Q)\leqslant \text{rank}(A)= \alpha$.    If $Q\subset P$ is such that $f|_Q\equiv 1$, then $Q\subset B$ and $\text{rank}(Q)\leqslant \text{rank}(B)\leqslant \beta$.   This yields that $\gamma\notin \mathfrak{R}_1$, with the obvious identification of $P$ with $\Lambda_1(P)$.    Note that we have shown something which is formally stronger than $\gamma\notin \mathfrak{R}_1$. In order for a non-zero ordinal not to lie in $\mathfrak{R}_1$, there must exist some tree $P$, some $k<\omega$, and $f:P\to k+1$ such that there does not exist a subtree $Q$ of $P$ with $\text{rank}(Q)=\text{rank}(P)$ and $f|_Q$ constant.  However, what we have shown is that if $\gamma>0$ fails to be additively indecomposable, then for any tree $P$ with $\text{rank}(P)=\gamma$, there exist such $k$ and $f$, and in fact we may take $k=1$.  Generalizing the above example, suppose $\gamma=\omega^{\ee_0}+\ldots +\omega^{\ee_l}$ with $\ee_0\geqslant \ldots \geqslant \ee_l$ and $l>0$.  Let $\gamma_0=0$ and $\gamma_{i+1}=\omega^{\ee_0}+\ldots +\omega^{\ee_i}$ for $i=0, \ldots, l$. For any tree $P$ with $\text{rank}(P)=\gamma$, we may partition  $P$  into the sets $P^{\gamma_i}\setminus P^{\gamma_{i+1}}$, $i=0, \ldots, l$.  We then define $f:P\to l+1$ by letting $f|_{P^{\gamma_i}\setminus P^{\gamma_{i+1}}}\equiv i$. Any subtree $Q$ on which $f$ is constant must be a subset of $P^{\gamma_i}\setminus P^{\gamma_{i+1}}$ for some $i\leqslant l$, and therefore $$\text{rank}(Q)\leqslant \text{rank}(P^{\gamma_i}\setminus P^{\gamma_{i+1}})=\omega^{\ee_i}.$$   We shall see that, up to passing to subtrees which are compatible with the original tree $P$ in a way that we make precise later, this is essentially the only obstruction to monochromatic subtrees in the case of coloring trees.

Next we discuss coloring pairs in trees.   We  will  give an easy proof that $\mathfrak{R}_2$ is a subclass of the multiplicatively indecomposable ordinals.  The majority of the remainder of this work is concerned with the reverse inclusion. Our example here shows that an ordinal being multiplicatively decomposable provides an obstruction to monochromaticity of pairs.   Our later arguments will show that this obstruction to monochromaticity is essentially the only one.    Suppose that $0<\alpha, \beta<\gamma$ are such that $\alpha\cdot \beta\geqslant \gamma$.  Fix a tree $P$ such that $\text{rank}(P)=\gamma$. Define $f:\Lambda_2(P)\to 2$ by letting $f(s,t)=0$ if there exists an ordinal $\delta$ such that $s,t\in P^{\alpha \cdot \delta}\setminus P^{\alpha \cdot (\delta+1)}$, and $f(s,t)=1$ otherwise.   Heuristically, we view $P$ as a tree of trees, where the inner trees are those of the form $P^{\alpha \cdot \delta}\setminus P^{\alpha \cdot (\delta+1)}$, $\delta<\beta$.   We let $f(s,t)=0$ if $s,t$ lie in the same inner tree, and $f(s,t)=1$ otherwise.    We claim that if $Q\subset P$ is such that $f|_{\Lambda_2(Q)}$ is constant, then $\text{rank}(Q)\leqslant \max\{\alpha, \beta\}$.  This will yield that $\gamma\notin \mathfrak{R}_2$.  First suppose that $Q\subset P$ is such that $f|_{\Lambda_2(Q)}\equiv 0$.   We claim that $$Q=\amalg_{\delta<\beta} Q\cap (P^{\alpha\cdot \delta}\setminus P^{\alpha\cdot (\delta+1)}).$$    We first note that since $\text{rank}(P)=\gamma\leqslant \alpha \cdot \beta$, $P=\cup_{\delta<\beta} P^{\alpha\cdot \delta}\setminus P^{\alpha \cdot (\delta+1)}$ and $$Q=\bigcup_{\delta<\beta} Q\cap (P^{\alpha \cdot \delta}\setminus P^{\alpha \cdot (\delta+1)}).$$  To see that this union is an incomparable one, we note that if $\delta\neq \eta$, $s<t$, $s\in P^{\alpha \cdot \delta}\setminus P^{\alpha\cdot (\delta+1)}$, and $t\in P^{\alpha \cdot\eta}\setminus P^{\alpha \cdot (\eta+1)}$, then $f(s,t)=1$. This means either $s\notin Q$ or $t\notin Q$, and shows that $$\bigcup_{\delta<\beta} Q\cap (P^{\alpha\cdot \delta}\setminus P^{\alpha\cdot (\delta+1)}) = \amalg_{\delta<\beta} Q\cap (P^{\alpha\cdot \delta}\setminus P^{\alpha\cdot (\delta+1)}) .$$  From this and Proposition \ref{ted}$(vi)$ it follows that $$\text{rank}(Q)=\sup_{\delta<\beta} \text{rank}(Q\cap (P^{\alpha\cdot \delta}\setminus P^{\alpha\cdot (\delta+1)})) \leqslant  \sup_{\delta<\beta}\text{rank}(P^{\alpha\cdot \delta}\setminus P^{\alpha\cdot (\delta+1)})=\alpha.$$  Now suppose that $Q\subset P$ and $f|_{\Lambda_2(Q)}\equiv 1$.   We claim that for any ordinal $\delta$, $Q^\delta\subset P^{\alpha \cdot \delta}$.  We prove this by induction on $\delta$.  The $\delta=0$ and $\delta$ a limit ordinal cases are clear.   Suppose $Q^\delta \subset P^{\alpha \cdot \delta}$. If $s\in Q^{\delta+1}\setminus P^{\alpha \cdot (\delta+1)}$, then we could find some $s<t\in Q^\delta\subset P^{\alpha \cdot \delta}$.   But since $P^{\alpha\cdot \delta}$ and $P^{\alpha\cdot (\delta+1)}$ are downward closed in $P$, $s<t\in P^{\alpha \cdot \delta}$ implies that $s\in P^{\alpha \cdot \delta}$, while $P^{\alpha \cdot (\delta+1)}\not\ni s<t$ implies that $t\notin P^{\alpha \cdot (\delta+1)}$.   Therefore $s,t\in P^{\alpha \cdot \delta}\setminus P^{\alpha \cdot (\delta+1)}$, and $f(s,t)=0$. But this contradicts our assumptions on $f$ and $Q$ and completes the induction proof.  Since $\text{rank}(P)\leqslant \alpha \cdot \beta$, $$Q^\beta\subset P^{\alpha\cdot \beta}=\varnothing,$$ and $\text{rank}(Q)\leqslant \beta$. This completes the proof that $\mathfrak{R}_2$ is a subclass of the class of multiplicatively indecomposable ordinals.

In this paragraph, for an additively indecomposable ordinal $\gamma$, we define $\lambda(\gamma)$, and for a tree $P$ with $\text{rank}(P)=\gamma$, we define the \emph{separation} $\varsigma_P:\Lambda_2(P)\to \lambda(\gamma)$.      Let $\gamma$ be an additively indecomposable ordinal, which means $\gamma=\omega^\xi$ for some ordinal $\xi$.  First, if $\xi=0$ and if $P$ is a tree with $\text{rank}(P)=\gamma=\omega^0=1$,  $\Lambda_2(P)=\varnothing$. Then we define $\lambda(\gamma)=0$ and we define the separation function $\varsigma_P:\Lambda_2(P) \to \lambda(\gamma)$ in the only possible way, which is to let $\varsigma_P$ be the empty function. Now suppose $0<\xi$.    We may write $\xi=\omega^{\ee_0}+\ldots +\omega^{\ee_l}$ with $\ee_0\geqslant \ldots \geqslant \ee_l$.   Then $$\gamma=\omega^{\omega^{\ee_0}}\cdot \ldots \cdot\omega^{\omega^{\ee_l}},$$ and this representation is unique.   We define $\lambda(\gamma)=l+1$. We also define $\alpha_0=\omega^{\omega^{\ee_0}}$ and $\alpha_i=\alpha_{i-1}\cdot \omega^{\omega^{\ee_i}}$ for $i=1, \ldots, l$.  If $P$ is a tree with $\text{rank}(P)=\gamma$, we  define $\varsigma_P:\Lambda_2(P)\to l+1$ by letting $\varsigma_P(s,t)$ be the smallest $i$ such that there exists an ordinal $\delta$ with $$s,t\in P^{\alpha_i\cdot \delta}\setminus P^{\alpha_i\cdot (\delta+1)}.$$   Since $\gamma=\alpha_l$, $P=P^{\alpha_l\cdot 0}\setminus P^{\alpha_l\cdot 1}$, and such an $i\leqslant l$ exists. Let us also note that $\varsigma_P$ admits an alternative description: $$\varsigma_P(s,t)=\min\{i\leqslant l: \tau_{P, \alpha_i}(s)=\tau_{P, \alpha_i}(t)\}.$$  Here we recall that $\tau_{P, \alpha_i}(t)=\max \{\delta: t\in P^{\alpha_i\cdot \delta}\}$.  We give a third description of $\varsigma_P$: Let $\varrho_i=\omega^{\omega^{\ee_{i+1}}}\cdot \ldots \cdot \omega^{\omega^{\ee_l}}$, where $\varrho_l=1$ by convention.  Then $\alpha_i\cdot \varrho_i=\gamma$ for each $i\leqslant l$.    For each $i\leqslant l$, we partition $P$ into $(P^{\alpha_i\cdot \delta}\setminus P^{\alpha_i\cdot (\delta+1)})_{\delta<\varrho_i}$ and define the equivalence relation $\approx_i$ on $P$ by letting $s\approx_i t$ if $s$ and $t$ lie in the same member of the partition $(P^{\alpha_i\cdot \delta}\setminus P^{\alpha_i\cdot (\delta+1)})_{\delta<\varrho_i}$.   Then for $(s,t)\in \Lambda_2(P)$, we let $$\varsigma_P(s,t)=\min\{i\leqslant l: s\approx_i t\}.$$   This set is non-empty, since for $i=l$, the partition is the trivial partition of $P$ into one set.

We collect the following easy fact about the separation function, which is a consequence of left distributivity of ordinal multiplication.

\begin{proposition} Fix $0\leqslant l<m<\omega$ and $\ee_0\geqslant \ldots \geqslant \ee_m$. Let $\delta=\omega^{\omega^{\ee_0}}\cdot \ldots \cdot \omega^{\omega^{\ee_m}}$ and $\gamma=\omega^{\omega^{\ee_0}}\cdot \ldots \cdot \omega^{\omega^{\ee_l}}$.  If $P$ is a tree with rank $\delta$, then for any $\eta< \omega^{\omega^{\ee_{l+1}}}\cdot \ldots \cdot \omega^{\omega^{\ee_m}}$, $$\varsigma_{P^{\gamma\cdot \eta}\setminus P^{\gamma\cdot (\eta+1)}}= \varsigma_P|_{\Lambda_2(P^{\gamma\cdot \eta}\setminus P^{\gamma\cdot (\eta+1)})}.$$

\label{division}
\end{proposition}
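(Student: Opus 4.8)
The plan is to recognize $R:=P^{\gamma\cdot\eta}\setminus P^{\gamma\cdot(\eta+1)}$ as a subtree of rank exactly $\gamma$ whose iterated derivatives are ``slices'' of the corresponding derivatives of $P$, and then to compare the two separation functions through the quantities $\tau_{\cdot,\alpha_i}$.

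First I would set $\varrho=\omega^{\omega^{\ee_{l+1}}}\cdot\ldots\cdot\omega^{\omega^{\ee_m}}$, so that $\delta=\gamma\cdot\varrho$ by associativity, and fix $\eta<\varrho$. Writing $\varrho=\eta+\theta$ with $\theta>0$, left distributivity gives $\delta=\gamma\cdot\eta+\gamma\cdot\theta$ with $\gamma\cdot\theta\geqslant\gamma$; hence $\text{rank}(P^{\gamma\cdot\eta})=\gamma\cdot\theta\geqslant\gamma$ by Proposition \ref{ted}$(iii)$, and since $R=P^{\gamma\cdot\eta}\setminus(P^{\gamma\cdot\eta})^\gamma$, Proposition \ref{ted}$(x)$ gives $\text{rank}(R)=\gamma$. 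Thus $\varsigma_R$ is defined, and because $\gamma=\omega^{\omega^{\ee_0}}\cdot\ldots\cdot\omega^{\omega^{\ee_l}}$ is the (unique) representation governing $\varsigma_R$, the ordinals $\alpha_0<\ldots<\alpha_l$ relevant to $\varsigma_R$ are exactly the first $l+1$ of the ordinals relevant to $\varsigma_P$. Moreover, for $(s,t)\in\Lambda_2(R)$ we have $s,t\in R\subseteq P^{\alpha_l\cdot\eta}\setminus P^{\alpha_l\cdot(\eta+1)}$, so $\varsigma_P(s,t)\leqslant l$; in particular the minimum defining $\varsigma_P(s,t)$ may be taken over $i\leqslant l$.

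The main step is the slicing identity $R^\zeta=P^{\gamma\cdot\eta+\zeta}\setminus P^{\gamma\cdot(\eta+1)}$, valid for every ordinal $\zeta$, which comes from Proposition \ref{ted}$(v)$ applied to the tree $P^{\gamma\cdot\eta}$ with $\beta=\gamma$, together with Proposition \ref{ted}$(iii)$. Fix $i\leqslant l$ and write $\gamma=\alpha_i\cdot\kappa_i$, where $\kappa_i=\omega^{\omega^{\ee_{i+1}}}\cdot\ldots\cdot\omega^{\omega^{\ee_l}}$. By left distributivity and associativity, $\gamma\cdot\eta+\alpha_i\cdot\xi=\alpha_i\cdot(\kappa_i\cdot\eta+\xi)$ and $\gamma\cdot(\eta+1)=\alpha_i\cdot(\kappa_i\cdot\eta+\kappa_i)$, so the slicing identity reads $R^{\alpha_i\cdot\xi}=P^{\alpha_i\cdot(\kappa_i\cdot\eta+\xi)}\setminus P^{\alpha_i\cdot(\kappa_i\cdot\eta+\kappa_i)}$. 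For $s\in R$ one has $s\notin P^{\gamma\cdot(\eta+1)}$, so $s\in R^{\alpha_i\cdot\xi}$ iff $s\in P^{\alpha_i\cdot(\kappa_i\cdot\eta+\xi)}$, i.e. iff $\tau_{P,\alpha_i}(s)\geqslant\kappa_i\cdot\eta+\xi$; and since $s\in P^{\gamma\cdot\eta}$ and $s\notin P^{\gamma\cdot(\eta+1)}$ force $\kappa_i\cdot\eta\leqslant\tau_{P,\alpha_i}(s)<\kappa_i\cdot\eta+\kappa_i$, we conclude $\tau_{P,\alpha_i}(s)=\kappa_i\cdot\eta+\tau_{R,\alpha_i}(s)$ with $\tau_{R,\alpha_i}(s)<\kappa_i$. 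Left cancellation of ordinal addition then gives, for every $(s,t)\in\Lambda_2(R)$, that $\tau_{R,\alpha_i}(s)=\tau_{R,\alpha_i}(t)$ if and only if $\tau_{P,\alpha_i}(s)=\tau_{P,\alpha_i}(t)$. Taking the minimum over $i\leqslant l$ and invoking the alternative description $\varsigma(s,t)=\min\{i:\tau_{\cdot,\alpha_i}(s)=\tau_{\cdot,\alpha_i}(t)\}$ of the separation (for $\varsigma_P$ over $\Lambda_2(R)$ this minimum may be restricted to $i\leqslant l$ by the previous paragraph) yields $\varsigma_R(s,t)=\varsigma_P(s,t)$, as desired.

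I expect the only real difficulty to be organizing the ordinal arithmetic: keeping track of the factorization $\gamma=\alpha_i\cdot\kappa_i$, performing the left-distributive rewrites $\gamma\cdot\eta+\alpha_i\cdot\xi=\alpha_i\cdot(\kappa_i\cdot\eta+\xi)$ in the right order, and checking the confinement bound $\kappa_i\cdot\eta\leqslant\tau_{P,\alpha_i}(s)<\kappa_i\cdot\eta+\kappa_i$ for $s\in R$. As the statement already advertises, nothing beyond left distributivity of ordinal multiplication (and the slicing identity, which is immediate from Proposition \ref{ted}) is needed.
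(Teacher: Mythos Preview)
Your proof is correct and essentially identical to the paper's own argument: both establish $\text{rank}(R)=\gamma$, observe that $\varsigma_P(s,t)\leqslant l$ on $\Lambda_2(R)$, and then prove the key identity $\tau_{P,\alpha_i}(s)=\kappa_i\cdot\eta+\tau_{R,\alpha_i}(s)$ (the paper writes $\varrho_i$ for your $\kappa_i$ and $Q$ for your $R$) via the slicing $R^{\alpha_i\cdot\xi}=P^{\alpha_i\cdot(\kappa_i\cdot\eta+\xi)}\setminus P^{\gamma\cdot(\eta+1)}$, from which equality of the separation functions follows by left cancellation. Your write-up is if anything slightly more explicit about invoking Proposition~\ref{ted}$(v)$ and about the confinement bound, but there is no substantive difference.
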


\begin{proof} Fix $\eta<\omega^{\omega^{\ee_{l+1}}}\cdot \ldots \cdot \omega^{\omega^{\ee_m}}$ and let $Q=P^{\gamma\cdot \eta}\setminus P^{\gamma\cdot (\eta+1)}$, so $\text{rank}(Q)=\gamma$.   For $0\leqslant i\leqslant m$, let $\alpha_i=\omega^{\omega^{\ee_0}}\cdot \ldots \cdot \omega^{\omega^{\ee_i}}$.   Let us recall that for $(s,t)\in \Lambda_2(Q)$ and $i\leqslant l$, $$\tau_{P, \alpha_i}(s)=\max\{\beta: s\in P^{\alpha_i\cdot \beta}\},$$ $$\tau_{Q, \alpha_i}(s)=\max\{\beta: s\in Q^{\alpha_i\cdot \beta}\},$$ $$\varsigma_P(s,t)=\min\{i\leqslant m: \tau_{P, \alpha_i}(s)=\tau_{P, \alpha_i}(t)\},$$ and $$\varsigma_Q(s,t)=\min\{i\leqslant l: \tau_{Q, \alpha_i}(s)=\tau_{Q, \alpha_i}(t)\}.$$  Let us also note that  for $(s,t)\in \Lambda_2(Q)$, since $Q=P^{\gamma\cdot \eta}\setminus P^{\gamma\cdot (\eta+1)}$ and $\alpha_l=\gamma$, it follows that  $\tau_{P,\alpha_l}(s)=\eta=\tau_{P, \alpha_l}(t)$. Therefore $\varsigma_P(s,t)\leqslant l$ and $$\varsigma_P(s,t)=\min\{i\leqslant l: \tau_{P, \alpha_i}(s)=\tau_{P, \alpha_i}(t)\}.$$   

For each $0\leqslant i\leqslant l$, let $\varrho_i=\omega^{\omega^{\ee_{i+1}}}\cdot \ldots \cdot \omega^{\omega^{\ee_l}}$, with the convention that $\varrho_l=1$.   Then $\alpha_i\cdot \varrho_i=\gamma$ for each $0\leqslant i\leqslant l$.   We claim that for each $s\in Q$ and $i\leqslant l$, $$\tau_{P, \alpha_i}(s)= \varrho_i\cdot \eta +\tau_{Q, \alpha_i}(s).$$ Once we prove this, we will have that for any $(s,t)\in \Lambda_2(Q)$, the sets $\{i\leqslant l: \tau_{P,\alpha_i}(s)=\tau_{P,\alpha_i}(t)\}$ and $\{i\leqslant l: \tau_{Q,\alpha_i}(s)=\tau_{Q,\alpha_i}(t)\}$ will be equal. We will then deduce that the minima of these sets, which are $\varsigma_P(s,t)$ and $\varsigma_Q(s,t)$ respectively, are equal, finishing the proof.

We return to the proof of the claim that for $i\leqslant l$ and $s\in Q$, $\tau_{P, \alpha_i}(s)=\varrho_i\cdot \eta+\tau_{Q, \alpha_i}(s)$.  Let us note that since $Q\cap P^{\gamma\cdot (\eta+1)}=\varnothing$, for $s\in Q$ and $\xi<\varrho_i$, $s\in P^{\alpha_i\cdot (\varrho_i\cdot \eta+\xi)}$ if and only if \begin{align*} s & \in P^{\alpha_i\cdot(\varrho_i\cdot \eta+\xi)}\setminus P^{\gamma\cdot (\eta+1)} =(P^{\alpha_i\cdot \varrho_i\cdot \eta}\setminus P^{\gamma\cdot (\eta+1)})^{\alpha_i\cdot \xi}= Q^{\alpha_i\cdot \xi}. \end{align*} Applying this with $\xi=\tau_{Q, \alpha_i}(s)$ and $\xi=\tau_{Q, \alpha_i}(s)+1$, we see that $s\in P^{\alpha_i\cdot (\varrho_i\cdot \eta+\tau_{Q, \alpha_i}(s))}$ and $s\notin P^{\alpha_i\cdot (\varrho_i\cdot \eta +\tau_{Q, \alpha_i}(s)+1)}$, which means $\tau_{P, \alpha_i}(s)=\varrho_i\cdot \eta+\tau_{Q, \alpha_i}(s)$.

\end{proof}

\section{Sums and  $\mathfrak{R}_1$}

If $\gamma=\omega^{\ee_0}+\ldots +\omega^{\ee_l}$ with $\ee_0\geqslant \ldots \geqslant \ee_l$, we will think of a tree $P$ with $\text{rank}(P)=\gamma$ as being made up of $l+1$ levels $P^{\gamma_i}\setminus P^{\gamma_{i+1}}$, $i=0, \ldots, l$,  where $\gamma_0=0$ and $\gamma_{i+1}=\omega^{\ee_0}+\ldots +\omega^{\ee_i}=\gamma_i+\omega^{\ee_i}$.   It follows from Proposition \ref{ted}$(x)$ that $\text{rank}(P^{\gamma_i}\setminus P^{\gamma_{i+1}})=\omega^{\ee_i}$ for each $i\leqslant l$.  Often we will want to stabilize some function by beginning with such a tree and for each $i\leqslant l$, choosing some subset $Q_i$ of $P^{\gamma_i}\setminus P^{\gamma_{i+1}}$. Furthermore, since the rank of $P^{\gamma_i}\setminus P^{\gamma_{i+1}}$ is the additively indecomposable ordinal $\omega^{\ee_i}$, $\varsigma_{P^{\gamma_i}\setminus P^{\gamma_{i+1}}}$ is defined. We will want to choose the subsets $Q_i$ in such a way that $\text{rank}(Q_i)=\omega^{\ee_i}$ and $\varsigma_{Q_i}=\varsigma_{P^{\gamma_i}\setminus P^{\gamma_{i+1}}}|_{\Lambda_2(Q_i)}$.   That is, we choose $Q=\cup_{i=0}^l Q_i$ in such a way that $Q$ has the same rank as $P$, the levels $Q_0, \ldots, Q_l$ are the levels of $Q$, and within each level, the separation function is preserved.   Similarly, we will often want to choose from such a tree a subtree which is contained within a specified subset of the levels of the original tree, also in  a way which preserves separation within each level. Our first two lemmas tell us how to do this. 

\begin{lemma} Let $\gamma$ be a non-zero ordinal and write $\gamma=\omega^{\ee_0}+\ldots +\omega^{\ee_l}$ with $\ee_0\geqslant \ldots \geqslant \ee_l$.  Let $\gamma_0=0$ and $\gamma_{i+1}=\gamma_i+\omega^{\ee_i}=\omega^{\ee_0}+\ldots +\omega^{\ee_i}$ for each $i\leqslant l$.   Suppose that $P$ is a tree with $\text{\emph{rank}}(P)=\gamma$ and $D\subset P^{\gamma_l}$ is such that $\text{\emph{rank}}(D)=\text{\emph{rank}}(P^{\gamma_l})$ and  $\varsigma_D=\varsigma_{P^{\gamma_l}}|_{\Lambda_2(D)}$.   Suppose also that for each $t\in \text{\emph{Leaves}}(D)$, there exist $s_t\in \text{\emph{Leaves}}(P^{\gamma_l})$ and $Q_t\subset P(s_t)$ such that $t\leqslant s_t$, $\text{\emph{rank}}(Q_t)=\text{\emph{rank}}(P(s_t))$,  and for each $i<l$, $Q^{\gamma_i}_t\setminus Q^{\gamma_{i+1}}_t=Q_t\cap (P^{\gamma_i}\setminus P^{\gamma_{i+1}})$ and $\varsigma_{Q^{\gamma_i}_t\setminus Q^{\gamma_{i+1}}_t}=\varsigma_{P^{\gamma_i}\setminus P^{\gamma_{i+1}}}|_{\Lambda_2(Q^{\gamma_i}_t\setminus Q^{\gamma_{i+1}}_t)}$.   Then with $$Q=D\cup \Bigl(\amalg_{t\in \text{\emph{Leaves}(D)}} Q_t\Bigr),$$ it follows that $\text{\emph{rank}}(Q)=\gamma$ and for each $i\leqslant l$, \begin{enumerate}[(i)]\item $Q^{\gamma_i}\setminus Q^{\gamma_{i+1}}=Q\cap (P^{\gamma_i}\setminus P^{\gamma_{i+1}})$, and \item $\varsigma_{Q^{\gamma_i}\setminus Q^{\gamma_{i+1}}}=\varsigma_{P^{\gamma_i}\setminus P^{\gamma_{i+1}}}|_{\Lambda_2(Q^{\gamma_i}\setminus Q^{\gamma_{i+1}})}$. \end{enumerate}

\label{grow1}

\end{lemma}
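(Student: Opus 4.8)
The plan is to realize $Q$ as an instance of the construction analyzed in Proposition~\ref{ted}$(xi)$, carried out over the tree $D$ with the family $\{Q_t\}_{t\in\text{Leaves}(D)}$, and then to read the derivatives $Q^{\gamma_i}$ directly off that proposition. First I would assemble the ingredients. Since $\gamma_l<\gamma=\text{rank}(P)$ we have $P^{\gamma_l}\neq\varnothing$, and $\gamma=\gamma_l+\omega^{\ee_l}$, so Proposition~\ref{ted}$(iii)$ gives $\text{rank}(P^{\gamma_l})=\omega^{\ee_l}$; hence $\text{rank}(D)=\omega^{\ee_l}>0$ and $D$ is a non-empty, well-founded subtree of $P$. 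For $t\in\text{Leaves}(D)$, $s_t\in\text{Leaves}(P^{\gamma_l})$ forces $\text{rank}(P(s_t))=\gamma_l$ by Proposition~\ref{ted}$(ix)$, so $\text{rank}(Q_t)=\gamma_l$, and $t\leqslant s_t$ gives $Q_t\subset P(s_t)\subset P(t)$. Thus Proposition~\ref{ted}$(xi)$ applies with $R=D$, $R_t=Q_t$, $\zeta=\gamma_l$, yielding $Q^\mu=D\cup\amalg_t Q_t^\mu$ for all $\mu\leqslant\gamma_l$, $Q^{\gamma_l+\eta}=D^\eta$ for all $\eta$, and in particular $\text{rank}(Q)=\gamma_l+\text{rank}(D)=\gamma_l+\omega^{\ee_l}=\gamma$. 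I would also record, for repeated use, that no $Q_t$ meets $P^{\gamma_l}$: any $a\in Q_t$ has $s_t<a$, and $a\in P^{\gamma_l}$ would contradict maximality of $s_t$ in $P^{\gamma_l}$; hence $Q_t^\mu\cap D=\varnothing$ for every $\mu$.

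Next I would dispose of the top level $i=l$, which is special. Here $\gamma_{l+1}=\gamma$, so $Q^{\gamma_l}=D$ (take $\eta=0$) while $Q^{\gamma_{l+1}}=Q^\gamma=\varnothing$, whence $Q^{\gamma_l}\setminus Q^{\gamma_{l+1}}=D$. On the other side $P^\gamma=\varnothing$, so $Q\cap(P^{\gamma_l}\setminus P^{\gamma})=Q\cap P^{\gamma_l}$, which equals $D$ because $D\subset P^{\gamma_l}$ while no $Q_t$ meets $P^{\gamma_l}$; this gives $(i)$. For $(ii)$, since $\text{rank}(P^{\gamma_l})=\omega^{\ee_l}$ is additively indecomposable the function $\varsigma_{P^{\gamma_l}}$ is defined, and $\varsigma_{Q^{\gamma_l}\setminus Q^{\gamma_{l+1}}}=\varsigma_D=\varsigma_{P^{\gamma_l}}|_{\Lambda_2(D)}=\varsigma_{P^{\gamma_l}\setminus P^{\gamma_{l+1}}}|_{\Lambda_2(Q^{\gamma_l}\setminus Q^{\gamma_{l+1}})}$, the middle equality being a hypothesis of the lemma.

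For $i<l$ we have $\gamma_i<\gamma_{i+1}\leqslant\gamma_l$, so the formulas from Proposition~\ref{ted}$(xi)$ give $Q^{\gamma_i}=D\cup\amalg_t Q_t^{\gamma_i}$ and $Q^{\gamma_{i+1}}=D\cup\amalg_t Q_t^{\gamma_{i+1}}$. Since $D\subset P^{\gamma_l}\subset P^{\gamma_{i+1}}$ we have $D\subset Q^{\gamma_{i+1}}$, and the $Q_t^{\gamma_i}$ avoid $D$, so subtracting and using that totally incomparable unions with $Q_t^{\gamma_{i+1}}\subset Q_t^{\gamma_i}$ behave componentwise gives $Q^{\gamma_i}\setminus Q^{\gamma_{i+1}}=\amalg_t(Q_t^{\gamma_i}\setminus Q_t^{\gamma_{i+1}})$. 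By hypothesis each $Q_t^{\gamma_i}\setminus Q_t^{\gamma_{i+1}}=Q_t\cap(P^{\gamma_i}\setminus P^{\gamma_{i+1}})$, so $Q^{\gamma_i}\setminus Q^{\gamma_{i+1}}=(\amalg_t Q_t)\cap(P^{\gamma_i}\setminus P^{\gamma_{i+1}})=Q\cap(P^{\gamma_i}\setminus P^{\gamma_{i+1}})$, the last equality because $D\subset P^{\gamma_{i+1}}$ contributes nothing; this proves $(i)$.

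It remains to prove $(ii)$ for $i<l$, which is the step I expect to require the most care. Write $\bar Q=Q^{\gamma_i}\setminus Q^{\gamma_{i+1}}=\amalg_t\bar Q_t$ with $\bar Q_t=Q_t^{\gamma_i}\setminus Q_t^{\gamma_{i+1}}$. Using $Q_t^{\gamma_{i+1}}=(Q_t^{\gamma_i})^{\omega^{\ee_i}}$ (Proposition~\ref{ted}$(iii)$) together with Proposition~\ref{ted}$(x)$, each $\bar Q_t$, as well as $\bar Q$ and $P^{\gamma_i}\setminus P^{\gamma_{i+1}}$, has rank $\omega^{\ee_i}$; hence all three define $\varsigma$ using the same partial products $\alpha_0,\ldots,\alpha_{\lambda(\omega^{\ee_i})-1}$. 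Fix $(s,s')\in\Lambda_2(\bar Q)$; incomparability of the union puts $s,s'$ into a common summand $\bar Q_t$. By Proposition~\ref{ted}$(vi)$, $\bar Q^{\alpha_j\cdot\delta}=\amalg_{t'}\bar Q_{t'}^{\alpha_j\cdot\delta}$ for all $j,\delta$, and disjointness of the summands then gives $\tau_{\bar Q,\alpha_j}(s)=\tau_{\bar Q_t,\alpha_j}(s)$ and $\tau_{\bar Q,\alpha_j}(s')=\tau_{\bar Q_t,\alpha_j}(s')$ for every $j$; since $\varsigma$ is the least $j$ at which these agree, $\varsigma_{\bar Q}(s,s')=\varsigma_{\bar Q_t}(s,s')$. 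By hypothesis $\varsigma_{\bar Q_t}=\varsigma_{P^{\gamma_i}\setminus P^{\gamma_{i+1}}}|_{\Lambda_2(\bar Q_t)}$, so $\varsigma_{\bar Q}(s,s')=\varsigma_{P^{\gamma_i}\setminus P^{\gamma_{i+1}}}(s,s')$, which is $(ii)$. The genuine content here is the principle that the separation function of a totally incomparable union restricts on each summand to that summand's own separation function; everything else is bookkeeping with the derivatives $Q^{\gamma_i}$ furnished by Proposition~\ref{ted}.
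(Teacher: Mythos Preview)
Your proof is correct and follows essentially the same approach as the paper's: both invoke Proposition~\ref{ted}$(xi)$ with $R=D$, $R_t=Q_t$, $\zeta=\gamma_l$ to compute the derivatives $Q^{\gamma_i}$, handle the top level $i=l$ separately using the fact that no $Q_t$ meets $P^{\gamma_l}$, and for $i<l$ use Proposition~\ref{ted}$(vi)$ on the incomparable union $\amalg_t(Q_t^{\gamma_i}\setminus Q_t^{\gamma_{i+1}})$ to transfer the separation computation from $\bar Q$ down to each summand $\bar Q_t$, where the hypothesis applies. The organization and the justifications you give match the paper's almost step for step.
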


\begin{proof} First, we note that since $\gamma=\gamma_l+\omega^{\ee_l}$, $\text{rank}(P^{\gamma_l})=\omega^{\ee_l}$ by Proposition \ref{ted}$(iii)$.    Therefore $\text{rank}(D)=\text{rank}(P^{\gamma_l})=\omega^{\ee_l}$, and $\varsigma_D$ is defined.    For each $t\in \text{Leaves}(D)$, $\text{rank}(P(s_t))=\gamma_l$ by Proposition \ref{ted}$(ix)$.   Therefore $\text{rank}(Q_t)=\text{rank}(P(s_t))=\gamma_l$ by hypothesis.     By Proposition \ref{ted}$(xi)$, $$Q^{\gamma_l}=D$$ and for each $i\leqslant l$, $$Q^{\gamma_i}= D\cup \Bigl(\amalg_{t\in \text{Leaves}(D)} Q_t^{\gamma_i}\Bigr)$$ and $$Q^{\gamma_i}\setminus Q^{\gamma_{i+1}}= \amalg_{t\in \text{Leaves}(D)} Q_t^{\gamma_i}\setminus Q_t^{\gamma_{i+1}}.$$  Now for any $t\in \text{Leaves}(D)$ and $u\in Q_t$, since $t\leqslant s_t<u$ and $s_t\in \text{Leaves}(P^{\gamma_l})$,  $\tau_P(u)<\tau_P(s_t)=\gamma_l$ by Proposition \ref{ted}(iv) and (ix).    Therefore $u\notin P^{\gamma_l}$, and since $t\in \text{Leaves}(D)$ and $u\in Q_t$ were arbitrary, $Q\cap P^{\gamma_l}\subset D$.  Since $\gamma_{l+1}=\gamma$, $Q^{\gamma_{l+1}}\subset P^{\gamma}=\varnothing$.  From this and the hypothesis that $D\subset P^{\gamma_l}$, we deduce that \begin{align*} Q^{\gamma_l}\setminus Q^{\gamma_{l+1}} & = Q^{\gamma_l} =D = D\cap(P^{\gamma_l}\setminus P^{\gamma_{l+1}})= D\cap P^{\gamma_l}= Q\cap P^{\gamma_l}= Q\cap (P^{\gamma_l}\setminus P^{\gamma_{l+1}}).\end{align*}    Furthermore, \begin{align*} \varsigma_{Q^{\gamma_l}\setminus Q^{\gamma_{l+1}}} & = \varsigma_D = \varsigma_{P^{\gamma_l}\setminus P^{\gamma_{l+1}}}|_{\Lambda_2(D)} = \varsigma_{P^{\gamma_l}\setminus P^{\gamma_{l+1}}}|_{\Lambda_2(Q^{\gamma_l}\setminus Q^{\gamma_{l+1}})}.\end{align*}  This yields items $(i)$ and $(ii)$ in the case $i=l$.

Now fix $i<l$.   Using the previous paragraph and the hypotheses,   \begin{align*} Q^{\gamma_i}\setminus Q^{\gamma_{i+1}} & = \amalg_{t\in \text{Leaves}(D)} Q_t^{\gamma_i}\setminus Q_t^{\gamma_{i+1}}= \amalg_{t\in \text{Leaves}(D)} Q_t\cap (P^{\gamma_i}\setminus P^{\gamma_{i+1}}) \\ & = (P^{\gamma_i}\setminus P^{\gamma_{i+1}}) \cap \amalg_{t\in \text{Leaves}(D)} Q_t.\end{align*} Since $$D\cap (P^{\gamma_i}\setminus P^{\gamma_{i+1}})\subset P^{\gamma_l}\cap (P^{\gamma_i}\setminus P^{\gamma_{i+1}})=\varnothing,$$ \begin{align*} Q^{\gamma_i}\setminus Q^{\gamma_{i+1}} & = (P^{\gamma_i}\setminus P^{\gamma_{i+1}})\cap \amalg_{t\in \text{Leaves}(D)} Q_t \\ & = (P^{\gamma_i}\setminus P^{\gamma_{i+1}})\cap\Bigl(D\cup \bigl(\amalg_{t\in \text{Leaves}(D)}Q_t\bigr)\Bigr)= Q\cap (P^{\gamma_i}\setminus P^{\gamma_{i+1}}).   \end{align*} This completes $(i)$.  Next, since $$Q^{\gamma_i}\setminus Q^{\gamma_{i+1}}=\amalg_{t\in \text{Leaves}(D)} Q_t^{\gamma_i}\setminus Q_t^{\gamma_{i+1}},$$ it follows from Proposition \ref{ted}$(vi)$ that for any ordinal $\zeta$, $$(Q^{\gamma_i}\setminus Q^{\gamma_{i+1}})^\zeta = \amalg_{t\in \text{Leaves}(D)} (Q_t^{\gamma_i}\setminus Q^{\gamma_{i+1}}_t)^\zeta.$$  From this it follows that $\tau_{Q_t^{\gamma_i}\setminus Q_t^{\gamma_{i+1}}}=\tau_{Q^{\gamma_i}\setminus Q^{\gamma_{i+1}}}|_{Q_t}$, and $$\varsigma_{Q_t^{\gamma_i}\setminus Q_t^{\gamma_{i+1}}}=\varsigma_{Q^{\gamma_i}\setminus Q^{\gamma_{i+1}}}|_{\Lambda_2(Q_t^{\gamma_i}\setminus Q_t^{\gamma_{i+1}})}.$$   Therefore by our assumptions on the sets $Q_t$, for any $(u,v)\in \Lambda_2(Q^{\gamma_i}\setminus Q^{\gamma_{i+1}})$, if $t\in \text{Leaves}(D)$ is such that $t<u$, $$\varsigma_{Q^{\gamma_i}\setminus Q^{\gamma_{i+1}}}(u,v)=\varsigma_{Q_t^{\gamma_i}\setminus Q_t^{\gamma_{i+1}}}(u,v)=\varsigma_{P^{\gamma_i}\setminus P^{\gamma_{i+1}}}(u,v).$$  This concludes $(ii)$.

\end{proof}

\begin{lemma} Let $\gamma$ be a non-zero ordinal and write $\gamma=\omega^{\ee_0}+\ldots +\omega^{\ee_l}$ with $\ee_0\geqslant \ldots \geqslant \ee_l$.    Let $\gamma_0=0$ and $\gamma_{i+1}=\gamma_i+\omega^{\ee_i}$ for $i=0, \ldots, l$.    Fix a non-empty set $A\subset l+1$ and write $A=\{a(0), \ldots, a(p)\}$ with $a(0)<\ldots <a(p)$. Let $\beta=\omega^{\ee_{a(0)}}+\ldots +\omega^{\ee_{a(p)}}$, $\beta_0=0$, and $\beta_{i+1}=\beta_i+\omega^{\ee_{a(i)}}$ for $i=0, \ldots, p$.     Define $$Q=\bigcup_{i=0}^p P^{\gamma_{a(i)}}\setminus P^{\gamma_{a(i)+1}}.$$  Then $\text{\emph{rank}}(Q)=\beta$ and for each $i\leqslant p$, the following hold: \begin{enumerate}[(i)]\item $Q^{\beta_i}\setminus Q^{\beta_{i+1}}= P^{\gamma_{a(i)}}\setminus P^{\gamma_{a(i)+1}}$.   \item $\varsigma_{Q^{\beta_i}\setminus Q^{\beta_{i+1}}}=\varsigma_{P^{\gamma_{a(i)}}\setminus P^{\gamma_{a(i)+1}}}|_{\Lambda_2(Q^{\beta_i}\setminus Q^{\beta_{i+1}})}$. \end{enumerate}


\label{grow2}
\end{lemma}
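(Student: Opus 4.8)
The plan is to prove this by induction on the structure of $Q$, peeling off levels of $P$ from the top. The key observation is that $Q$ is obtained from $P$ by deleting the levels indexed by $(l+1)\setminus A$, and that deletion of a top or bottom level interacts predictably with derivatives via Proposition~\ref{ted}. Concretely, I would first handle the extreme cases: if $A=l+1$ then $Q=P$ and there is nothing to prove; if $a(p)<l$, then $Q\subset P^{\gamma_{a(p)+1}}$ is obtained by \emph{truncating} $P$ below rank $\gamma_{a(p)+1}$, i.e. $Q=P\setminus P^{\gamma_{a(p)+1}}$ when $A$ is an initial segment; and if $a(0)>0$, then $Q=P^{\gamma_{a(0)}}$, a derivative of $P$. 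The general $Q$ is a composition of these two operations applied alternately.

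The technical heart is a pair of claims analogous to Proposition~\ref{ted}$(v)$ and $(x)$ but for the composite operation. First I would establish: if $R$ is any tree, $\mu\leqslant\nu$ ordinals, and we set $S=R^\mu\setminus R^\nu$, then $S^\zeta=R^{\mu+\zeta}\setminus R^\nu$ for all $\zeta\leqslant$ (the rank of $S$), and more precisely $S^\zeta\setminus S^\xi=R^{\mu+\zeta}\setminus R^{\mu+\xi}$ whenever $\zeta\leqslant\xi$; this follows by combining $(iii)$, $(v)$, $(viii)$, $(x)$. Using this with $R=P$, $\mu=\gamma_{a(0)}$, and tracking the successive deletions of interior levels, one shows by induction on $p$ that $Q^{\beta_i}=\bigcup_{j\geqslant i}(P^{\gamma_{a(j)}}\setminus P^{\gamma_{a(j)+1}})$ together with nothing from $P^{\gamma_{a(p)+1}}$, which immediately gives $Q^{\beta_i}\setminus Q^{\beta_{i+1}}=P^{\gamma_{a(i)}}\setminus P^{\gamma_{a(i)+1}}$, i.e. item~$(i)$, and also $\text{rank}(Q)=\beta$ since $Q^{\beta_{p+1}}=\varnothing$ and $Q^{\beta_i}\neq\varnothing$ for $i\leqslant p$.

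With item~$(i)$ in hand, item~$(ii)$ reduces to a statement about derivatives restricted to a fixed level. Write $L_i=P^{\gamma_{a(i)}}\setminus P^{\gamma_{a(i)+1}}=Q^{\beta_i}\setminus Q^{\beta_{i+1}}$; this set has additively indecomposable rank $\omega^{\ee_{a(i)}}$, so $\varsigma_{L_i}$ is defined, and there is nothing to compare on the right-hand side because $L_i$ is \emph{literally the same set} with the same underlying order whether we view it inside $P$ or inside $Q$. The separation function $\varsigma_{L_i}$ is defined intrinsically from the derivatives of $L_i$ alone (via the third description $\varsigma_{L_i}(s,t)=\min\{j: s\approx_j t\}$, where $\approx_j$ is determined by the partition of $L_i$ by the ordinals $\alpha_j\cdot\delta$), so $\varsigma_{Q^{\beta_i}\setminus Q^{\beta_{i+1}}}=\varsigma_{L_i}=\varsigma_{P^{\gamma_{a(i)}}\setminus P^{\gamma_{a(i)+1}}}$ as functions on the \emph{same} domain $\Lambda_2(L_i)$, and item~$(ii)$ is the restriction of this equality to $\Lambda_2(Q^{\beta_i}\setminus Q^{\beta_{i+1}})=\Lambda_2(L_i)$, which is a tautology. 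The only real work, then, is the bookkeeping in the induction establishing item~$(i)$; I expect the main obstacle to be organizing the alternating truncation/derivative operations cleanly enough that the indices $\gamma_{a(i)}$, $\gamma_{a(i)+1}$, $\beta_i$ line up, and in particular verifying the base-case identity $\beta_{i+1}-\beta_i=\omega^{\ee_{a(i)}}=\gamma_{a(i)+1}-\gamma_{a(i)}$ propagates correctly through $(iii)$ at each step.
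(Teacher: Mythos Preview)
Your handling of item $(ii)$ is exactly right and matches the paper: once $(i)$ is established, the two level-sets are literally equal, so their intrinsic separation functions coincide tautologically. Your target identity $Q^{\beta_i}=\bigcup_{j\geqslant i}(P^{\gamma_{a(j)}}\setminus P^{\gamma_{a(j)+1}})$ is also correct and is precisely what the paper proves.

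However, there is a genuine gap in your proposed mechanism for $(i)$. The single-block identity you isolate, $S=R^\mu\setminus R^\nu\Rightarrow S^\zeta=R^{\mu+\zeta}\setminus R^\nu$, applies only when $S$ is a \emph{contiguous} slice of derivatives of $R$. The tree $Q$ is not of this form unless $A$ is an interval; in general it is a union of non-adjacent levels, and the formula gives no information about how derivatives of $Q$ interact with the gaps between the $Q_i:=P^{\gamma_{a(i)}}\setminus P^{\gamma_{a(i)+1}}$. The phrase ``tracking the successive deletions of interior levels'' hides the actual difficulty: to make your induction on $p$ go through, you must show $Q^{\beta_1}=\bigcup_{j\geqslant 1}Q_j$, and the non-trivial half of this is that every $t\in\bigcup_{j\geqslant 1}Q_j$ survives $\beta_1$ derivatives inside $Q$. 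This does \emph{not} follow from the single-block formula; it follows because such a $t$ lies in $P^{\gamma_{a(0)+1}}$, whence Proposition~\ref{ted}$(x)$ gives $\text{rank}\bigl(P(t)\cap Q_0\bigr)=\omega^{\ee_{a(0)}}=\beta_1$, so $\text{rank}(Q(t))\geqslant\beta_1$.

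The paper supplies exactly this missing ingredient via Proposition~\ref{ted}$(xi)$: it observes that $\text{Leaves}\bigl(\bigcup_{j\geqslant i+1}Q_j\bigr)=\text{Leaves}(Q_{i+1})$, that for each such leaf $t$ the set $R_t=P(t)\cap Q_i$ has rank $\omega^{\ee_{a(i)}}$, and that $Q_i=\amalg_{t}R_t$. Then $(xi)$ says derivatives of the combined tree first consume all of the $R_t$ before touching $\bigcup_{j\geqslant i+1}Q_j$, yielding $Q^{\beta_i+\zeta}=\bigl(\bigcup_{j\geqslant i+1}Q_j\bigr)\cup Q_i^\zeta$ for $\zeta\leqslant\omega^{\ee_{a(i)}}$. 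Your induction on $p$ can be made to work, but the inductive step needs this ``leaves-with-attached-subtrees'' observation (or the direct rank computation above), not the contiguous-block formula you propose as the technical heart.
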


\begin{proof} If $p=0$, items $(i)$ and $(ii)$ are trivial, while it follows from Proposition \ref{ted}$(x)$ that  $\text{rank}(Q)=\text{rank}(Q_p)=\omega^{\ee_{a(0)}}=\beta$. For the remainder of the proof, assume $0<p$.  For convenience, for each $i\leqslant p$, let $Q_i=P^{\gamma_{a(i)}}\setminus P^{\gamma_{a(i)+1}}$.  Note that for $i<p$, $\text{Leaves}(Q_{i+1})=\text{Leaves}(\cup_{j=i+1}^p Q_j)$. This is tautological if $i+1=p$, and otherwise if $i+1<m\leqslant p$ and if $t_0\in \text{Leaves}(Q_m)$, $$\varnothing\neq P(t_0)\cap (P^{\gamma_{a(m-1)}}\setminus P^{\gamma_{a(m-1)+1}})\subset \Bigl(\cup_{j=i+1}^p Q_j\Bigr)(t_0),$$ so $t_0\notin \text{Leaves}(\cup_{j=i+1}^p Q_j)$.

Now we may use Proposition \ref{ted} parts $(v)$, $(x)$, and $(xi)$, we find that for each $i\leqslant p$ and $\zeta\leqslant \omega^{a(i)}$, $$Q^{\beta_i+\zeta}= \Bigl(\bigcup_{j=i+1}^p Q_j\Bigr)\cup Q_i.$$  Here, if $i=p$, the union $\cup_{j=i+1}^p Q_j$ is taken to be the empty set by convention.  This is done by induction  on the  appropriate set of $(i, \zeta)$, ordered lexicographically.   The base case $i=0$, $\zeta=0$ case is clear.  If we have this claim for some $i$ and $\zeta=0$, we use  Proposition \ref{ted}$(xi)$ with $R=\bigcup_{j=i+1}^p Q_j$ and $R_t=P(t)\cap (P^{\gamma_{a(i)}}\setminus P^{\gamma_{a(i)+1}})$ for each $t\in \text{Leaves}(Q_{i+1})$, noting that $$Q_i= \amalg_{t\in \text{Leaves}(Q_{i+1})} R_t$$   to obtain the claim for this $i$ and all $\zeta\leqslant \omega^{a(i)}$.  Now if for some $i<p$, we have the claim for all $\zeta\leqslant \omega^{a(i)}$, we deduce the $i+1$, $\zeta=0$ case of the claim by noting that  $$Q^{\beta_{i+1}+0} =Q^{\beta_{i+1}}=Q^{\beta_i+\omega^{a(i)}} = \Bigl(\bigcup_{j=i+1}^p Q_j\Bigr)\cup Q_i^{\omega^{a(i)}}= \bigcup_{j=i+1}^p Q_j= \Bigl(\bigcup_{j=i+2}^p Q_j\Bigr)\cup Q_{i+1}.$$  Using the $i=p$, $\zeta=0$ case of the claim, we deduce that  $$Q^{\beta_p}= Q^{\beta_{p-1}+\omega^{a(p-1)}}=Q_p=P^{\gamma_{a(p)}}\setminus P^{\gamma_{a(p)+1}}.$$ Since $\gamma_{a(p)+1}=\gamma_{a(p)}+\omega^{a(p)}$, $$\omega^{a(p)}=\text{rank}(P^{\gamma_{a(p)}}\setminus P^{\gamma_{a(p)+1}})=\text{rank}(Q^{\beta_p}), $$ an appeal to Proposition \ref{ted}$(iii)$ yields that $\text{rank}(Q)= \beta_p+\omega^{a(p)}=\beta_{p+1}=\beta$.

Now using the claim from the previous paragraph, we deduce that for each $i\leqslant p$, $$Q^{\beta_i}\setminus Q^{\beta_{i+1}}= \Biggl(\Bigl(\cup_{j=i+1}Q_j\Bigr)\cup Q_i\Biggr) \setminus \Biggl(\Bigl(\cup_{j=i+2}Q_j\Bigr)\cup Q_{i+1}\Biggr) = Q_i =P^{\gamma_{a(i)}}\setminus P^{\gamma_{a(i)+1}},$$ which is $(i)$.    For $(ii)$, we note that $(i)$ implies that $\tau_{Q^{\beta_i}\setminus Q^{\beta_{i+1}}}= \tau_{P^{\gamma_{a(i)}}\setminus P^{\gamma_{a(i)+1}}}$, which yields $(ii)$.

\end{proof}

\begin{rem}\upshape We will typically use Lemmas \ref{grow1} and \ref{grow2} in the specific case in which $\gamma=\omega^\ee\cdot(l+1)$ for some $\ee\in \textbf{Ord}$ and some $l<\omega$. In this case, $\gamma_i=\omega^\ee\cdot i$.

\end{rem}

Our next result yields that functions which depend on leaves may, after passing to a subtree which preserves the positions of its members in the derivatives, be made to be independent of the leaves.

\begin{lemma} For a well-founded tree, $P$ and  $0<n<\omega$,  let $$E_n(P)=\{(t_0,\ldots, t_{n-1}, t_n): t_0<\ldots <t_{n-1}\leqslant t_n, t_n\in \text{\emph{Leaves}}(P)\}.$$  Let $E_0(P)=\text{\emph{Leaves}}(P)$ and let $\Lambda_0(P)=\{\varnothing\}$.         Then for any $k<\omega$ and any function $f:E_n(P)\to k+1$, there exist $Q\subset P$ such that  $\text{\emph{rank}}(Q)=\text{\emph{rank}}(P)$ and $\tau_Q=\tau_P|_Q$ and a function $F:\Lambda_n(Q)\to k+1$ such that $$F(t_0, \ldots, t_{n-1})=f(t_0, \ldots, t_{n-1}, t_n)$$ for any $(t_0, \ldots, t_{n-1}, t_n)\in E_n(Q)$. Here, for the $n=0$ case,  we obey the convention that if $t_n\in \text{\emph{Leaves}}(Q)$, $(t_0, \ldots, t_{n-1})$ refers to $\varnothing\in \Lambda_0(Q)$.  Thus the conclusion in the $n=0$ case is simply that $f|_{\text{\emph{Leaves}}(Q)}\equiv F(\varnothing)$.

\label{tek}

\end{lemma}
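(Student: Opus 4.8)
The plan is to argue by transfinite induction on $\mathrm{rank}(P)$. The structural fact that makes this work is that, although a root subtree $P[r]$ can have $\mathrm{rank}(P[r])=\mathrm{rank}(P)$, one always has $\mathrm{rank}(P(r))=\tau_P(r)<\mathrm{rank}(P)$; so although we decompose $P$ along its roots, the recursion gets applied to the trees $P(r)$, whose ranks are strictly smaller. The case $\mathrm{rank}(P)=0$ is vacuous, and $\mathrm{rank}(P)=1$ (an antichain) is immediate: for $n\geq1$ take $Q=P$ with $F$ read off from $f$ (and $\Lambda_n(P)=\varnothing$ when $n\geq2$), and for $n=0$ take $Q$ to be a single point. (In fact the inductive step below also covers $\mathrm{rank}(P)=1$ once leaf roots are handled.)

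\textbf{Inductive step.} Write $P=\amalg_{r\in\mathrm{Roots}(P)}P[r]$. Since $\Lambda_n$, $E_n$, $\mathrm{Leaves}$ and $\tau$ all distribute over incomparable unions (Proposition \ref{ted}(vi),(viii)), it suffices to produce for each root $r$ a subtree $Q[r]\subseteq P[r]$ with $\mathrm{rank}(Q[r])=\mathrm{rank}(P[r])$, $\tau_{Q[r]}=\tau_P|_{Q[r]}$, and a witnessing $F_r$, and then reassemble: for $n\geq1$ one simply sets $Q=\amalg_rQ[r]$ and glues the $F_r$ (the pieces of $\Lambda_n(Q)$ over the various $r$ are pairwise disjoint, so no conflict arises), whereas for $n=0$ — where $\Lambda_0=\{\varnothing\}$ forces a single color — one first applies the pigeonhole principle to the finitely many colors $F_r(\varnothing)$ to select a value $c$ attained on a set $S$ of roots with $\sup_{r\in S}(\tau_P(r)+1)=\mathrm{rank}(P)$, and takes $Q$ to be the incomparable union over $S$. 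Now fix a root $r$. If $r$ is a leaf of $P$, then $P[r]=\{r\}$ and $Q[r]:=\{r\}$ works trivially (for $n=1$ put $F_r(r):=f(r,r)$; for $n=0$ record $c_r:=f(r)$ for the pigeonhole step). Otherwise $\mathrm{Leaves}(P[r])=\mathrm{Leaves}(P(r))$, and the tuples of $E_n(P[r])$ split into those with $t_0>r$, identified with $E_n(P(r))$, and those with $t_0=r$, identified with $E_{n-1}(P(r))$ by deleting the initial coordinate $r$. Accordingly, when $n\geq1$ we first apply the inductive hypothesis on $P(r)$ — of rank $\tau_P(r)<\mathrm{rank}(P)$ — to the auxiliary coloring $g_r(t_1,\dots,t_n):=f(r,t_1,\dots,t_n)$ on $E_{n-1}(P(r))$, obtaining a $\tau$-preserving full-rank $Q_{r,1}\subseteq P(r)$ on which $g_r$ factors through $\Lambda_{n-1}$; then we apply the hypothesis again, now to $Q_{r,1}$ (still of rank $<\mathrm{rank}(P)$) with the coloring $f|_{E_n(Q_{r,1})}$, obtaining a $\tau$-preserving full-rank $Q'_r\subseteq Q_{r,1}$ on which $f$ factors through $\Lambda_n$. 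Set $Q[r]:=\{r\}\cup Q'_r$; Proposition \ref{ted}(xi) with $R=\{r\}$ gives $\mathrm{rank}(Q[r])=\tau_P(r)+1=\mathrm{rank}(P[r])$ and $\tau_{Q[r]}=\tau_P|_{Q[r]}$, and $F_r$ is assembled on $\Lambda_n(Q[r])$ from the witness for $f$ (on tuples avoiding $r$) and the witness for $g_r$ (on tuples beginning with $r$). When $n=0$ there is no $g_r$: one applies the hypothesis once to $f|_{\mathrm{Leaves}(P(r))}$, gets a constant color $c_r$ on $\mathrm{Leaves}(Q'_r)$, and these $c_r$ feed the pigeonhole step.

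\textbf{The crux.} The one point that is not pure bookkeeping, and which I expect to be the main obstacle, is checking that passing from $Q_{r,1}$ down to $Q'_r$ does not destroy the stabilization of $g_r$ that has already been arranged on $Q_{r,1}$. This is precisely where the conclusion $\tau_{Q'_r}=\tau_{Q_{r,1}}|_{Q'_r}$ of the inductive hypothesis is indispensable: by Proposition \ref{ted}(ix) it forces $\mathrm{Leaves}(Q'_r)\subseteq\mathrm{Leaves}(Q_{r,1})$, hence $E_{n-1}(Q'_r)\subseteq E_{n-1}(Q_{r,1})$, so the function witnessing that $g_r$ is stable on $Q_{r,1}$ restricts to a function witnessing it on $Q'_r$; the same remark (with $n$ in place of $n-1$) is what lets $F_r$ be defined on all of $\Lambda_n(Q[r])$. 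Beyond this, everything reduces to routine applications of Proposition \ref{ted} (chiefly parts (vi), (viii), (ix), (xi)) and, in the $n=0$ case, the finiteness of the color set; note in particular that $\tau$-preservation automatically yields $\mathrm{Leaves}(Q)\subseteq\mathrm{Leaves}(P)$, so the condition $F(t_0,\dots,t_{n-1})=f(t_0,\dots,t_{n-1},t_n)$ on $E_n(Q)$ is even well-posed.
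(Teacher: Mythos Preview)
Your proof is correct and takes essentially the same approach as the paper's: recurse into strictly-smaller-rank subtrees, apply the inductive hypothesis twice (once for $f$, once for the auxiliary coloring $g_r$ obtained by freezing the first coordinate at the root), and for $n=0$ reassemble via pigeonhole on the finitely many colors. The only difference is organizational: you handle all ranks uniformly by decomposing over roots and recursing into each $P(r)$ (whose rank $\tau_P(r)$ is always strictly below $\mathrm{rank}(P)$), whereas the paper separates the limit case (root decomposition, recurse into $P[t]$) from the successor case (pick a single $t\in P^\zeta$, recurse into $P(t)$) and sets up a lexicographic induction on $(\xi,n)$ whose second coordinate it never actually uses.
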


\begin{proof} First note that if $Q\subset P$ satisfies $\tau_Q=\tau_P|_Q$, then for any $t\in Q$, $t\in \text{Leaves}(Q)$ if and only if $\tau_Q(t)=0$ if and only if $\tau_P(t)=0$ if and only if $t\in \text{Leaves}(P)$. From this it follows that $\text{Leaves}(Q)=Q\cap \text{Leaves}(P)$ if $\tau_Q=\tau_P|_Q$. Therefore $E_n(Q)\subset E_n(P)$ for such $Q$, and $f$ is defined on $E_n(Q)$.

For $n<\omega$ and $\xi\in \textbf{Ord}$, let  $T(\xi,n)$ be the following statement: For any $k<\omega$, any tree $P$ with $\text{rank}(P)=\xi$,  and any function $f:E_n(P)\to k+1$, there exist $Q\subset P$ such that  $\text{rank}(Q)=\text{rank}(P)$ and $\tau_Q=\tau_P|_Q$ and a function $F:\Lambda_n(Q)\to k+1$ such that $$F(t_0, \ldots, t_{n-1})=f(t_0, \ldots, t_{n-1}, t_n)$$ for any $(t_0, \ldots, t_{n-1}, t_n)\in E_n(Q)$. 

The lemma is equivalent to the assertion that $T(\xi,n)$ holds for all $(\xi,n)\in  \textbf{Ord}\times \omega$.  We prove $T(\xi,n)$ by induction on $(\xi,n)\in  \textbf{Ord}\times \omega$, ordered lexicographically.  To that end, fix some $(\xi,n)\in \textbf{Ord}\times \omega$ and assume that $T(\xi,m)$ holds for all $m<n$ and that $T(\zeta, m)$ holds for all $\zeta<\xi$ and $m<\omega$.

Case $1$, $\xi=0$: This is vacuous.

Case $2$, $\xi=1$: Note that if $P$ is a tree with $\text{rank}(P)=1$,  $P$ consists only of incomparable leaves. Let $Q=\{t\}\subset P$ be arbitrary. Then $\text{rank}(Q)=1=\text{rank}(P)$ and $\tau_Q=\tau_P|_Q\equiv 0$.  If $n=0$, let $F(\varnothing)=f(t)$.  Since $E_0(Q)=\text{Leaves}(Q)=\{t\}$, we reach the desired conclusion if $n=0$. If $n=1$, let $F(t)=f(t,t)$. Since $\Lambda_1(Q)=\{(t)\}$ and $E_1(Q)=\{(t,t)\}$, this defines $F$, and it is easy to see that the conclusion is satisfied by this choice of $F$.   If $n>1$, $E_n(Q)=\Lambda_n(Q)=\varnothing$, so the conclusion on $F$ is vacuous in this case.  Thus $T(1,n)$ holds.

Case $3$, $\xi$ is a limit ordinal: Assume $P$ with $\text{rank}(P)=\xi$, $k<\omega$, $f:E_n(P)\to k+1$ are given.    Write $P=\amalg_{t\in \text{Roots}(P)}P[t]$. Note that $E_n(P)$, $\Lambda_n(P)$ are equal to the disjoint unions $\cup_{t\in \text{Roots}(P)} E_n(P[t])$, $\cup_{t\in \text{Roots}(P)} \Lambda_n(P[t])$, respectively. Furthermore, $\tau_{P[t]}=\tau_P|_{P[t]}$ and $\text{rank}(P[t])<\xi$ for each $t\in \text{Roots}(P)$.  To see that $\text{rank}(P[t])<\xi$ for each $t\in \text{Roots}(P)$, we note that the alternative is that $t\in P^\zeta[t]$ for every $\zeta<\xi$, in which case $t\in P^\xi=\varnothing$.    For each $t\in \text{Roots}(P)$, apply $T(\text{rank}(P[t]), n)$  to $f|_{E_n(P[t])}$ to obtain $Q_t\subset P[t]$ and $F_t:\Lambda_n(Q_t)\to k+1$ satisfying the conclusions.   In the case $n=0$, for each $i\leqslant k$, let $$M_i=\{t\in \text{Roots}(P): F_t(\varnothing)=i\}.$$   Note that since \begin{align*} \xi & =\sup \Bigl\{\text{rank}(P[t]):t\in \cup_{i\leqslant k}M_i\Bigr\}=\max_{i\leqslant k} \sup\{\text{rank}(P[t]): t\in M_i\} \\ & = \max_{i\leqslant k} \sup \{\text{rank}(Q_t): t\in M_i\},\end{align*} there exists $i\leqslant k$ such that $\sup\{\text{rank}(Q_t): t\in M_i\}=\xi$.  Then let $F(\varnothing)=i$ and let $Q=\amalg_{t\in M_i}Q_t$.  Then by Proposition \ref{ted}$(vi)$, $$\text{rank}(Q)=\sup \{\text{rank}(Q_t): t\in M_i\}=\xi=\text{rank}(P).$$    For each $t\in M_i$, since $Q$ is defined by an incomparable union, $$\tau_Q|_{Q_t}=\tau_{Q_t}=\tau_{P[t]}|_{Q_t}=\tau_P|_{Q_t}.$$  Since this holds for any $t\in M_i$, $\tau_Q=\tau_P|_Q$.    For $s\in \text{Leaves}(Q)$, let $t\in M_i$ be such that $s\in \text{Leaves}(Q_t)$.    Then by our choice of $i$,  $$F(\varnothing)=i=F_t(\varnothing) =f(s).$$    This finishes Case $3$ if $n=0$.     If  $n>0$, let $Q=\amalg_{t\in \text{Roots}(P)}Q_t$ and define $F$ on $Q$ by letting $F|_{Q_t}=F_t$.     That $\text{rank}(Q)=\xi$ and $\tau_Q=\tau_P|_Q$ follow as in the $n=0$ case.     Note that $E_n(Q)$ and $\Lambda_n(Q)$ are the disjoint unions $\cup_{t\in \text{Roots}(P)} E_n(Q_t)$ and $\cup_{t\in \text{Roots}(P)} \Lambda_n(Q_t)$, respectively. Fix $(t_0, \ldots, t_{n-1}, t_n)\in E_n(Q)$ and note that there must exist some $t\in \text{Roots}(P)$ such that $(t_0, \ldots, t_{n-1}, t_n)\in E_n(Q_t)$, so $$F(t_0, \ldots, t_{n-1})=F_t(t_0, \ldots, t_{n-1})=f(t_0, \ldots, t_{n-1}, t_n).$$

Case $4$, $\xi=\zeta+1$, $\zeta>0$: Fix $P$ with $\text{rank}(P)=\zeta+1$, $k<\omega$, and $f:E_n(P)\to k+1$.   Note that since $\text{rank}(P)=\zeta+1$, $\text{rank}(P^\zeta)=1$ and $P^\zeta=\text{Leaves}(P^\zeta)$.    Fix $t\in P^\zeta=\text{Leaves}(P^\zeta)$ and note that $\tau_P(t)=\zeta$, whence $\text{rank}(P(t))=\zeta$ by Proposition \ref{ted}$(ix)$.       Note that $\tau_{P(t)}=\tau_P|_{P(t)}$ by Proposition \ref{ted}$(viii)$.     Apply $T(\zeta, n)$ to $P(t)$ and the restriction of $f$ to $E_n(P(t))$ to find $S\subset P(t)$ such that $\text{rank}(S)=\text{rank}(P(t))=\zeta$ and $\tau_S=\tau_{P(t)}|_S=\tau_P|_S$ and a function $F_0:\Lambda_n(S)\to k+1$ satisfying the conclusions of the lemma.   If $n=0$, we let $Q=\{t\}\cup S$, so $Q(t)=S$.  In this case, $Q^\zeta=\{t\}$, so $\text{rank}(Q)=\zeta+1=\xi$.  Moreover, $\text{Leaves}(S)=\text{Leaves}(Q)$, so $f|_{\text{Leaves}(Q)}=f|_{\text{Leaves}(S)}\equiv F_0(\varnothing)$.     Also, since $Q(t)=S$ has rank $\zeta$, $\tau_Q(t)=\zeta=\tau_P(t)$.  By Proposition \ref{ted}$(viii)$, $$\tau_Q|_{Q(t)}=\tau_{Q(t)}=\tau_S=\tau_P|_S,$$ whence $\tau_Q=\tau_P|_Q$.    This concludes the $n=0$ case.      Suppose now that  $n>0$ and define $g:E_{n-1}(S)\to k+1$ by letting $$g(t_0, \ldots, t_{n-2}, t_{n-1})=f(t, t_0, \ldots, t_{n-1}).$$  This is well-defined, since $\text{Leaves}(S)\subset \text{Leaves}(P)$ implies that $E_{n-1}(S)\subset E_{n-1}(P)$.   By $T(\zeta, n-1)$, there exist a subtree $T$ of $S$ with $\text{rank}(T)=\text{rank}(S)=\zeta$,  $\tau_T=\tau_S|_T=\tau_P|_T$, and a function $G:\Lambda_{n-1}(T)\to k+1$ such that $$G(t_0, \ldots, t_{n-2})=g(t_0, \ldots, t_{n-2}, t_{n-1})=f(t, t_0, \ldots, t_{n-1})$$ for every $(t_0, \ldots, t_{n-1})\in E_{n-1}(T)$.   Let $Q=\{t\}\cup T$.  Define $F:\Lambda_n(Q)\to k+1$ by letting $F(t_0, \ldots, t_{n-1})=F_0(t_0, \ldots, t_{n-1})$ if $t<t_0$, in which case $(t_0, \ldots, t_{n-1})\in \Lambda_n(T)\subset  \Lambda_n(S)$, and $F(t_0,  \ldots, t_{n-1})=G(t_1, \ldots, t_{n-1})$ if $t_0=t$, in which case $(t_1, \ldots, t_{n-1})\in \Lambda_{n-1}(T)$.   We note that since $Q(t)=T$, $Q^\zeta=\{t\}$ and $\text{rank}(Q)=\zeta+1=\xi$.     Furthermore, $\tau_Q(t)=\zeta=\tau_P(t)$ and for $t<s\in Q$, $$\tau_Q(s)=\tau_T(s)=\tau_P(s),$$ so $\tau_Q=\tau_P|_Q$.    Finally, for $(t_0, \ldots, t_{n-1}, t_n)\in E_n(Q)$, either $t_0=t$, in which case $$f(t_0, \ldots, t_n)=g(t_1, \ldots, t_n)=G(t_1, \ldots, t_{n-1})=F(t, t_1, \ldots, t_{n-1}),$$ or $t<t_0$, in which case $$f(t_0, \ldots, t_n)= F_0(t_0, \ldots, t_{n-1})=F(t_0, \ldots, t_{n-1}),$$ since $(t_0, \ldots, t_n)\in E_n(T)\subset E_n(S)$.

\end{proof}

\begin{rem}\upshape We will use the $n=0$ and $n=1$ cases of Lemma \ref{tek}. Note that $E_1(P)=\Pi(P)$.  We will often use the $n=0$ case of Lemma \ref{tek} with set rather than function notation. That is, if $M_0, \ldots, M_k\subset \text{Leaves}(P)$ are such that $\cup_{i\leqslant k}M_i=\text{Leaves}(P)$, then there exist $i\leqslant k$ and  $Q\subset P$ with $\text{rank}(Q)=\text{rank}(P)$ and $\tau_Q=\tau_P|_Q$ such that $\text{Leaves}(Q)=Q\cap M_i$.    To see this, we apply the $n=0$ case of Lemma \ref{tek} to the function $f:\text{Leaves}(P)\to k+1$ given by $f(t)=\min\{i\leqslant k: t\in M_i\}$ to obtain $Q\subset P$ with $\text{rank}(Q)=\text{rank}(P)$, $\tau_Q=\tau_P|_Q$, and $F:\Lambda_0(Q)=\{\varnothing\}\to k+1$. We then let $i=F(\varnothing)$.  Note that for each $t\in \text{Leaves}(Q)$, by definition of $f$ and the properties of $F$, $f(t)=i$, so $t\in M_i$.  Thus $\text{Leaves}(Q)\subset Q\cap M_i$. Since $M_i\subset \text{Leaves}(P)$ and $\tau_Q=\tau_P|_Q$, $Q\cap M_i\subset \text{Leaves}(P)\cap Q= \text{Leaves}(Q)$, so $\text{Leaves}(Q)= Q\cap M_i$. 

\label{dory}

\end{rem}

Some results in this section will use induction to prove a result on trees having rank equal to an additively indecomposable ordinal. Thus in order to apply an inductive hypothesis, we need to find within a given tree subtrees whose ranks are additively indecomposable. We also wish to choose these subtrees in  a way which preserves separation.      Our next two lemmas describe how to do this.

\begin{lemma} Let $\gamma,\ee$ be ordinals such that $\gamma$ is additively indecomposable and either $\gamma=1$ or $\gamma=\omega^{\omega^{\ee_0}}\cdot \ldots \cdot \omega^{\omega^{\ee_l}}$ and $\ee_0\geqslant \ldots \geqslant \ee_l\geqslant \ee$.    Suppose $P$ is a tree with $\text{\emph{rank}}(P)=\gamma\cdot \omega^{\omega^{\ee}}$. Then there exist a subset $R$ of $\text{\emph{Roots}}(P)$ and collections $\{\eta_t:t\in R\}\subset \omega^{\omega^\ee}$ and $\{s_t: t\in R\}\subset P$ such that $\sup_{t\in R}\eta_t=\omega^{\omega^\ee}$,  $s_t\in \text{\emph{Leaves}}( P[t]^{\gamma\cdot \eta_t})$ for each $t\in R$, and  \begin{enumerate}[(i)]\item if $\ee=0$, $\eta_t=n_t+1$ for some $n_t<\omega$, \item if $\ee=\delta+1$, $\eta_t=(\omega^{\omega^\delta})^{n_t}$ for some $n_t<\omega$, and \item if $\ee$ is a limit ordinal, $\eta_t=\omega^{\omega^{\ee_t}}$ for some $\ee_t<\ee$. \end{enumerate}

\label{grow3}
\end{lemma}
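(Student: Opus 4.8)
The plan is to peel off the roots of $P$. Write $P=\amalg_{t\in\text{Roots}(P)}P[t]$; by Proposition \ref{ted}$(vi)$, $\gamma\cdot\omega^{\omega^\ee}=\text{rank}(P)=\sup_{t\in\text{Roots}(P)}\text{rank}(P[t])$, while Proposition \ref{ted}$(viii)$ gives $\text{rank}(P[t])=\tau_P(t)+1$ for each root $t$. For a root $t$, set $\delta_t:=\tau_{P,\gamma}(t)=\max\{\xi:t\in P^{\gamma\cdot\xi}\}$, which by Proposition \ref{ted}$(viii)$ is the largest ordinal with $\gamma\cdot\delta_t\leqslant\tau_P(t)$. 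Since $\tau_P(t)<\text{rank}(P)=\gamma\cdot\omega^{\omega^\ee}$ and $\gamma\geqslant1$, we get $\delta_t<\omega^{\omega^\ee}$; and since for each $\mu<\omega^{\omega^\ee}$ there is, because $\gamma\cdot\mu<\text{rank}(P)=\sup_t\text{rank}(P[t])$, a root $t$ with $\text{rank}(P[t])>\gamma\cdot\mu$, i.e.\ $\tau_P(t)\geqslant\gamma\cdot\mu$ and $\delta_t\geqslant\mu$, we conclude $\sup_{t\in\text{Roots}(P)}\delta_t=\omega^{\omega^\ee}$. Finally, whenever $\eta\leqslant\delta_t$ we have $t\in P^{\gamma\cdot\eta}$, so $P[t]^{\gamma\cdot\eta}=P^{\gamma\cdot\eta}[t]$ is a nonempty well-founded tree and possesses a leaf by Proposition \ref{easy1}.

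Next I would fix, according to the case, the \emph{admissible} values for the $\eta_t$: the ordinals $1,2,3,\dots$ in case $(i)$; the ordinals $(\omega^{\omega^\delta})^n$, $n<\omega$, in case $(ii)$; and the ordinals $\omega^{\omega^{\ee'}}$, $\ee'<\ee$, in case $(iii)$. In each case the admissible values form a strictly increasing family with supremum $\omega^{\omega^\ee}$. Let $R$ consist of those roots $t$ admitting some admissible value $\leqslant\delta_t$, and for $t\in R$ let $\eta_t$ be the largest admissible value $\leqslant\delta_t$. Such a largest value exists: in cases $(i)$ and $(ii)$ the admissible values $\leqslant\delta_t$ form a proper initial segment of an $\omega$-indexed family (proper because $\delta_t<\omega^{\omega^\ee}$), hence a finite set with a maximum; in case $(iii)$ the least $\ee'<\ee$ with $\omega^{\omega^{\ee'}}>\delta_t$ cannot be a limit ordinal $\lambda$, for otherwise $\omega^{\omega^{\ee''}}\leqslant\delta_t$ for all $\ee''<\lambda$ would give $\omega^{\omega^\lambda}=\sup_{\ee''<\lambda}\omega^{\omega^{\ee''}}\leqslant\delta_t$, a contradiction, so it is a successor $\ee_t+1$ and $\omega^{\omega^{\ee_t}}$ is the largest admissible value $\leqslant\delta_t$. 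In all three cases this exhibits $\eta_t$ in the form demanded by $(i)$, $(ii)$, $(iii)$, and since $\eta_t\leqslant\delta_t$ the first paragraph lets me choose $s_t\in\text{Leaves}(P[t]^{\gamma\cdot\eta_t})$.

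It then remains to check $\sup_{t\in R}\eta_t=\omega^{\omega^\ee}$. Every $\eta_t$ is admissible, hence $<\omega^{\omega^\ee}$. Conversely, given any admissible $\eta$, we have $\gamma\cdot\eta<\gamma\cdot\omega^{\omega^\ee}=\sup_t\text{rank}(P[t])$, so there is a root $t$ with $\text{rank}(P[t])>\gamma\cdot\eta$, i.e.\ $\tau_P(t)\geqslant\gamma\cdot\eta$ and $\delta_t\geqslant\eta$; thus $t\in R$ and $\eta_t\geqslant\eta$. Letting $\eta$ run through all admissible values gives $\sup_{t\in R}\eta_t\geqslant\omega^{\omega^\ee}$, hence equality, and in particular $R\neq\varnothing$. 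The one point genuinely requiring a small argument is, in case $(iii)$, the existence of a largest admissible value $\leqslant\delta_t$ --- equivalently, that the admissible exponents below $\ee$ form an initial segment of $\ee$ with a maximum --- which comes down to the continuity of the normal function $\ee'\mapsto\omega^{\omega^{\ee'}}$ at limit ordinals; everything else is routine manipulation of the derivatives via Proposition \ref{ted} and Proposition \ref{easy1}.
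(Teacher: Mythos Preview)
Your proof is correct and follows essentially the same approach as the paper's: in each case you take $R$ to be the roots with sufficiently large $\delta_t=\tau_{P,\gamma}(t)$ and let $\eta_t$ be the maximal admissible value below $\delta_t$, which is exactly what the paper does case by case using $\text{rank}(P[t])$ in place of $\tau_P(t)+1$. Your uniform ``admissible values'' packaging is a bit cleaner, and your cofinality argument for $\sup_{t\in R}\eta_t=\omega^{\omega^\ee}$ replaces the paper's per-case contradiction arguments, but the content is identical.
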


\begin{proof} In case $(i)$, note that $\omega^{\omega^\ee}=\omega$.   Let $R=\{t\in \text{Roots}(P): \text{rank}(P[t])>\gamma\}$ and for $t\in R$, let $n_t=\max\{n<\omega: \text{rank}(P[t])>\gamma\cdot (n+1)\}$ and let $\eta_t=n_t+1$. For $t\in R$, the set in the definition of $\eta_t$ is non-empty by the definition of $R$. Note that since $\text{rank}(P[t]) \leqslant \text{rank}(P)=\gamma\cdot \omega$, the set in the definition of $\eta_t$ has a maximum, otherwise $t\in P[t]^{\gamma\cdot \omega}=P[t]^{\text{rank}(P)}=\varnothing$.    Let $S=\text{Roots}(P)\setminus R$.   Since $\text{rank}(P[t])>\gamma\cdot \eta_t$, $P[t]^{\gamma\cdot \eta_t}$ is a non-empty, well-founded tree.  Therefore it has at least one leaf, $s_t$.    It remains to show that $\sup_{t\in R}\eta_t=\omega$. If it were not so, there would be a finite bound $\sup_{t\in R}\eta_t=n<\omega$.   Then \begin{align*} P^{\gamma \cdot (n+1)} & = \Bigl(\amalg_{t\in S}P[t]^{\gamma \cdot (n+1)}\Bigr)\amalg \Bigl(\amalg_{t\in R} P[t]^{\gamma \cdot (n+1)}\Bigr) =\varnothing. \end{align*} The first set is empty because $\text{rank}(P[t])\leqslant \gamma$ for each $t\in S$, while the second set is empty because $\text{rank}(P[t])\leqslant \gamma\cdot (\eta_t+1) \leqslant \gamma\cdot (n+1)$ for each $t\in R$.     But this implies that $\text{rank}(P)\leqslant \gamma\cdot (n+1)<\gamma\cdot \omega$, which contradicts our original hypothesis.

Cases $(ii)$ and $(iii)$ are similar to case $(i)$.   In case $(ii)$, we note that $\gamma\cdot \omega^{\omega^{\delta+1}}=\sup_{n<\omega} \gamma \cdot (\omega^{\omega^\delta})^n$. We let $R=\{t\in \text{Roots}(P): \text{rank}(P[t])>\gamma\}$, $S=\text{Roots}(P)\setminus R$,  $n_t=\max \{n<\omega: \text{rank}(P[t])>\gamma\cdot (\omega^{\omega^\delta})^n\}$, and $\eta_t=(\omega^{\omega^\delta})^{n_t}$. For $t\in R$, the set in the definition of $n_t$ is non-empty by the definition of $R$.     Since $\text{rank}(P[t])\leqslant \text{rank}(P)=\gamma\cdot (\omega^{\omega^\delta})^\omega$, the set in the definition of $n_t$ has a maximum.  We choose a leaf $s_t\in P[t]^{\gamma\cdot \eta_t}$ as before.  As in the previous paragraph, if $\sup_{t\in R} \eta_t<\omega^{\omega^\ee}$ (equivalently, if $\sup_{t\in R}n_t<\omega$), then we could find $n<\omega$ such that $\sup_{t\in R} \eta_t< (\omega^{\omega^\delta})^n$. As above, we could deduce that $P^{\gamma\cdot (\omega^{\omega^\delta})^{n+1}}=\varnothing$ and $\text{rank}(P)\leqslant \gamma\cdot (\omega^{\omega^\delta})^{n+1}<\gamma\cdot \omega^{\omega^\ee}$, which is not true. Therefore $\sup_{t\in R}\eta_t=\omega^{\omega^\ee}$.    

For case $(iii)$, we let $R=\{t\in \text{Roots}(P): \text{rank}(P[t])>\gamma\cdot \omega\}$,  $\ee_t=\max\{\eta: \text{rank}(P[t])>\gamma\cdot \omega^{\omega^\eta}\}$, and $\eta_t=\omega^{\omega^{\ee_t}}$. Note that for $t\in R$, it follows from the definition of $R$ that  the set in the definition of $\ee_t$ is non-empty.   We must justify that this set has a maximum. To do that, we first note that it has an upper bound of $\ee$, since $\text{rank}(P[t])\leqslant \text{rank}(P)=\gamma\cdot \omega^{\omega^\ee}$.    To see that this supremum is a maximum, we note that this is trivial if $\ee_t$ is zero or a successor, so assume $\ee_t$ is a limit ordinal. Note that for any ordinal $\beta$, $P[t]^\beta$ is either empty or contains $t$, whence $t\in P[t]^{\gamma\cdot \omega^{\omega^\eta}}$ for each $\eta<\ee_t$.   Then since $\{\gamma\cdot \omega^{\omega^\eta}: \eta<\ee_t\}$ is cofinal in $\gamma\cdot \omega^{\omega^{\ee_t}}$ and since $\gamma\cdot \omega^{\omega^{\ee_t}}$ is a limit ordinal,  by Proposition \ref{ted}$(i)$, $$t\in \bigcap_{\eta<\ee_t} P[t]^{\gamma\cdot \omega^{\omega^\eta}}=P[t]^{\gamma\cdot \omega^{\omega^{\ee_t}}}.$$  Thus $\text{rank}(P[t])>\gamma\cdot \omega^{\omega^{\ee_t}}$, and the supremum is a maximum.   This argument also shows that $\ee_t<\ee$, otherwise $t\in P[t]^{\gamma\cdot \omega^{\omega^\ee}}=\varnothing$. The choice of $s_t\in \text{Leaves}(P[t]^{\gamma\cdot \ee_t})$ is as in the previous cases.  We note that if $\sup_{t\in R}\ee_t=\eta<\ee$, then $$\text{rank}(P) \leqslant \max\Bigl\{\sup_{t\in \text{Roots}(P)\setminus R} \text{rank}(P[t]), \sup_{t\in R}\text{rank}(P[t])\Bigr\}\leqslant \max\{\gamma \cdot \omega, \gamma\cdot \omega^{\omega^\eta}\}<\gamma\cdot \omega^{\omega^\ee},$$ a contradiction.    Thus $\sup_{t\in R}\ee_t=\ee$ and $\sup_{t\in R}\eta_t=\sup_{t\in R} \omega^{\omega^{\ee_t}}= \omega^{\omega^\ee}$.

\end{proof}

\begin{lemma} Suppose $\gamma, \ee$ are ordinals such that $\gamma$ is additively indecomposable and either $\gamma=1$ or $\gamma=\omega^{\omega^{\ee_0}}\cdot \ldots \cdot \omega^{\omega^{\ee_l}}$ with $\ee_0\geqslant \ldots \geqslant \ee_l\geqslant \ee$.     Suppose  $P$ is a tree with $\text{\emph{rank}}(P)=\gamma\cdot \omega^{\omega^{\ee}}$ and $R\subset\text{\emph{Roots}}(P)$ is such that for each $t\in R$, we have an ordinal $\eta_t<\omega^{\omega^\ee}$ and $s_t\in \text{\emph{Leaves}}(P[t])^{\gamma\cdot \eta_t}$ such that $\sup_{t\in R}\eta_t=\omega^{\omega^\ee}$.   

\begin{enumerate}[(i)]\item If $\ee>0$, suppose also that we have a subset $M$ of $R$ such that $\sup_{t\in M}\eta_t=\omega^{\omega^\ee}$ and for each $t\in M$,  $\eta_t$ is an additively indecomposable ordinal and we have a subtree $Q_t$ of $P(s_t)$ with $\text{\emph{rank}}(Q_t)=\text{\emph{rank}}(P(s_t))=\gamma\cdot \eta_t$ such that for each $\eta<\eta_t$, $$Q_t^{\gamma\cdot \eta}\setminus Q_t^{\gamma \cdot (\eta+1)}=Q_t\cap (P^{\gamma\cdot \eta}\setminus P^{\gamma \cdot (\eta+1)})$$ and for each $(u,v)\in \Lambda_2(Q_t^{\gamma \cdot \eta}\setminus Q_t^{\gamma\cdot (\eta+1)})$, $$\varsigma_{Q_t^{\gamma \cdot \eta}\setminus Q_t^{\gamma\cdot (\eta+1)}}(u,v)=\varsigma_{P^{\gamma\cdot \eta}\setminus P^{\gamma\cdot (\eta+1)}}(u,v).$$  Then $Q:=\amalg_{t\in M}Q_t$ has $\text{\emph{rank}}(Q)=\text{\emph{rank}}(P)$ and $\varsigma_Q=\varsigma_P|_{\Lambda_2(Q)}$.

\item If $\ee=0$, suppose also that $\eta_t=n_t+1$ for some $n_t<\omega$ and that we have a subset $M$ of $R$ and $(m_t)_{t\in M}\subset \omega$ such that $\sup_{t\in M}m_t=\omega$ and for each $t\in M$, there exist $q(0)<\ldots <q(m_t)\leqslant n_t$ and a subset $Q_t$ of $P(s_t)$ with $\text{\emph{rank}}(Q_t)= \gamma\cdot (m_t+1)$ such that for each $i\leqslant m_t$, $$Q_t^{\gamma\cdot i}\setminus Q_t^{\gamma\cdot (i+1)} = Q_t \cap (P^{\gamma\cdot q(i)}\setminus P^{\gamma\cdot (q(i)+1)})$$ and for each $(u,v)\in \Lambda_2(Q_t^{\gamma\cdot i}\setminus Q_t^{\gamma \cdot (i+1)})$, $$\varsigma_{Q_t^{\gamma\cdot i}\setminus Q_t^{\gamma\cdot (i+1)}}(u,v)= \varsigma_{P^{\gamma \cdot q(i)}\setminus P^{\gamma \cdot (q(i)+1)}}(u,v).$$   Then $Q:=\amalg_{t\in M} Q_t$ has $\text{\emph{rank}}(Q)=\text{\emph{rank}}(P)$ and $\varsigma_Q=\varsigma_P|_{\Lambda_2(Q)}$. 

\end{enumerate}

\label{grow4}

\end{lemma}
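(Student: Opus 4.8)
\textbf{Proof proposal for Lemma \ref{grow4}.}

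The plan is to handle the two cases in parallel, since in both we are taking a totally incomparable union $Q=\amalg_{t\in M}Q_t$ where each $Q_t$ sits above $s_t$ in the corresponding root-subtree $P[t]$, and each $Q_t$ has already been arranged to respect the $\gamma$-block structure and the separation function of $P$ inside each $\gamma$-block. First I would compute $\text{rank}(Q)$. Since the $Q_t$, $t\in M$, are pairwise incomparable (they live above distinct roots of $P$), Proposition \ref{ted}$(vi)$ gives $Q^\zeta=\amalg_{t\in M}Q_t^\zeta$ for every $\zeta$, so $\text{rank}(Q)=\sup_{t\in M}\text{rank}(Q_t)$. In case $(i)$ this is $\sup_{t\in M}\gamma\cdot \eta_t=\gamma\cdot\sup_{t\in M}\eta_t=\gamma\cdot\omega^{\omega^\ee}=\text{rank}(P)$, using left-continuity of multiplication by $\gamma$ and the hypothesis $\sup_{t\in M}\eta_t=\omega^{\omega^\ee}$; in case $(ii)$ it is $\sup_{t\in M}\gamma\cdot(m_t+1)=\gamma\cdot\sup_{t\in M}(m_t+1)=\gamma\cdot\omega=\gamma\cdot\omega^{\omega^0}=\text{rank}(P)$, using $\sup_{t\in M}m_t=\omega$. (Here I also use $\gamma$ additively indecomposable so that $\gamma\cdot\omega^{\omega^\ee}$ is genuinely a limit of the $\gamma\cdot\eta_t$.)

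Next I would identify the $\gamma$-blocks of $Q$. From $Q^{\gamma\cdot\eta}=\amalg_{t\in M}Q_t^{\gamma\cdot\eta}$ we get
$Q^{\gamma\cdot\eta}\setminus Q^{\gamma\cdot(\eta+1)}=\amalg_{t\in M}\bigl(Q_t^{\gamma\cdot\eta}\setminus Q_t^{\gamma\cdot(\eta+1)}\bigr)$
for every ordinal $\eta$. In case $(i)$ the hypothesis says each summand equals $Q_t\cap(P^{\gamma\cdot\eta}\setminus P^{\gamma\cdot(\eta+1)})$ for $\eta<\eta_t$ (and is empty for $\eta\geqslant\eta_t$, since $\text{rank}(Q_t)=\gamma\cdot\eta_t$), so the whole block is $Q\cap(P^{\gamma\cdot\eta}\setminus P^{\gamma\cdot(\eta+1)})$. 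In case $(ii)$ the $i$-th block of $Q_t$ has been matched to the $q(i)$-th block of $P$; here I would need to be a little more careful, because the reindexing $i\mapsto q(i)$ depends on $t$, so $Q$ itself is not literally a union of $P$-blocks. The key observation is that for the purpose of computing $\varsigma_Q$ we only ever compare $(u,v)\in\Lambda_2(Q)$ with $u<v$, and such a pair lives entirely inside a single $Q_t$ (distinct $Q_t$'s are incomparable), hence inside a single $Q_t$-block $Q_t^{\gamma\cdot i}\setminus Q_t^{\gamma\cdot(i+1)}$, which in turn sits inside the single $P$-block $P^{\gamma\cdot q(i)}\setminus P^{\gamma\cdot q(i)+1}$.

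Finally I would verify $\varsigma_Q=\varsigma_P|_{\Lambda_2(Q)}$. Fix $(u,v)\in\Lambda_2(Q)$ and let $t\in M$ be the unique index with $u,v\in Q_t$. By the previous paragraph there is a unique block index (call it $i$) with $u,v\in Q_t^{\gamma\cdot i}\setminus Q_t^{\gamma\cdot(i+1)}$; in case $(i)$ the corresponding $P$-block is $P^{\gamma\cdot i}\setminus P^{\gamma\cdot(i+1)}$, in case $(ii)$ it is $P^{\gamma\cdot q(i)}\setminus P^{\gamma\cdot q(i)+1}$. Since $\gamma$ is additively indecomposable and equals $1$ or $\omega^{\omega^{\ee_0}}\cdot\ldots\cdot\omega^{\omega^{\ee_l}}$, Proposition \ref{division} applies to each of the trees $P$ and $Q$ at the level $\gamma$, giving $\varsigma_{P^{\gamma\cdot\eta}\setminus P^{\gamma\cdot(\eta+1)}}=\varsigma_P|_{\Lambda_2(\cdot)}$ and likewise $\varsigma_{Q^{\gamma\cdot i}\setminus Q^{\gamma\cdot(i+1)}}=\varsigma_Q|_{\Lambda_2(\cdot)}$. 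Moreover, since $Q_t^{\gamma\cdot i}\setminus Q_t^{\gamma\cdot(i+1)}$ is an incomparable-union summand of $Q^{\gamma\cdot i}\setminus Q^{\gamma\cdot(i+1)}$, Proposition \ref{ted}$(vi)$ gives $\tau_{Q_t^{\gamma\cdot i}\setminus Q_t^{\gamma\cdot(i+1)}}=\tau_{Q^{\gamma\cdot i}\setminus Q^{\gamma\cdot(i+1)}}|_{Q_t}$, hence $\varsigma_{Q_t^{\gamma\cdot i}\setminus Q_t^{\gamma\cdot(i+1)}}=\varsigma_{Q^{\gamma\cdot i}\setminus Q^{\gamma\cdot(i+1)}}|_{\Lambda_2(\cdot)}$. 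Chaining these equalities with the hypothesis $\varsigma_{Q_t^{\gamma\cdot i}\setminus Q_t^{\gamma\cdot(i+1)}}(u,v)=\varsigma_{P^{\gamma\cdot(\cdot)}\setminus P^{\gamma\cdot(\cdot)+1}}(u,v)$ yields $\varsigma_Q(u,v)=\varsigma_P(u,v)$, as desired. I expect the main obstacle to be the bookkeeping in case $(ii)$: making sure that the $t$-dependent reindexing $q(\cdot)$ causes no trouble, which is resolved precisely by the remark that any comparable pair is confined to one $Q_t$ and one block, so no cross-block or cross-$t$ comparison ever arises.
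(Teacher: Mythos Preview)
Your overall strategy matches the paper's: compute $\text{rank}(Q)$ from the incomparable union, then verify $\varsigma_Q=\varsigma_P|_{\Lambda_2(Q)}$ by locating any $(u,v)\in\Lambda_2(Q)$ in a single $Q_t$ and invoking Proposition~\ref{division}. However, there is a genuine gap. You write that a pair $(u,v)\in\Lambda_2(Q_t)$ ``lives \ldots\ inside a single $Q_t$-block $Q_t^{\gamma\cdot i}\setminus Q_t^{\gamma\cdot(i+1)}$,'' and later assert the existence of a unique block index $i$ with $u,v\in Q_t^{\gamma\cdot i}\setminus Q_t^{\gamma\cdot(i+1)}$. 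This is false in general: since $\text{rank}(Q_t)=\gamma\cdot\eta_t$ (case $(i)$) or $\gamma\cdot(m_t+1)$ (case $(ii)$) with $\eta_t,m_t$ typically $>1$, there are pairs $(u,v)\in\Lambda_2(Q_t)$ with $\tau_{Q_t,\gamma}(u)>\tau_{Q_t,\gamma}(v)$, i.e., $u$ and $v$ lie in \emph{different} $\gamma$-blocks of $Q_t$.

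This omitted case is exactly where the last coordinate of the separation appears. When $\gamma>1$ write $\text{rank}(P)=\text{rank}(Q)=\omega^{\omega^{\ee_0}}\cdots\omega^{\omega^{\ee_l}}\cdot\omega^{\omega^\ee}$, so $\lambda(\text{rank}(P))=l+2$ and the maximum value of $\varsigma_P,\varsigma_Q$ is $l+1$. If $(u,v)$ lie in different $\gamma$-blocks of $Q_t$ (hence of $Q$), then $\varsigma_Q(u,v)=l+1$; by the block hypotheses $Q_t^{\gamma\cdot i}\setminus Q_t^{\gamma\cdot(i+1)}\subset P^{\gamma\cdot\eta}\setminus P^{\gamma\cdot(\eta+1)}$ (with $\eta=i$ in case $(i)$, $\eta=q(i)$ in case $(ii)$), $u$ and $v$ also lie in different $\gamma$-blocks of $P$, so $\varsigma_P(u,v)=l+1$ as well. (When $\gamma=1$ the whole question is trivial since $\varsigma_P,\varsigma_Q\equiv 0$.) The paper's proof treats precisely this dichotomy---either a common $\gamma$-block exists and one chains through Proposition~\ref{division} as you do, or none exists and both separations take the value $l+1$. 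Once you add this missing case, your argument is complete and coincides with the paper's.
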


\begin{proof} $(i)$ Since $\text{rank}(Q_t)=\gamma\cdot \eta_t$, $$\text{rank}(Q)=\sup_{t\in M}\text{rank}(Q_t)=\sup_{t\in M} \gamma\cdot \eta_t=\gamma \cdot \omega^{\omega^\ee}=\text{rank}(P).$$

If $\gamma=1$, the condition that $\varsigma_Q=\varsigma_P|_{\Lambda_2(Q)}$ is trivial, since in this case $\text{rank}(Q)=\text{rank}(P)=\omega^{\omega^\ee}$, $\lambda(\gamma)=1$,  and $\varsigma_Q, \varsigma_P\equiv 0$. Suppose $1<\gamma$ and   $$\gamma=\omega^{\omega^{\ee_0}}\cdot \ldots \cdot \omega^{\omega^{\ee_l}}$$ with $\ee_0\geqslant \ldots \geqslant \ee_l\geqslant \ee$.         Then $\text{rank}(Q)=\text{rank}(P)=\omega^{\omega^{\ee_0}}\cdot \ldots \cdot \omega^{\omega^{\ee_l}}\cdot \omega^{\omega^\ee}$.   Note that for $(u,v)\in \Lambda_2(Q)$, $\varsigma_Q(u,v)\leqslant l$ if and only if there exists $\eta$ such that $u,v\in Q^{\gamma\cdot \eta}\setminus Q^{\gamma \cdot (\eta+1)}$, and otherwise $\varsigma_Q(u,v)=l+1$. Similarly, for $(u,v)\in \Lambda_2(P)$, $\varsigma_P(u,v)\leqslant l$ if and only if there exists $\eta$ such that $u,v\in P^{\gamma\cdot \eta}\setminus P^{\gamma \cdot (\eta+1)}$, and otherwise $\varsigma_P(u,v)=l+1$.

Now fix  $(u,v)\in \Lambda_2(Q)$. Since the union in the definition of $Q$ is incomparable, there exists $t\in M$ such that $u,v\in \Lambda_2(Q_t)$.  Moreover, since $Q$ is an incomparable union, there exists $\eta$ such that $u,v\in Q^{\gamma\cdot \eta}\setminus Q^{\gamma\cdot (\eta+1)}$ if and only if there exists $\eta$ such that $u,v\in Q^{\gamma \cdot \eta}_t\setminus Q_t^{\gamma \cdot (\eta+1)}$ if and only if there exists $\eta$ such that $u,v\in P^{\gamma\cdot \eta}\setminus P^{\gamma\cdot (\eta+1)}$, and if any (equivalently, all) of these three conditions holds, they hold with the same ordinal $\eta$ by the hypotheses of the lemma. In the case that such an ordinal $\eta$ exists, then $(u,v)\in \Lambda_2(Q_t^{\gamma \cdot \eta}\setminus Q_t^{\gamma \cdot (\eta+1)})\cap \Lambda_2(P^{\gamma\cdot \eta}\setminus P^{\gamma \cdot (\eta+1)})$ and $$\varsigma_Q(u,v)=\varsigma_{Q_t^{\gamma \cdot \eta}\setminus Q_t^{\gamma \cdot (\eta+1)}}(u,v)=\varsigma_{P^{\gamma \cdot \eta}\setminus P^{\gamma\cdot (\eta+1)}}(u,v)=\varsigma_P(u,v).$$   Here we are using Proposition \ref{division} for the first and last equality. Now if no such $\eta$ exists, then $\varsigma_Q(u,v)=l+1=\varsigma_P(u,v)$. Therefore $\varsigma_Q=\varsigma_P|_{\Lambda_2(Q)}$. 

$(ii)$ We have $$\text{rank}(P)\geqslant \text{rank}(Q)=\sup_{t\in M} \gamma\cdot (m_t+1)= \gamma\cdot \omega=\text{rank}(P).$$   We show that $\varsigma_Q=\varsigma_P|_{\Lambda_2(Q)}$.  Fix $(u,v)\in \Lambda_2(Q)$. The remainder of the argument is similar to $(i)$.  First,  $(u,v)\in \Lambda_2(Q_t)$ for some $t\in M$.  Then either there exists $i\leqslant m_t$ such that $u,v\in Q_t^{\gamma\cdot i}\setminus Q_t^{\gamma \cdot (i+1)}$, and therefore $u,v\in P^{\gamma \cdot q(i)}\setminus P^{\gamma \cdot (q(i)+1)}$. In this case, $$\varsigma_Q(u,v)=\varsigma_{Q_t^{\gamma \cdot i}\setminus Q_t^{\gamma \cdot (i+1)}}(u,v)= \varsigma_{P^{\gamma \cdot q(i)}\setminus P^{\gamma \cdot (q(i)+1)}}(u,v)=\varsigma_P(u,v).$$ If no such $i$ exists, then $\varsigma_Q(u,v)=\varsigma_P(u,v)=l+1$, the largest possible value of $\varsigma_P$ and $\varsigma_Q$.

\end{proof}

We are now ready to provide a complete characterization of $\mathfrak{R}_1$.   The  next result does not only yield that $\mathfrak{R}_1$ is simply the class of additively indecomposable ordinals, but provides further structural information regarding how we can choose a  monochromatic subtree to be positioned inside the original tree.

\begin{theorem} Let $\gamma$ be a non-zero ordinal and write $\gamma=\omega^{\ee_0}+\ldots +\omega^{\ee_l}$ with $\ee_0\geqslant \ldots \geqslant \ee_l$.  Let $\gamma_0=0$ and $\gamma_{i+1}=\gamma_i+\omega^{\ee_i}$ for $i=0, \ldots, l$.    For any tree $P$ with $\text{\emph{rank}}(P)=\gamma$, any $k<\omega$, and any $f:P\to k+1$, there exist a subtree $Q$ of $P$ and a function $F:l+1\to k+1$ such that $\text{\emph{rank}}(Q)=\text{\emph{rank}}(P)$ and for each $i\leqslant l$,  \begin{enumerate}[(i)]\item $Q^{\gamma_i}\setminus Q^{\gamma_{i+1}}=Q\cap (P^{\gamma_i}\setminus P^{\gamma_{i+1}})$, \item $\varsigma_{Q^{\gamma_i}\setminus Q^{\gamma_{i+1}}}=\varsigma_P|_{\Lambda_2(Q^{\gamma_i}\setminus Q^{\gamma_{i+1}})}$, \item $f|_{Q^{\gamma_i}\setminus Q^{\gamma_{i+1}}}\equiv F(i)$.   \end{enumerate}

\label{ds9}

\end{theorem}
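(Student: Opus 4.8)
The plan is to prove the statement by transfinite induction on $\gamma=\text{rank}(P)$, splitting the inductive step according to whether $\gamma$ is additively indecomposable ($l=0$) or not ($l\geqslant 1$); the two halves feed each other. The base case $\gamma=1$ is immediate: $P$ consists of incomparable leaves, so $Q=\{t\}$ for any $t\in P$ works with $F(0)=f(t)$, and (i), (ii) are vacuous since $\Lambda_2(Q)=\varnothing$.

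For the decomposable step ($l\geqslant 1$) I would peel off the top layer. Since $\gamma=\gamma_l+\omega^{\ee_l}$ we have $\text{rank}(P^{\gamma_l})=\omega^{\ee_l}<\gamma$ by Proposition~\ref{ted}(iii), so the inductive hypothesis applied to $P^{\gamma_l}$ (a single level) gives $D\subseteq P^{\gamma_l}$ with $\text{rank}(D)=\text{rank}(P^{\gamma_l})$, $\varsigma_D=\varsigma_{P^{\gamma_l}}|_{\Lambda_2(D)}$, and $f|_D\equiv c_l$. For each $t\in\text{Leaves}(D)$ pick (Propositions~\ref{easy1} and~\ref{ted}(vii)) a leaf $s_t$ of $P^{\gamma_l}$ with $t\leqslant s_t$; then $\text{rank}(P(s_t))=\gamma_l<\gamma$ (Proposition~\ref{ted}(ix)), and the inductive hypothesis applied to $P(s_t)$ produces $Q_t\subseteq P(s_t)$ and $F_t:l\to k+1$ realizing (i)--(iii) for $P(s_t)$. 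Using $P(s_t)^{\gamma_i}=P^{\gamma_i}\cap P(s_t)$ (Proposition~\ref{ted}(viii)) and the fact that, since $s_t\in P^{\gamma_{i+1}}$ for $i<l$, the set $(P^{\gamma_i}\setminus P^{\gamma_{i+1}})(s_t)$ has rank $\omega^{\ee_i}$ — hence, by Proposition~\ref{ted}(viii),(x), the same separation function as $P^{\gamma_i}\setminus P^{\gamma_{i+1}}$ — one checks the $Q_t$ meet the hypotheses of Lemma~\ref{grow1}. To make the $F_t$ agree, color $\text{Leaves}(D)$ by $t\mapsto F_t$ (finitely many values) and apply the $n=0$ case of Lemma~\ref{tek} (Remark~\ref{dory}) to pass to $D'\subseteq D$ with $\text{rank}(D')=\text{rank}(D)$ and $\tau_{D'}=\tau_D|_{D'}$ — hence $\varsigma_{D'}=\varsigma_{P^{\gamma_l}}|_{\Lambda_2(D')}$ — on all of whose leaves $F_t$ equals a fixed $F_*$. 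Now Lemma~\ref{grow1} glues $D'$ and $(Q_t)_{t\in\text{Leaves}(D')}$ into the desired $Q$, with $F(l)=c_l$ and $F(i)=F_*(i)$ for $i<l$.

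The additively indecomposable step ($l=0$, $\gamma=\omega^{\ee_0}>1$) is the heart of the matter. Write $\gamma=\alpha\cdot\omega^{\omega^{\ee}}$ where $\alpha$ is the product of all but the last multiplicative factor of $\gamma$ (so $\alpha=1$ exactly when $\gamma$ is multiplicatively indecomposable) and $\ee$ is the exponent of the last factor. Apply Lemma~\ref{grow3} to obtain a set $R$ of roots, ordinals $\eta_t<\omega^{\omega^{\ee}}$ with $\sup_{t\in R}\eta_t=\omega^{\omega^{\ee}}$, and $s_t\in\text{Leaves}(P[t]^{\alpha\cdot\eta_t})$, so $\text{rank}(P(s_t))=\alpha\cdot\eta_t<\gamma$. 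Apply the inductive hypothesis to each $P(s_t)$ and then pigeonhole over the finitely many colors to isolate a single value $c$ and a subset $M\subseteq R$ with $\sup_{t\in M}\eta_t=\omega^{\omega^{\ee}}$ on which the resulting subtrees are $c$-monochromatic. If $\alpha=1$, all separation functions are identically $0$, so it suffices to take the totally incomparable union $Q=\amalg_{t\in M}Q_t$ over distinct roots: then $\text{rank}(Q)=\sup_{t\in M}\text{rank}(Q_t)=\omega^{\omega^{\ee}}=\gamma$ and $f|_Q\equiv c$. If $\alpha>1$ one must reassemble while preserving $\varsigma_P$: in the successor and limit cases of Lemma~\ref{grow3} the rank $\alpha\cdot\eta_t$ is itself additively indecomposable with $\eta_t$ additively indecomposable, and Lemma~\ref{grow4}(i) performs the assembly (Proposition~\ref{division} identifying the separation function of an $\alpha$-block with the restriction of the ambient one); in the case $\ee=0$ one has $\text{rank}(P(s_t))=\alpha\cdot(n_t+1)$, so the inductive hypothesis makes $f$ constant on each of the $n_t+1$ levels, a further pigeonhole selects a color realized on an increasing number of levels, Lemma~\ref{grow2} passes to those levels, and Lemma~\ref{grow4}(ii) does the assembly.

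The main obstacle is precisely the interface between the inductive hypothesis and Lemma~\ref{grow4} when $\alpha>1$: Lemma~\ref{grow4}(i) demands not merely $\varsigma_{Q_t}=\varsigma_{P(s_t)}|$ but that $Q_t$ also respect the $\alpha$-derivative structure of $P$, i.e.\ $Q_t^{\alpha\cdot\eta}\setminus Q_t^{\alpha\cdot(\eta+1)}=Q_t\cap(P^{\alpha\cdot\eta}\setminus P^{\alpha\cdot(\eta+1)})$ for all $\eta$, together with compatibility of separations inside each such block — and bare separation-preservation does not force this. The remedy is to carry this extra compatibility as part of the statement proved by induction and to check that it is inherited through Lemmas~\ref{grow1},~\ref{grow2} and~\ref{grow4}: the incomparable-union computation of Proposition~\ref{ted}(vi) shows the $\alpha$-block structure of $\amalg_{t\in M}Q_t$ is governed by that of the individual $Q_t$, and Proposition~\ref{division} handles the within-block separations. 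Keeping this structural invariant aligned across the many pigeonhole steps, so that a single color (indecomposable case) or a single level-color function $F$ (decomposable case) survives, is the bulk of the technical work; the remaining points — that restriction to $P(s)$ or to $P^{\gamma\cdot\eta}\setminus P^{\gamma\cdot(\eta+1)}$ preserves separation, and that $\tau$-preservation implies $\varsigma$-preservation — are routine consequences of Section~2.
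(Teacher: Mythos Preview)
Your proposal is correct and mirrors the paper's proof: transfinite induction on $\gamma$, the $l>0$ case handled by peeling off the top level and gluing via Lemmas~\ref{tek} and~\ref{grow1}, and the $l=0$ case handled via the root decomposition of Lemma~\ref{grow3}, a pigeonhole, and reassembly by Lemma~\ref{grow4} (with the $\ee=0$ subcase requiring the extra pass through Lemma~\ref{grow2}). Your closing observation that Lemma~\ref{grow4} demands the $\alpha$-block alignment rather than mere $\varsigma$-preservation is apt; the paper is terse at exactly this juncture, and your instinct to carry that compatibility explicitly through the induction is the right one.
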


\begin{proof} First let us note that in the case $l=0$, item $(iii)$ yields that $f|_Q$ is constant.    Furthermore, item $(i)$ is trivial in this case. We prove the result by induction.   

Let $\gamma$ be a non-zero ordinal and let us assume the result holds for all non-zero ordinals less than $\gamma$.    Write $\gamma=\omega^{\ee_0}+\ldots +\omega^{\ee_l}$ with $\ee_0\geqslant \ldots \geqslant \ee_l$ and let $\gamma_i$ be as in the statement of the theorem.    

Let us first consider the case $l>0$.    Since $\gamma=\gamma_l+\omega^{\ee_l}$, $\text{rank}(P^{\gamma_l})=\omega^{\ee_l}$.  Since $\omega^{\ee_l}<\gamma$, the inductive hypothesis applied to $f|_{P^{\gamma_l}}$ and the first sentence of the proof together yield the existence of some $j\leqslant k$ and $R\subset P^{\gamma_l}$ such that $\text{rank}(R)=\text{rank}(P^{\gamma_l})=\omega^{\ee_l}$, $\varsigma_R=\varsigma_{P^{\gamma_l}}|_{\Lambda_2(R)}$, and $f|_R\equiv j$.    Now for each $t\in \text{Leaves}(R)$, fix some $s_t\in \text{Leaves}(P^{\gamma_l})$ such that $t\leqslant s_t$.   Since $\gamma_l=\omega^{\ee_0}+\ldots +\omega^{\ee_{l-1}}<\gamma$, the inductive hypothesis applied to $f|_{P(s_t)}$ yields the existence of some $Q_t\subset P(s_t)$ and some $F_t:l\to k+1$ such that $\text{rank}(Q_t)=\omega^{\ee_0}+\ldots +\omega^{\ee_{l-1}}$ and for each $i<l$, $f|_{Q_t^{\gamma_i}\setminus Q_t^{\gamma_{i+1}}}\equiv F_t(i)$, $Q_t^{\gamma_i}\setminus Q_t^{\gamma_{i+1}}=Q_t\cap (P^{\gamma_i}(s_t)\setminus P^{\gamma_{i+1}}(s_t))=Q_t\cap (P^{\gamma_i}\setminus P^{\gamma_{i+1}})$, and for each $(u,v)\in \Lambda_2(Q_t^{\gamma_i}\setminus Q_t^{\gamma_{i+1}})$, $$\varsigma_{Q_t^{\gamma_i}\setminus Q_t^{\gamma_{i+1}}}(u,v)=\varsigma_{P^{\gamma_i}(s_t)\setminus P^{\gamma_{i+1}}(s_t)}(u,v)= \varsigma_{P^{\gamma_i}\setminus P^{\gamma_{i+1}}}(u,v).$$   For the last equality, we use Proposition \ref{ted}$(ix)$ to deduce that $(P^{\gamma_i}\setminus P^{\gamma_{i+1}})(s_t)=P^{\gamma_i}(s_t)\setminus P^{\gamma_{i+1}}(s_t)$, so $$\tau_{P^{\gamma_i}\setminus P^{\gamma_{i+1}}}(u)=\tau_{P^{\gamma_i}(s_t)\setminus P^{\gamma_{i+1}}(s_t)}(u)$$ for all $u\in P(s_t)\cap (P^{\gamma_i}\setminus P^{\gamma_{i+1}})$.     Now for each $G:l\to k+1$, let $$M_G=\{t\in \text{Leaves}(R): F_t=G\}.$$  By Lemma \ref{tek}, there exist $G:l\to k+1$ and $D\subset R$ such that $\tau_D=\tau_R|_D$, $\text{Leaves}(D)=M_G\cap D$, and $\text{rank}(D)=\text{rank}(R)=\omega^{\ee_l}$.    Now define $F:l+1\to k+1$ by letting $F(i)=G(i)$ for $i<l$ and $F(l)=j$.   Let $$Q=D\cup \Bigl(\amalg_{t\in \text{Leaves}(D)}Q_t\Bigr).$$      It follows from Lemma \ref{grow1} that $\text{rank}(Q)=\text{rank}(P)$ and that items $(i)$ and $(ii)$ are satisfied.   Since $Q^{\gamma_l}=D\subset R$, $f|_{Q^{\gamma_l}\setminus Q^{\gamma_{l+1}}}\equiv j=F(l)$.    For $i<l$, if $u\in Q^{\gamma_i}\setminus Q^{\gamma_{i+1}}$, then there exists $t\in \text{Leaves}(D)\subset M_G$ such that $u\in Q_t^{\gamma_i}\setminus Q_t^{\gamma_{i+1}}$.   Then $f(u)=F_t(i)=G(i)=F(i)$.   This yields $(iii)$ and concludes the $l>0$ case.

Now we treat the $l=0$ case, in which case $\gamma=\omega^\mu$ for some ordinal $\mu$.   As usual, we will treat the $\mu=0$, $\mu$  a successor, and $\mu$ a limit cases.   The $\mu=0$ case is simple, since for any $P$ with $\text{rank}(P)=1$, $k<\omega$, and $f:P\to k+1$, we fix $t\in P$ and let $Q=\{t\}$ and $F(0)=f(t)$.   The conclusions are easily seen to be satisfied in this case.

Now suppose that $\mu=\nu+1$ and let $\delta=\omega^\nu$.  Fix $P$ with $\text{rank}(P)=\omega^\mu$, $k<\omega$, and $f:P\to k+1$.    By writing $\gamma=\delta \cdot \omega=\delta\cdot \omega^{\omega^0}$, Case $(i)$ of Lemma \ref{grow3} yields a subset $R$ of $\text{Roots}(P)$, $(s_t)_{t\in R}\subset P$, and $(n_t)_{t\in R}\subset \omega$ such that $s_t\in \text{Leaves}(P[t]^{\delta \cdot (n_t+1)})$ and $\sup_{t\in R} n_t=\omega$.    For each $t\in R$, let $T_t=P(s_t)$ and note that $\text{rank}(T_t)=\delta \cdot (n_t+1)$.     By applying the inductive hypothesis to the restriction of $f$ to $T_t$, we deduce the existence of some $F_t:n_t+1\to k+1$ and $S_t\subset T_t$ satisfying the conclusions of the theorem.     Now for $t\in R$ and $i\leqslant k$, let $$A_{t,i}=\{j\leqslant n_t: F_t(j)=i\}.$$  Note that there must exist some $j\leqslant k$ such that $\sup_{t\in R} |A_{t,j}|=\omega$.    We may find a subset, and indeed a countable subset, $M$ of $R$ such that, with $m_t=|A_{t,j}|-1$, $\sup_{t\in M} m_t=\omega$ and $m_t>0$ for all $t\in M$.    By Lemma \ref{grow2}, for each $t\in M$, we may find a subset $Q_t$ of $S_t$ such that $\text{rank}(Q_t)=\delta \cdot (m_t+1)$ and, writing $A_{t,j}=\{a(0), \ldots, a(m_t)\}$ with $a(0)<\ldots <a(m_t)$, for each $i\leqslant m_t$, $$Q_t^{\delta \cdot i}\setminus Q_t^{\delta \cdot (i+1)}=Q_t\cap (S_t^{\delta \cdot a(i)}\setminus S_t^{\delta \cdot (a(i)+1)})$$ and $$\varsigma_{Q_t^{\delta \cdot i}\setminus Q_t^{\delta \cdot (i+1)}}=\varsigma_{S_t^{\delta \cdot a(i)}\setminus S_t^{\delta \cdot (a(i)+1)}}|_{\Lambda_2(Q_t^{\delta \cdot i}\setminus Q_t^{\delta \cdot (i+1)})}.$$  It follows from the definition of $A_{t,j}$ and the properties of $F_t$ and $S_t$ that $f|_{Q_t}\equiv j$. Define $F(0)=j$.    Let $Q=\amalg_{t\in M}Q_t$.  Obviously item $(iii)$ is satisfied. The remaining requirements are satisfied by Case $(ii)$ of Lemma \ref{grow4}.

Finally, let us suppose that $\mu$ is a limit ordinal, say $\mu=\omega^{\delta_0}+\ldots +\omega^{\delta_m}$ with $\delta_0\geqslant \ldots \geqslant \delta_m$.  Let $\nu=\omega^{\delta_0}+\ldots +\omega^{\delta_{m-1}}$ if $m>0$, and otherwise let $\nu=0$.    Let $\delta=\delta_m$, so that $\omega^\mu=\omega^\nu\cdot \omega^{\omega^\delta}$.  Let us also note that $\delta>0$, otherwise $\omega^\delta=1$ and $\mu$ would be a successor ordinal.   Fix a tree $P$ with $\text{rank}(P)=\omega^\mu$, $k<\omega$, and $f:P\to k+1$.   By Lemma \ref{grow3}, there exist a subset $R\subset \text{Roots}(P)$, $\{s_t: t\in R\}\subset P$, $\{\eta_t: t\in R\}\subset \omega^{\omega^\delta}$ such that $\sup_{t\in R}\eta_t=\omega^{\omega^\delta}$ and for each $t\in R$, $s_t\in \text{Leaves}(P[t]^{\omega^\nu \cdot \eta_t})$.  Furthermore, since we are in either case $(ii)$ or case $(iii)$ of Lemma \ref{grow3}, the ordinals $\eta_t$ may be chosen in such a way that $\omega^\nu \cdot \eta_t$ is an additively  indecomposable ordinal.  For each $t\in R$, let $T_t=P(s_t)$.   By applying the inductive hypothesis to the restriction of $f$ to $T_t$, we deduce the existence of some $F_t:1\to k+1$ and $Q_t\subset T_t$ such that $\text{rank}(Q_t)=\text{rank}(T_t)=\omega^\nu\cdot \eta_t$, $\varsigma_{Q_t}=\varsigma_{T_t}|_{\Lambda_2(Q_t)}$, and $f|_{Q_t}\equiv F_t(0)$.  For each $i\leqslant k$, let $R_i=\{t\in R: F_t(0)=i\}$.     Since $$\text{rank}(P)=\sup_{t\in R}\omega^\nu\cdot \eta_t=\sup_{t\in R} \text{rank}(Q_t),$$ there exists $j\leqslant k$ such that $$\sup_{t\in R_j}\text{rank}(Q_t)=\text{rank}(P).$$  Let $Q=\amalg_{t\in R_j} Q_t$ and let $F(0)=j$.    Then by Case $(i)$ of Lemma \ref{grow4}, $\text{rank}(Q)=\text{rank}(P)$, and items $(i)$ and $(ii)$ are satisfied by this $Q$. It is immediate from the construction that $f|_Q\equiv j=F(0)$.

\end{proof}

\begin{theorem}     \begin{enumerate}[(i)]\item Let $\gamma$ be a non-zero ordinal and write $\gamma=\omega^{\ee_0}+\ldots +\omega^{\ee_l} $ with $\ee_0\geqslant \ldots \geqslant \ee_l$.    Let $\gamma_0=0$ and for $i<l$, let $\gamma_{i+1}=\gamma_i+\omega^{\ee_i}$.  Suppose $P$ is a tree with $\text{\emph{rank}}(P)= \gamma$, $k<\omega$, and $f:P\to k+1$ is a function.  Suppose $Q\subset P$ and $F:l+1\to k+1$ are as in Theorem \ref{ds9}.   Fix $j\leqslant k$ and let  $A=\{i\leqslant l: F(i)=j\}$ and $\alpha =\omega^{\ee_{a(0)}}+\ldots +\omega^{\ee_{a(p)}}$, where by convention $\alpha=0$ if $A=\varnothing$ and otherwise $A=\{a(0), \ldots, a(p)\}$ with $a(0)<\ldots <a(p)$.    Then there exists a subtree $R$ of $P$ with $\text{\emph{rank}}(R)=\alpha$  such that $f|_R\equiv j$.       \item For $k,l<\omega$, let $F:l+1\to k+1$ be a function.   Fix  $\ee_0\geqslant \ldots \geqslant \ee_l$ and let $\gamma_i$ be as in $(i)$.  Fix a tree $P$ with $\text{\emph{rank}}(P)=\omega^{\ee_0}+\ldots +\omega^{\ee_l}$ and define $f:P\to k+1$ by letting $f|_{P^{\gamma_i}\setminus P^{\gamma_{i+1}}}=F(i)$.     Then if $j\leqslant k$, $A=\{i\leqslant l: F(i)=j\}$, $\alpha=\omega^{\ee_{a(0)}}+\ldots +\omega^{\ee_{a(p)}}$, and $Q$ is any subtree of $P$ such that $f|_Q\equiv j$, then $\text{\emph{rank}}(Q) \leqslant \alpha$. Here we obey the same conventions on $\alpha$ and $A$ as in $(i)$.   \end{enumerate}

\label{megatron}
\end{theorem}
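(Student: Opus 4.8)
The plan is to treat the two parts separately. Part (i) will come almost for free from Theorem \ref{ds9} together with Lemma \ref{grow2}, while part (ii) reduces to a short transfinite bookkeeping argument about the derivatives of the monochromatic subtree, which is in a sense a converse companion to Lemma \ref{grow2}.

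For (i), I would set $R=\bigcup_{i\in A}\bigl(Q^{\gamma_i}\setminus Q^{\gamma_{i+1}}\bigr)$. Then $R\subset Q\subset P$, and by Theorem \ref{ds9}(iii) each node of $R$ lies in a level $Q^{\gamma_i}\setminus Q^{\gamma_{i+1}}$ with $i\in A$, on which $f\equiv F(i)=j$; hence $f|_R\equiv j$. If $A=\varnothing$ then $R=\varnothing$, so $\text{rank}(R)=0=\alpha$. If $A=\{a(0)<\ldots<a(p)\}$ is nonempty, then since $\text{rank}(Q)=\gamma=\omega^{\ee_0}+\ldots+\omega^{\ee_l}$ (part of the conclusion of Theorem \ref{ds9}), Lemma \ref{grow2} applied to the tree $Q$ in place of $P$, with this same $A$, says precisely that $R=\bigcup_{i\in A}(Q^{\gamma_i}\setminus Q^{\gamma_{i+1}})$ has rank $\omega^{\ee_{a(0)}}+\ldots+\omega^{\ee_{a(p)}}=\alpha$. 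Note that the separation hypotheses on $Q$ are not used here.

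For (ii), I would first note that since $\text{rank}(P)=\gamma$, the sets $P^{\gamma_i}\setminus P^{\gamma_{i+1}}$ for $0\leqslant i\leqslant l$ (with $\gamma_{l+1}=\gamma$) partition $P$; by the definition of $f$ this yields $f^{-1}(\{j\})=\bigcup_{i\in A}\bigl(P^{\gamma_i}\setminus P^{\gamma_{i+1}}\bigr)$, so any $Q$ with $f|_Q\equiv j$ is contained in that union. It then suffices to prove the purely ordinal claim: whenever $Q\subset\bigcup_{i\in A}\bigl(P^{\gamma_i}\setminus P^{\gamma_{i+1}}\bigr)$, one has $\text{rank}(Q)\leqslant\alpha$. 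This is clear if $A=\varnothing$. Otherwise, writing $A=\{a(0)<\ldots<a(p)\}$, $\alpha_{-1}=0$, and $\alpha_m=\omega^{\ee_{a(0)}}+\ldots+\omega^{\ee_{a(m)}}$ for $0\leqslant m\leqslant p$ (so $\alpha_p=\alpha$), I would prove by induction on $m\in\{-1,0,\ldots,p\}$ that
$$Q^{\alpha_m}\subset\bigcup_{i\in A,\ i>a(m)}\bigl(P^{\gamma_i}\setminus P^{\gamma_{i+1}}\bigr),$$
where for $m=-1$ the condition $i>a(m)$ is read as $i\in A$. The base case $m=-1$ is the containment hypothesis on $Q$. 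For the inductive step, put $S=Q^{\alpha_{m-1}}$, so by the hypothesis at $m-1$ we have $S\subset\bigcup_{i\in A,\,i\geqslant a(m)}(P^{\gamma_i}\setminus P^{\gamma_{i+1}})\subset P^{\gamma_{a(m)}}$; then Proposition \ref{ted}(iii) gives $S^{\omega^{\ee_{a(m)}}}\subset(P^{\gamma_{a(m)}})^{\omega^{\ee_{a(m)}}}=P^{\gamma_{a(m)}+\omega^{\ee_{a(m)}}}=P^{\gamma_{a(m)+1}}$, while from $\alpha_m=\alpha_{m-1}+\omega^{\ee_{a(m)}}$ and Proposition \ref{ted}(iii) again, $Q^{\alpha_m}=S^{\omega^{\ee_{a(m)}}}$. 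Hence $Q^{\alpha_m}\subset S\cap P^{\gamma_{a(m)+1}}$, and intersecting the containment for $S$ with $P^{\gamma_{a(m)+1}}$ deletes exactly the $i=a(m)$ term (the levels are pairwise disjoint, and $P^{\gamma_i}\subset P^{\gamma_{a(m)+1}}$ for $i>a(m)$), giving the claim for $m$. Applying this with $m=p$ gives $Q^\alpha=\varnothing$, i.e. $\text{rank}(Q)\leqslant\alpha$.

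The only step calling for real care is the induction in (ii): one has to keep straight which exponent $\ee_{a(m)}$ and which thresholds $\gamma_{a(m)},\gamma_{a(m)+1}$ are involved, and to apply the identity $(P^\beta)^\gamma=P^{\beta+\gamma}$ (Proposition \ref{ted}(iii)) in the two different ways indicated; note that this argument works uniformly whether or not $A$ is an interval of $\{0,\ldots,l\}$. Beyond that, the reduction in (ii) to the level partition of $P$, and the entirety of (i), are immediate consequences of Theorem \ref{ds9} and Lemma \ref{grow2}.
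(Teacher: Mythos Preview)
Your proof is correct and follows essentially the same approach as the paper: for (i) you both select $R$ from $Q$ via Lemma \ref{grow2}, and for (ii) you both run an induction over the indices of $A$ showing $Q^{\alpha_m}$ (the paper's $Q^{\beta_{m+1}}$) is contained in the union of the remaining levels. Your inductive step for (ii), using the containment $S\subset P^{\gamma_{a(m)}}$ together with the identity $(P^\beta)^\gamma=P^{\beta+\gamma}$, is in fact a bit more explicit than the paper's sketch, which defers to an argument ``similar to that in Lemma \ref{grow2}''.
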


\begin{proof}$(i)$ We choose the subtree $R$ from $Q$ using the set $A$ and Lemma \ref{grow2}.

$(ii)$    If $A=\varnothing$, the result follows by our established convention on $\alpha$ and the fact that $Q=\varnothing$ if $A=\varnothing$.     Assume $A\neq \varnothing$ and write $A=\{a(0), \ldots, a(p)\}$ with $a(0)<\ldots <a(p)$.    It is clear that $Q\subset \cup_{i\in A}P^{\gamma_i}\setminus P^{\gamma_{i+1}}=\cup_{i=0}^p P^{\gamma_{a(i)}}\setminus P^{\gamma_{a(i)+1}}$.   Let $\beta_0=0$ and $\beta_{i+1}=\beta_i+\omega^{\ee_{a(i)}}$ for $i=0, \ldots, p$.     An easy induction argument on $j$ yields that for each $j\leqslant p$,  $$Q^{\beta_j}\subset \bigcup_{i=j}^p P^{\gamma_{a(i)}}\setminus P^{\gamma_{a(i)+1}}.$$   This is because $\text{rank}(P^{\gamma_{a(i)}}\setminus P^{\gamma_{a(i)+1}})=\omega^{\ee_{a(i)}}$ and for any ordinal $\zeta$ and $j\leqslant p$, any leaf in $(P^{\gamma_{a(j)}}\setminus P^{\gamma_{a(j)+1}})^\zeta$ is also a leaf in $$\Bigl(\bigcup_{i=j+1}^p P^{\gamma_{a(i)}}\setminus P^{\gamma_{a(i)+1}}\Bigr)\cup (P^{\gamma_{a(j)}}\setminus P^{\gamma_{a(j)+1}})^\zeta.$$  An argument similar to that in Lemma \ref{grow2} yields the induction argument on $j$.    Now apply the argument with $j=p$ and note that $\alpha=\beta_{p+1}$, $Q^\alpha=Q^{\beta_{p+1}}=\varnothing$. 

\end{proof}

We have already discussed the easy inclusion of the next corollary, and the reverse inclusion is an immediate consequence of Theorem \ref{ds9}. 

\begin{corollary} The class $\mathfrak{R}_1$ is the class of additively indecomposable ordinals.

\label{up}
\end{corollary}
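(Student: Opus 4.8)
The plan is to prove the two inclusions separately; since almost all the work is already done, the argument is short.

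\textbf{Additively indecomposable $\supseteq \mathfrak{R}_1$.} I would reproduce the obstruction from the preamble to Section~3. Suppose $\gamma>0$ is \emph{not} additively indecomposable, and write $\gamma=\omega^{\ee_0}+\ldots+\omega^{\ee_l}$ with $\ee_0\geqslant\ldots\geqslant\ee_l$ and $l>0$; put $\gamma_0=0$ and $\gamma_{i+1}=\gamma_i+\omega^{\ee_i}$. For an arbitrary tree $P$ with $\text{rank}(P)=\gamma$, partition $P$ into the levels $P^{\gamma_i}\setminus P^{\gamma_{i+1}}$, $i\leqslant l$, and define $f:P\to l+1$ by $f|_{P^{\gamma_i}\setminus P^{\gamma_{i+1}}}\equiv i$. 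If $Q\subset P$ and $f|_Q$ is constant, then $Q$ lies in a single level $P^{\gamma_i}\setminus P^{\gamma_{i+1}}$, so by Proposition~\ref{ted}$(x)$ we get $\text{rank}(Q)\leqslant\omega^{\ee_i}<\gamma=\text{rank}(P)$. Hence there is no monochromatic subtree of full rank, so $\gamma\notin\mathfrak{R}_1$; contrapositively, every member of $\mathfrak{R}_1$ is additively indecomposable. (Identifying $P$ with $\Lambda_1(P)$ as the text does, this is literally a statement about $\Lambda_1$.)

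\textbf{Additively indecomposable $\subseteq \mathfrak{R}_1$.} Let $\gamma$ be additively indecomposable, so $\gamma=\omega^\xi$ for some ordinal $\xi$. Then the non-increasing representation of $\gamma$ as a sum of additively indecomposable ordinals is the single term $\gamma=\omega^\xi$; that is, $l=0$ in the notation of Theorem~\ref{ds9}, with $\gamma_0=0$ and $\gamma_1=\gamma$. Fix any tree $P$ with $\text{rank}(P)=\gamma$, any $k<\omega$, and any $f:P\to k+1$ (equivalently $f:\Lambda_1(P)\to k+1$). Theorem~\ref{ds9} yields a subtree $Q$ with $\text{rank}(Q)=\text{rank}(P)=\gamma$ and a function $F:1\to k+1$ with $f|_{Q^{\gamma_0}\setminus Q^{\gamma_1}}\equiv F(0)$. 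Since $\text{rank}(Q)=\gamma$, we have $Q^{\gamma_1}=Q^\gamma=\varnothing$, so $Q^{\gamma_0}\setminus Q^{\gamma_1}=Q$ and $f|_Q\equiv F(0)$ is constant. Thus $\gamma\in\mathfrak{R}_1$, and the two inclusions together give the corollary.

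I do not expect any genuine obstacle here: the substantive content is entirely inside Theorem~\ref{ds9}, and this corollary is merely the specialization to $l=0$, combined with the elementary level-coloring that blocks monochromaticity when $\gamma$ is additively decomposable. The only bookkeeping points — the identification of a coloring of $P$ with a coloring of $\Lambda_1(P)$, and the fact that $Q\setminus Q^\gamma=Q$ once $\text{rank}(Q)=\gamma$ — are immediate from the definitions in Section~2.
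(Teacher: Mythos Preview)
Your proposal is correct and follows exactly the paper's approach: the easy inclusion is the level-coloring obstruction from Section~3, and the reverse inclusion is the $l=0$ case of Theorem~\ref{ds9}. The paper's own proof says precisely this in a single sentence, deferring both directions to earlier material.
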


\begin{rem}\upshape That $\mathfrak{R}_1$ is simply the class of additively indecomposable ordinals is related to certain  partition problems in the spirit of those introduced by Erd\H{o}s and Rado \cite{ER}, and the topological partition problem solved by Baumgartner \cite{B}. For topological spaces $X,Y$, the relation $X\rightarrow (Y)_2^1$ denotes that if $X$ is partitioned into two subsets, one of the subsets contains a homeomorphic copy of $Y$.  Baumgartner showed that for a countable ordinal $\alpha$, the topological relation $\alpha\rightarrow (\alpha)_2^1$ holds if and only if and only if $\alpha=\omega^{\omega^\beta}$ for some $\beta$. We note that the Cantor-Bendixson index of the ordinal interval $\omega^\xi$ is $\xi$, so the Cantor-Bendixson index of $\omega^{\omega^\beta}$ is $\omega^\beta$. Thus the countable ordinals satisfying the topological relation $\alpha\rightarrow (\alpha)_2^1$ are precisely those whose Cantor-Bendixson index is an additively indecomposable ordinal. This is in close analogy to the above characterization of $\mathfrak{R}_1$ as the class of the additively indecomposable ordinals.

 Recall that the definition of a tree is a partially ordered set $(P, \leqslant )$ such that for each $t\in P$, the ancestor set $A_P[t]=\{s\in P: s\leqslant t\}$ is well-ordered. Since we are concerned only with well-founded trees, we have limited our study to those trees such that $A_P[t]$ is finite and linearly ordered for all $t\in P$, but we now recall the more general definition so that we may compare Theorem \ref{ds9}.      Given a tree $(P, \leqslant )$, we may define $P(0)$ to be the set of minimal members.  Assuming $P(\zeta)$ has been defined for each $\zeta<\xi$, we define $P(\xi)$ to be the set of minimal members of $P\setminus \cup_{\zeta<\xi} P(\zeta)$ if this set is non-empty, and otherwise $P(\xi)=\varnothing$.   We then define the \emph{height} of $P$ to be the minimum $\xi$ such that $P(\xi)=\varnothing$.    If $P$ is a well-ordered set (that is, a tree with a single branch), the height of $P$ is simply the order type.  Let $\mathfrak{S}_1$ denote the class of non-zero ordinals $\zeta$ such that for any finite partition of $\zeta$, there exists at least one member of the partition with order type $\zeta$. Erd\H{o}s and Rado showed in \cite{ER} that $\mathfrak{S}_1$ is simply the class of additively indecomposable ordinals, which coincides with $\mathfrak{R}_1$.   Let $\mathfrak{T}_1$ denote the class of non-zero ordinals $\zeta$ such that for any finite partition of a tree with height $\zeta$, one of the members of the partition has height $\zeta$.    Since every well-ordered set is a tree, and in fact a tree with a single branch, $\mathfrak{T}_1\subset \mathfrak{S}_1$. It is not immediate from the definition that $\mathfrak{S}_1\subset \mathfrak{T}_1$, since if $\zeta$ is a limit ordinal, a tree of height $\zeta$ need not have a well-ordered subset with order type $\zeta$. Indeed, for a limit ordinal $\zeta$, if $P=\amalg_{\xi<\zeta}P_\xi$ is a totally incomparable union of trees such that $P_\xi$ is well-ordered with order type $\xi$, $P$ has no well-ordered subset of order type $\zeta$.   However, for a limit ordinal $\zeta$, any tree $P$ with height $\zeta$ has well-ordered subsets of order type $\xi$ for each $\xi<\zeta$. From this one can easily deduce from the fact that $\mathfrak{S}_1$ is the class of additively indecomposable ordinals that $\mathfrak{T}_1$ is also the class of additively indecomposable ordinals. Thus $\mathfrak{R}_1=\mathfrak{S}_1=\mathfrak{T}_1$.

One may ask whether the characterization of $\mathfrak{R}_1$ implies the characterization of $\mathfrak{T}_1$, or if the characterization of $\mathfrak{T}_1$ implies the characterization of $\mathfrak{R}_1$. However, they appear to be distinct.   The height of a tree is distinct from the rank of the tree, as the height is obtained by successive removals of the minimal remaining members, while the rank is defined by successive removals of the maximal remaining members. The height and rank of a tree are equal if either one is finite, and in this case they are each equal to the maximum cardinality of a linearly ordered subset. However, one can see that any well-founded tree (that is, one for which the rank is an ordinal), the height cannot exceed $\omega$.  Of course, one may ask whether these notions are related through the reverse order of a given order. That is, given a partially ordered set $(P, \leqslant )$, we may define $\leqslant^*$ on $P$ by $s\leqslant^* t$ if and only if $t\leqslant s$. Since this reverses the roles of maximal and minimal members of a partially ordered set, one may wonder if the apparently distinct facts that $\mathfrak{R}_1$ and $\mathfrak{T}_1$ are each equal to the class of additively indecomposable ordinals may be related by considering the reverse of the order on a given tree.  However, given a tree $(P, \leqslant)$, $(P, \leqslant^*)$ need not be a tree.   Thus it seems that these facts are genuinely distinct.

Further distinguishing Ramsey results involving rank from those involving height is the solution to the problem for pairs rather than singletons. We wish to thank François Dorais for bringing the following argument to our attention. Recall that a cardinal number $\kappa$ is \emph{weakly compact} if it is uncountable and for any function $f:[\kappa]^2\to 2$, there exists a subset $S$ of $\kappa$ with cardinality $\kappa$ such that $f|_{[S]^2}$ is constant. Recall that $[\kappa]^2$ denotes the set of two element subsets of $\kappa$.  Recall that weakly compact cardinals are large cardinals, whose existence cannot be proved from the standard axioms of set theory.   Let us denote by $\mathfrak{S}_2$ the class of all non-zero ordinals $\xi$ such that for any well-ordered set $X$ with order type $\xi$ and any  function $f:[X]^2\to 2$, there exists a subset $Y$ of $X$ with order type $\xi$ such that $[Y]^2$ is constant.  First, if $\xi\in \mathfrak{S}_2$, then $\xi$ must be a cardinal. Indeed, if $\xi$ is an ordinal whose cardinality is $\kappa$, we may fix a bijection $g:\xi\to \kappa$ and define $f(\{\alpha, \beta\})=0$ if the order of $\{\alpha, \beta\}$ is preserved by $g$ and $f(\{\alpha, \beta\})=1$ if the order of $\{\alpha, \beta\}$ is reversed by $g$.   Then if $Y\subset \xi$ is such that $f|_{[Y]^2}$ is constant, then $g|_Y$ is increasing if $f|_{[Y]^2}\equiv  0$, which means the order type of $\xi$ is not more than, and is therefore equal to, that of $\kappa$, or $g|_Y$ is decreasing if $f|_{[Y]^2}\equiv 1$, in which case $Y$ is finite. Now that we have demonstrated that the members of $\mathfrak{S}_2$ must be cardinals, it follows from the definition of weakly compact cardinals and Theorem \ref{Ramsey}$(ii)$ that $\mathfrak{S}_2$ consists precisely of $1,2, \omega$, and the weakly compact cardinals.  This is a drastically different class than $\mathfrak{R}_2$, the multiplicatively indecomposable ordinals. 

\end{rem}

We conclude this section with an application of our characterization of $\mathfrak{R}_1$, which will be necessary for our results on pairs.  As we have already explained, we think of a tree of rank $\omega^\ee\cdot (n+1)$ as consisting of $n+1$ levels $P^{\omega^\ee \cdot i}\setminus P^{\omega^\ee \cdot (i+1)}$, $i=0, \ldots, n$.   Our goal will be to begin with a tree of rank $\omega^\ee\cdot (n+1)$ and a coloring of linearly ordered pairs in this tree and to pass to a subtree $Q$ of rank $\omega^\ee\cdot (p+1)$, where $p$ depends upon $n$ and the number of colors used, in such a way that every pair in $\Lambda_2(Q)$ whose members lie on different levels of $Q$ receive the same color.    In what follows, $n+1$ is endowed with its usual order, so $\Lambda_2(n+1)$ is identifiable with $[n+1]^2$, the two element subsets of $n+1$. 

\begin{lemma} Let $\gamma=\omega^{\ee_0}+\ldots +\omega^{\ee_l}$ with $\ee_0\geqslant \ldots \geqslant \ee_l$.  Let $\gamma_0=0$ and $\gamma_{i+1}=\gamma_i+\omega^{\ee_i}$ for $i=0, \ldots, l$.      For a tree $P$ with $\text{\emph{rank}}(P)=\gamma$ and a function $f:\Lambda_2(P)\to k+1$, there exist $Q\subset P$ with $\text{\emph{rank}}(Q)=\text{\emph{rank}}(P)$ and a function $G:\Lambda_2(l+1)\to k+1$ such that for all $i\leqslant l$, \begin{enumerate}[(i)]\item $Q^{\gamma_i}\setminus Q^{\gamma_{i+1}}=Q\cap (P^{\gamma_i}\setminus P^{\gamma_{i+1}})$, \item if $(s,t)\in \Lambda_2(Q)$ and $i,j\leqslant l$ are such that $i<j$, $s\in Q^{\gamma_j}\setminus Q^{\gamma_{j+1}}$, and  $t\in Q^{\gamma_i}\setminus Q^{\gamma_{i+1}}$, then $f(s,t)= G(i,j),$  \item $\varsigma_{Q^{\gamma_i}\setminus Q^{\gamma_{i+1}}}(s,t)=\varsigma_{P^{\gamma_i}\setminus P^{\gamma_{i+1}}}(s,t)$ for all $(s,t)\in \Lambda_2(Q^{\gamma_i}\setminus Q^{\gamma_{i+1}})$. \end{enumerate} 

\label{kotr}

\end{lemma}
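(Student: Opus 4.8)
The plan is to prove Lemma \ref{kotr} by induction on $l$, mirroring the structure of the proof of Theorem \ref{ds9}: handle the ``top level'' $P^{\gamma_l}$ and the ``shadows'' $P(s_t)$ for leaves $t$ of a stabilized subtree of $P^{\gamma_l}$ separately, and glue the pieces back together with Lemma \ref{grow1}. The point is that the new data to be stabilized at each step is a finite coloring of \emph{leaves} (of the subtree sitting inside $P^{\gamma_l}$), so Lemma \ref{tek} (in its $n=0$ set form, Remark \ref{dory}) does the work.

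First I would dispose of the base case $l=0$: there $\Lambda_2(0+1)=\varnothing$, so $(ii)$ is vacuous, $(i)$ is trivial, and $(iii)$ asks only $\varsigma_Q=\varsigma_P|_{\Lambda_2(Q)}$, which holds on the nose with $Q=P$. For the inductive step, assume the result for all lengths $<l+1$. Given $P$ with $\text{rank}(P)=\gamma$ and $f:\Lambda_2(P)\to k+1$, note $\text{rank}(P^{\gamma_l})=\omega^{\ee_l}$ by Proposition \ref{ted}$(iii)$, and $\omega^{\ee_l}$ has the one-term expansion corresponding to length $0$; apply the inductive hypothesis to $f|_{\Lambda_2(P^{\gamma_l})}$ to get $R\subset P^{\gamma_l}$ with $\text{rank}(R)=\omega^{\ee_l}$ and $\varsigma_R=\varsigma_{P^{\gamma_l}}|_{\Lambda_2(R)}$ (the coloring output is trivial here). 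For each $t\in\text{Leaves}(R)$ pick $s_t\in\text{Leaves}(P^{\gamma_l})$ with $t\leqslant s_t$; then $\text{rank}(P(s_t))=\gamma_l=\omega^{\ee_0}+\ldots+\omega^{\ee_{l-1}}$ by Proposition \ref{ted}$(ix)$. Apply the inductive hypothesis to $f|_{\Lambda_2(P(s_t))}$ to obtain $Q_t\subset P(s_t)$ with $\text{rank}(Q_t)=\gamma_l$, a function $G_t:\Lambda_2(l)\to k+1$, and properties $(i)$--$(iii)$ relative to $P(s_t)$; as in the proof of Theorem \ref{ds9}, Proposition \ref{ted}$(ix)$ converts the separation/derivative statements on $P(s_t)$ into the corresponding statements on $P$ (since $(P^{\gamma_i}\setminus P^{\gamma_{i+1}})(s_t)=P^{\gamma_i}(s_t)\setminus P^{\gamma_{i+1}}(s_t)$).

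Now comes the genuinely new stabilization. For each $t\in\text{Leaves}(R)$ I must also record, for each $i<l$, the color $f(u,t)$ where $u$ ranges over $Q_t\cap(P^{\gamma_i}\setminus P^{\gamma_{i+1}})$; but this is not yet constant in $u$. The clean way is: after fixing $t$, the function $u\mapsto f(u,s_t)$ (or $f(u,t)$; note $s_t$ and $t$ are comparable leaves above $u$ in distinct levels, and $f(u,\cdot)$ is what we care about for the pair $(u,s)$ with $s$ near $t$) is a coloring of the members of $Q_t$, so by Theorem \ref{ds9} applied to $Q_t$ we may shrink $Q_t$ to a subtree, still of rank $\gamma_l$ and still level-compatible and separation-preserving, on which $f(u,s_t)$ depends only on the level index $i$ of $u$; call that color $H_t(i)$. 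Similarly, by Lemma \ref{tek} ($n=1$ case, i.e. $E_1$) applied to the function $(u,v)\mapsto f(u,v)$ on $\Pi(Q_t)$-type data, one arranges that the color $f(u,v)$ for $u$ in level $i$ of $Q_t$ and $v$ the leaf above does not depend on $v$. Combining, each $t\in\text{Leaves}(R)$ now carries a finite packet of data: the function $G_t:\Lambda_2(l)\to k+1$ and the function $H_t:l\to k+1$ recording cross-level colors into the top level. Since there are only finitely many such packets, apply the $n=0$ form of Lemma \ref{tek} (Remark \ref{dory}) to $R$ to find $D\subset R$ with $\text{rank}(D)=\omega^{\ee_l}$, $\tau_D=\tau_R|_D$, on whose leaves all packets agree — say $G_t\equiv \bar G$ and $H_t\equiv \bar H$. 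Set $Q=D\cup(\amalg_{t\in\text{Leaves}(D)}Q_t)$ and define $G:\Lambda_2(l+1)\to k+1$ by $G(i,j)=\bar G(i,j)$ for $i<j<l$ and $G(i,l)=\bar H(i)$ for $i<l$. By Lemma \ref{grow1}, $\text{rank}(Q)=\gamma$ and items $(i)$, $(iii)$ hold; item $(ii)$ holds because a pair $(s,t)\in\Lambda_2(Q)$ split across levels $i<j$ lies either entirely inside some $Q_t$ (when $j<l$, handled by $\bar G$ via the inductive hypothesis) or has $t\in D=Q^{\gamma_l}$ and $s$ in level $i$ of some $Q_{t'}$ with $t'\leqslant t$ (when $j=l$), in which case $f(s,t)=\bar H(i)=G(i,l)$ by the stabilization of the cross-level colors.

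The main obstacle is bookkeeping rather than depth: one must be careful that the cross-level stabilization on each $Q_t$ (color $f(u,s_t)$ depending only on level index) and the leaf-independence stabilization (Lemma \ref{tek}) can be performed \emph{simultaneously} with preserving rank, level structure, and separation — this requires applying Theorem \ref{ds9} and Lemma \ref{tek} in the right order so that each shrinking step preserves the conclusions of the previous one, exactly as is done in the proof of Theorem \ref{ds9}. A subtle point worth spelling out: when $t\in D$ and $s\in Q$ lies strictly below $t$, then $s$ lies in some $Q_{t'}$ with $t'\in\text{Leaves}(D)$ and $t'\leqslant s_{t'}$; but $t$ need not equal $t'$ — one needs $s_{t'}\leqslant t$ or rather that $s$'s level index is what controls the color, which is why the cross-level color $H_{t'}(i)$ must be recorded as data attached to $t'$ and stabilized over $\text{Leaves}(D)$, and why one shows $f(s,t)$ for $t\in Q^{\gamma_l}$, $s$ in level $i$, equals $f(s,s_{t'})=\bar H(i)$ using that $f(s,\cdot)$ was made independent of the leaf/top-level node above $s$. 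Once this is arranged the verification of $(ii)$ is a routine case split, and $(i)$, $(iii)$ follow directly from Lemma \ref{grow1}.
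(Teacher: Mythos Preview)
Your inductive scaffolding (base case $l=0$, the top--bottom decomposition, and gluing via Lemma \ref{grow1}) matches the paper's, and items $(i)$ and $(iii)$ would indeed follow from Lemma \ref{grow1} once the pieces are in place. The gap is in the cross-level stabilization needed for item $(ii)$ when $j=l$.

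Your $H_t(i)$ records only $f(s_t, u)$ (writing the ancestor first, as in the paper's convention) for the single node $s_t$, and your subsequent Lemma \ref{tek} application is on $\Pi(Q_t)$, whose leaves lie in level $0$ of $P$, not in $P^{\gamma_l}$. After stabilizing the packet $(G_t, H_t)$ over $\text{Leaves}(D)$ you therefore know $f(s_{t'}, u)=\bar H(i)$ for each leaf $t'$ of $D$ and each $u$ at level $i$ of $Q_{t'}$, but item $(ii)$ demands $f(s,u)=G(i,l)$ for \emph{every} $s\in D$ with $s\leqslant t'$, including the non-leaves of $D$. No step you perform controls $f(s,u)$ for such $s$; the sentence ``$f(s,\cdot)$ was made independent of the leaf/top-level node above $s$'' is precisely the missing conclusion, not something already established.

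The paper closes this gap by encoding, for each leaf $t$ of the top tree $E$, the whole vector $b_t(u)=(f(s,u))_{s\in A_E[t]}\in (k+1)^{A_E[t]}$ and applying Theorem \ref{ds9} to $b_t$ on $R_t$; this yields $B_t:m+1\to (k+1)^{A_E[t]}$ with $f(s,u)=B_t(i)(s)$ for $u$ at level $i$. One then defines $c(s,t)=(B_t(i)(s))_{i\leqslant m}$ on $\Pi(E)$ and applies Lemma \ref{tek} with $n=1$ \emph{to $E$} (not to $Q_t$) to make $c(s,t)$ depend only on $s$; a final application of Theorem \ref{ds9} to the resulting $C:F\to (k+1)^{m+1}$ makes it constant $\equiv a$, so that $f(s,u)=a(i)$ for all $s\in D$ and $u$ at level $i$. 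The essential point you are missing is that one must track $f(s,u)$ for \emph{all} ancestors $s\in A_E[t]$ simultaneously, not only for the leaf itself.
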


\begin{proof} In the proof, for sets $S,T$ and $b\in S^T$, we use $b(t)$ to denote the coordinates of $b$. That is, $b=(b(t))_{t\in T}$.

We prove by induction on $l<\omega$ that if $\gamma$ is an ordinal which can be written $\gamma=\omega^{\ee_0}+\ldots +\omega^{\ee_l}$ with $\ee_0\geqslant \ldots \geqslant\ee_l$, then for any $P$ with $\text{rank}(P)=\gamma$, $k<\omega$, and $f:\Lambda_2(P)\to k+1$, there exist $Q\subset P$ and $G:\Lambda_2(l+1)\to k+1$ as in the lemma.   If $l=0$, for any $P,k,f$, we let $Q=P$.  Then item $(ii)$ is vacuous in this case, and the other items are trivially verified.

Assume $0<l<\omega$ and the result holds for $m=l-1$.    Let $\gamma=\omega^{\ee_0}+\ldots +\omega^{\ee_l}$ with $\ee_0\geqslant \ldots \geqslant \ee_l$.  Let $\gamma_i$ be as in the  lemma.    For each $t\in \text{Leaves}(P^{\gamma_l})$, the inductive hypothesis applied to $P(t)$ yields the existence of some $R_t\subset P(t)$ with $\text{rank}(R_t)=\gamma_l=\text{rank}(P(t))$ and $G_t:\Lambda_2(m+1)\to k+1$ as in the lemma.    Now for each function $H:\Lambda_2(m+1)\to k+1$, let $$M_H=\{t\in \text{Leaves}(P^{\gamma_l}): G_t=H\}.$$   Then by Lemma \ref{tek}, there exist $H:\Lambda_2(m+1)\to k+1$ and a subset $E$ of $P^{\gamma_l}$ such that $\tau_E=\tau_{P^{\gamma_l}}|_E$, $\text{rank}(E)=\omega^{\ee_l}$, and $\text{Leaves}(E)=M_H\cap E$.

Now let us recall the definition of the ancestor set $A_E[t]=\{s\in E: s\leqslant t\}$.   For each $t\in \text{Leaves}(E)$, let us define $$b_t:R_t\to (k+1)^{A_E[t]}$$ by $$b_t(u)=(f(s,u))_{s\in A_E[t]}.$$  Since $(k+1)^{A_E[t]}$ is finite, by Theorem \ref{ds9}, there exist $B_t:m+1\to (k+1)^{A_E[t]}$ and $Q_t\subset R_t$ such that $\text{rank}(Q_t)=\text{rank}(R_t)$ and for each $i\leqslant m$, $Q_t^{\gamma_i}\setminus Q_t^{\gamma_{i+1}}=Q_t\cap (R^{\gamma_i}_t\setminus R^{\gamma_{i+1}}_t)$, $\varsigma_{Q^{\gamma_i}_t\setminus Q^{\gamma_{i+1}}_t}=\varsigma_{R_t^{\gamma_i}\setminus R_t^{\gamma_{i+1}}}|_{\Lambda_2(Q_t^{\gamma_i}\setminus Q_t^{\gamma_{i+1}})}$, and $b_t|_{Q_t^{\gamma_i}\setminus Q_t^{\gamma_{i+1}}}\equiv B_t(i)$.     Before proceeding, let us give some explanation for what we have accomplished so far. The tree $Q_t$, having the same rank as $P(t)$, which is $\gamma_l=\omega^{\ee_0}+\ldots +\omega^{\ee_{l-1}}$, has $l$ levels.   Passing from $R_t$ to $Q_t$ has made it such that, for each ancestor $s$ of $t$ and each $u\in Q_t$, $f(s,u)$ does not depend on $u$, but only on the $i\leqslant m$ such that $u\in Q_t^{\gamma_i}\setminus Q_t^{\gamma_{i+1}}$.  Namely, if $B_t(i)=(B_t(i)(s))_{s\in A_E[t]}\in (k+1)^{A_E[t]}$, then for each $u\in Q_t^{\gamma_i}\setminus Q_t^{\gamma_{i+1}}$, $f(s,u)=B_t(i)(s)$.

Now let us recall that $\Pi(E)=\{(s,t)\in E\times \text{Leaves}(E): s\leqslant t\}$.  Let us define a function $c:\Pi(E)\to (k+1)^{m+1}$ in the following way: For $t\in \text{Leaves}(E)$ and $i\leqslant m$, let $B_t(i)=(B_t(i)(s))_{s\in A_E[t]}$ as in the previous paragraph.     Then for $s\leqslant t$, define $$c(s,t)= (B_t(i)(s))_{i\leqslant m}.$$    By Lemma \ref{tek}, there exist a subset $F$ of $E$ and a function $C:F\to (k+1)^{m+1}$ such that $\tau_F=\tau_E|_F$, $\text{rank}(F)=\text{rank}(E)=\omega^{\ee_l}$, and for each $(s,t)\in \Pi(F)\subset \Pi(E)$, $C(s)=c(s,t)$.  Note  that $\tau_F=\tau_E|_F$ implies that $\varsigma_F=\varsigma_E|_{\Lambda_2(F)}=\varsigma_P|_{\Lambda_2(F)}$.      By Theorem \ref{ds9}, there exist $a\in (k+1)^{m+1}$ and a subset $D$ of $F$ such that $\text{rank}(D)=\text{rank}(F)$, $\varsigma_D=\varsigma_F|_{\Lambda_2(D)}$, and $C|_D\equiv a$.

  Now for each $t\in \text{Leaves}(D)$, fix $s_t\in \text{Leaves}(F)\subset M_H$ such that $t\leqslant s_t$.     Let $$Q=D\cup \Bigl(\amalg_{t\in \text{Leaves}(D)} Q_{s_t}\Bigr).$$   Define $G:\Lambda_2(l+1)\to k+1$ as follows: Let $G(i,j)=a(i)$ if $j=l$, and let $G(i,j)=H(i,j)$ if $j<l$.

 We last undertake the  process of verifying the requirements. Let us note that $$\varsigma_D=\varsigma_F|_{\Lambda_2(D)}=\varsigma_E|_{\Lambda_2(D)}=\varsigma_{P^{\gamma_l}}|_{\Lambda_2(D)}$$ and for each $t\in \text{Leaves}(D)$ and $i\leqslant m$, $$Q_t^{\gamma_i}\setminus Q_t^{\gamma_{i+1}}=Q_t\cap (P^{\gamma_i}\setminus P^{\gamma_{i+1}})$$ and $$\varsigma_{Q_t^{\gamma_i}\setminus Q_t^{\gamma_{i+1}}}=\varsigma_{P^{\gamma_i}\setminus P^{\gamma_{i+1}}}|_{\Lambda_2(Q_t^{\gamma_i}\setminus Q_t^{\gamma_{i+1}})}.$$  Then the fact that $\text{rank}(Q)=\text{rank}(P)$ and items $(i)$ and $(iii)$ follow from Lemma \ref{grow1}.     Now suppose that $(s,u)\in \Lambda_2(Q)$ and $i< j< l$ are such that $s\in Q^{\gamma_j}\setminus Q^{\gamma_{j+1}}$ and $u\in Q^{\gamma_i}\setminus Q^{\gamma_{i+1}}$.    Then  there exists $t\in \text{Leaves}(D)$ such that $s\in    Q^{\gamma_j}_{s_t}\setminus Q^{\gamma_{j+1}}_{s_t}$ and $u\in Q^{\gamma_i}_{s_t}\setminus Q^{\gamma_{i+1}}_{s_t}$.   It follows from the properties of $Q_t$ that $s\in R_{s_t}^{\gamma_j}\setminus R_{s_t}^{\gamma_{j+1}}$ and $u\in R_{s_t}^{\gamma_i}\setminus R_{s_t}^{\gamma_{i+1}}$.   Since $s_t\in  M_H$,  $f(s,u)=H(i,j)=G(i,j)$.

Last, suppose that $(s,u)\in \Lambda_2(Q)$ and $i<l$ are such that $s\in Q^{\gamma_l}\setminus Q^{\gamma_{l+1}}=D$ and $u\in Q^{\gamma_i}\setminus Q^{\gamma_{i+1}}$.   Then there exists $t\in \text{Leaves}(D)$ such that $s\leqslant t\leqslant s_t<u$.     Since $(s,s_t)\in \Pi(F)$,  $u\in Q^{\gamma_i}\setminus Q^{\gamma_{i+1}}$, and  $(s,u)\in \Lambda_2(Q)$, it follows that $u\in Q_{s_t}^{\gamma_i}\setminus Q_{s_t}^{\gamma_{i+1}}$ and $$f(s,u)=b_t(u)(s)=B_t(i)(s)=c(s,t)(i)=C(s)(i)=a(i)=G(i,l).$$

\end{proof}

\begin{corollary} Fix $p,k< \omega$.   Let $r=r(p,1,k)< \omega$ be the number from Theorem \ref{Ramsey}$(ii)$. Then for any additively indecomposable ordinal  $\gamma$, any tree $P$  with $\text{\emph{rank}}(P)=\gamma \cdot (r+1)$, and any function $f:\Lambda_2(P)\to k+1$,  there exist $j\leqslant k$, $R\subset P$ with $\text{\emph{rank}}(R)=\gamma\cdot (p+1)$, and a subset $\{q(0), \ldots, q(p)\}\subset r+1$ with $q(0)<\ldots <q(p)$ such that for $(s,t)\in \Lambda_2(R)$, \begin{enumerate}[(i)]\item if $\tau_{R, \gamma}(s)>\tau_{R,\gamma}(t)$, $f(s,t)=j$,  \item for each $i\leqslant p$, $R^{\gamma\cdot i}\setminus R^{\gamma \cdot (i+1)}=R\cap (P^{\gamma\cdot q(i)}\setminus P^{\gamma \cdot (q(i)+1)})$, \item for each $i\leqslant p$, $\varsigma_{R^{\gamma \cdot i}\setminus R^{\gamma\cdot (i+1)}}=\varsigma_{P^{\gamma\cdot q(i)}\setminus P^{\gamma\cdot (q(i)+1)}}|_{\Lambda_2(R^{\gamma \cdot i}\setminus R^{\gamma\cdot (i+1)})}$.  \end{enumerate}

\label{CD}
\end{corollary}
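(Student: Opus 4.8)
The plan is to chain together Lemma~\ref{kotr}, the finite Ramsey theorem for pairs (Theorem~\ref{Ramsey}$(ii)$ with $n=1$), and Lemma~\ref{grow2}. Since $\gamma$ is additively indecomposable we have $\gamma=\omega^\ee$ for some ordinal $\ee$, and $\text{rank}(P)=\gamma\cdot(r+1)=\omega^\ee+\ldots+\omega^\ee$ ($r+1$ summands). Thus $P$ falls under Lemma~\ref{kotr} with $l=r$ and $\gamma_i=\gamma\cdot i$. Applying that lemma to $f$ yields $Q\subset P$ with $\text{rank}(Q)=\gamma\cdot(r+1)$ such that $Q^{\gamma\cdot i}\setminus Q^{\gamma\cdot(i+1)}=Q\cap(P^{\gamma\cdot i}\setminus P^{\gamma\cdot(i+1)})$ and $\varsigma_{Q^{\gamma\cdot i}\setminus Q^{\gamma\cdot(i+1)}}=\varsigma_{P^{\gamma\cdot i}\setminus P^{\gamma\cdot(i+1)}}|_{\Lambda_2(Q^{\gamma\cdot i}\setminus Q^{\gamma\cdot(i+1)})}$ for all $i\leqslant r$, together with a function $G:\Lambda_2(r+1)\to k+1$ with the property that whenever $(s,t)\in\Lambda_2(Q)$, $a<b\leqslant r$, $s\in Q^{\gamma\cdot b}\setminus Q^{\gamma\cdot(b+1)}$ and $t\in Q^{\gamma\cdot a}\setminus Q^{\gamma\cdot(a+1)}$, one has $f(s,t)=G(a,b)$.

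Next I would regard $G$ as a coloring of $[r+1]^2$ via the identification of $\Lambda_2(r+1)$ with $[r+1]^2$. Since $r=r(p,1,k)$ is the Ramsey number of Theorem~\ref{Ramsey}$(ii)$ for pairs, there are $j\leqslant k$ and a subset $\{q(0),\ldots,q(p)\}\subset r+1$ with $q(0)<\ldots<q(p)$ on which $G$ is constant, i.e. $G(q(i'),q(i))=j$ for all $i'<i\leqslant p$.

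Now set $R=\bigcup_{i=0}^{p}\bigl(Q^{\gamma\cdot q(i)}\setminus Q^{\gamma\cdot(q(i)+1)}\bigr)$. This is precisely the tree produced by Lemma~\ref{grow2} applied to $Q$ (whose rank decomposition again has all exponents equal to $\ee$) with $A=\{q(0),\ldots,q(p)\}$, so $\text{rank}(R)=\gamma\cdot(p+1)$, and for each $i\leqslant p$ we get $R^{\gamma\cdot i}\setminus R^{\gamma\cdot(i+1)}=Q^{\gamma\cdot q(i)}\setminus Q^{\gamma\cdot(q(i)+1)}$ and $\varsigma_{R^{\gamma\cdot i}\setminus R^{\gamma\cdot(i+1)}}=\varsigma_{Q^{\gamma\cdot q(i)}\setminus Q^{\gamma\cdot(q(i)+1)}}$. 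Item $(ii)$ of the corollary then follows by combining this with Lemma~\ref{kotr}$(i)$: the set $R^{\gamma\cdot i}\setminus R^{\gamma\cdot(i+1)}$ equals $Q\cap(P^{\gamma\cdot q(i)}\setminus P^{\gamma\cdot(q(i)+1)})$, and since this set is contained in $R$ while $R\subset Q$, it also equals $R\cap(P^{\gamma\cdot q(i)}\setminus P^{\gamma\cdot(q(i)+1)})$. Item $(iii)$ follows the same way from Lemma~\ref{kotr}$(iii)$. For item $(i)$: given $(s,t)\in\Lambda_2(R)$ with $\tau_{R,\gamma}(s)>\tau_{R,\gamma}(t)$, say $s\in R^{\gamma\cdot i}\setminus R^{\gamma\cdot(i+1)}$ and $t\in R^{\gamma\cdot i'}\setminus R^{\gamma\cdot(i'+1)}$ with $i'<i\leqslant p$, we get $s\in Q^{\gamma\cdot q(i)}\setminus Q^{\gamma\cdot(q(i)+1)}$ and $t\in Q^{\gamma\cdot q(i')}\setminus Q^{\gamma\cdot(q(i')+1)}$ with $q(i')<q(i)\leqslant r$; since $\Lambda_2(R)\subset\Lambda_2(Q)$, Lemma~\ref{kotr}$(ii)$ gives $f(s,t)=G(q(i'),q(i))=j$.

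The whole argument is bookkeeping; the two points to watch are the index translation — a level $i$ of $R$ sits at level $q(i)$ of $Q$ (and of $P$), so the inequality $i'<i$ must be pushed to $q(i')<q(i)$ before invoking Lemma~\ref{kotr}$(ii)$ — and the set identity $Q\cap(P^{\gamma\cdot q(i)}\setminus P^{\gamma\cdot(q(i)+1)})=R\cap(P^{\gamma\cdot q(i)}\setminus P^{\gamma\cdot(q(i)+1)})$ for item $(ii)$, which uses both $R\subset Q$ and $Q^{\gamma\cdot q(i)}\setminus Q^{\gamma\cdot(q(i)+1)}\subset R$. I do not expect any genuine obstacle beyond this.
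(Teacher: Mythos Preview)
Your proposal is correct and follows essentially the same route as the paper: apply Lemma~\ref{kotr} to get $Q$ and the level-coloring $G$, use the finite Ramsey number $r=r(p,1,k)$ to extract a monochromatic $(p+1)$-set $\{q(0),\ldots,q(p)\}$, and then invoke Lemma~\ref{grow2} on $Q$ with $A=\{q(0),\ldots,q(p)\}$ to obtain $R$. Your verification of items $(i)$--$(iii)$, including the small observation that $Q\cap(P^{\gamma\cdot q(i)}\setminus P^{\gamma\cdot(q(i)+1)})=R\cap(P^{\gamma\cdot q(i)}\setminus P^{\gamma\cdot(q(i)+1)})$ and the index translation $i\mapsto q(i)$ before appealing to Lemma~\ref{kotr}$(ii)$, is in fact slightly more explicit than the paper's own write-up.
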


\begin{proof} Let $\gamma$, $P$, $f$ be as in the statement.  By Lemma \ref{kotr}, there exist $Q\subset P$ and $G:\Lambda_2(r+1)\to k+1$ such that $\text{rank}(Q)=\gamma\cdot (r+1)$, for each $i\leqslant p$, $Q^{\gamma\cdot i}\setminus Q^{\gamma\cdot (i+1)}=Q\cap (P^{\gamma\cdot i}\setminus P^{\gamma\cdot (i+1)})$ and $\varsigma_{Q^{\gamma\cdot i}\setminus Q^{\gamma\cdot (i+1)}}(s,t)=\varsigma_{P^{\gamma\cdot i}\setminus P^{\gamma\cdot (i+1)}}(s,t)$ for all $(s,t)\in \Lambda_2(Q^{\gamma\cdot i}\setminus Q^{\gamma\cdot (i+1)})$, and such that if $(s,t)\in \Lambda_2(Q)$ with $\tau_{Q, \gamma}(s)>\tau_{Q, \gamma}(t)$, then $f(s,t)=G(\tau_{Q,\gamma}(t), \tau_{Q, \gamma}(s))$. 

By the definition of $r$, there exist $j\leqslant k$ and  a subset $S$ of $r+1$ with cardinality $p+1$, which we denote by $\{q(0), \ldots, q(p)\}$ with $q(0)<\ldots <q(p)$, such that $G|_{\Lambda_2(S)}\equiv j$.   By Lemma \ref{grow2}, there exists a subset $R$ of $Q$ with $\text{rank}(R)=\gamma \cdot (p+1)$ such that for each $i\leqslant p$, $$R^{\gamma\cdot i}\setminus R^{\gamma\cdot (i+1)}= R\cap (Q^{\gamma \cdot q(i)}\setminus Q^{\gamma \cdot (q(i)+1)})$$ and $\varsigma_{R^{\gamma\cdot i}\setminus R^{\gamma\cdot (i+1)}}(s,t)=\varsigma_{Q^{\gamma \cdot q(i)}\setminus Q^{\gamma \cdot (q(i)+1)}}(s,t)$ for all $(s,t)\in \Lambda_2(R^{\gamma\cdot i}\setminus R^{\gamma\cdot (i+1)})$.      We verify property $(i)$.  We note that since $R^{\gamma\cdot i}\setminus R^{\gamma\cdot (i+1)}= R\cap (Q^{\gamma \cdot q(i)}\setminus Q^{\gamma \cdot (q(i)+1)})$, $q(\tau_{R, \gamma}(s))=\tau_{Q, \gamma}(s)$ for each $s\in R$. From this and the properties of $Q$ and $G$, if $(s,t)\in \Lambda_2(R)$ with $\tau_{R, \gamma}(s)>\tau_{R, \gamma}(t)$, then $\tau_{Q, \gamma}(s)>\tau_{Q,\gamma}(t)$ and $$f(s,t)=G(\tau_{Q, \gamma}(t), \tau_{Q, \gamma}(s))=G(q(\tau_{R,\gamma}(t)), q(\tau_{R, \gamma}(s)))=j.$$

\end{proof}

\section{Contractions and sparseness}

 Let $\gamma=\omega^{\omega^{\ee_0}}\cdot \ldots\cdot \omega^{\omega^{\ee_l}}$, $\ee_0\geqslant \ldots \geqslant \ee_l$ be an infinite,  additively indecomposable ordinal. Given a non-empty subset $A$ of $l+1=\lambda(\gamma)$ and a tree $P$ with $\text{rank}(P)=\gamma$,   we say $Q\subset P$ is an $A$-\emph{contraction} of $P$ provided that, with $A=\{a(0), \ldots, a(q)\}$ and $a(0)<\ldots <a(q)$,  $$\text{rank}(Q)=\omega^{\omega^{\ee_0}\cdot 1_A(0)+\omega^{\ee_1}\cdot 1_A(1)+ \ldots +\omega^{\ee_l}\cdot 1_A(l)}=\omega^{\omega^{\ee_{a(0)}}}\cdot \ldots \cdot\omega^{\omega^{\ee_{a(q)}}}$$ and $$\varsigma_P(s,t)=a(\varsigma_Q(s,t))$$    for all $(s,t)\in \Lambda_2(Q)$.    We say $Q\subset P$ is a $\varnothing$-\emph{contraction} of $P$ if $\text{rank}(Q)=1$.

Let us say that an additively indecomposable ordinal $\gamma$ is \emph{acceptable} if for any tree $P$ with $\text{rank}(P)=\gamma$ and any  subset $A$ of $\lambda(\gamma)$, $P$ has an $A$-contraction.   It is evident that $1$ is acceptable, since any tree of rank $1$ is a $\varnothing$-contraction of itself.

\begin{lemma} Suppose $\gamma$ is an additively indecomposable ordinal which is acceptable. If $\gamma>1$, write $\gamma=\omega^{\omega^{\ee_0}}\cdot \ldots \cdot \omega^{\omega^{\ee_l}}$ with $\ee_0\geqslant \ldots \geqslant \ee_l$.    Fix $B\subset \lambda(\gamma)$.     If $B\neq \varnothing$, write $B=\{b(0),\ldots ,b(m)\}$ with $b(0)<\ldots <b(m)$ and let $\beta=\omega^{\omega^{\ee_{b(0)}}}\cdot\ldots\cdot \omega^{\omega^{\ee_{b(m)}}}.$ If $B=\varnothing$,  let $\beta=1$.     Then for any ordinal $\zeta$ and any tree $P$ with $\text{\emph{rank}}(P)=\gamma \cdot \zeta$, there exists a subtree $R$ of $P$ such that $\text{\emph{rank}}(R)=\beta\cdot  \zeta$, $R^{\beta\cdot \delta}\setminus R^{\beta\cdot(\delta+1)}=R\cap (P^{\gamma \cdot \delta}\setminus P^{\gamma\cdot(\delta+1)})$ for each $\delta<\zeta$,  and such that for any $(s,t)\in \Lambda_2(R)$, exactly one of the following holds: \begin{enumerate}[(i)]\item $\tau_{P, \gamma}(s)=\tau_{R, \beta}(s)>\tau_{R,\beta}(t)=\tau_{P,\gamma}(t)$.     \item With $\eta=\tau_{P, \gamma}(s)$, $\tau_{P,\gamma}(t)=\tau_{R,\beta}(s)=\tau_{R, \beta}(t)=\eta$ and  $$b(\varsigma_{R^{\beta\cdot \eta}\setminus R^{\beta\cdot  (\eta+1)}}(s,t))=\varsigma_{P^{\gamma \cdot \eta}\setminus P^{\gamma\cdot (\eta+1)}}(s,t).$$   \end{enumerate}

\label{proto}
\end{lemma}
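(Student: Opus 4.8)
The plan is to induct on $\zeta$, keeping the acceptable ordinal $\gamma$ (and hence $B$, $\beta$, $b$) fixed; write $T(\zeta)$ for the assertion of the lemma at that value of $\zeta$. When $\zeta=0$ we have $P=\varnothing$ and $R=\varnothing$ works. When $\zeta=1$ we have $\text{rank}(P)=\gamma$, so acceptability hands us a $B$-contraction $R$ of $P$; then $\text{rank}(R)=\beta$, the single level identity reads $R\setminus R^\beta=R\cap(P\setminus P^\gamma)$, which is clear since $R^\beta=\varnothing=P^\gamma$, and every pair $(s,t)\in\Lambda_2(R)$ falls under alternative (ii) with $\eta=0$, where the required identity $b(\varsigma_R(s,t))=\varsigma_P(s,t)$ is exactly the defining property of a $B$-contraction. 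So the work is in the successor and limit steps.

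For the successor step $\zeta=\xi+1$, I would peel off the top $\gamma$-block: since $\text{rank}(P)=\gamma\xi+\gamma$, Proposition \ref{ted}$(iii)$ gives $\text{rank}(P^{\gamma\xi})=\gamma$, so acceptability yields a $B$-contraction $D$ of $P^{\gamma\xi}$, with $\text{rank}(D)=\beta$. For each $t\in\text{Leaves}(D)$ choose, using Proposition \ref{easy1}, a leaf $s_t$ of $P^{\gamma\xi}$ with $t\leqslant s_t$; distinct leaves of $D$ produce distinct, pairwise incomparable $s_t$, and $\text{rank}(P(s_t))=\gamma\xi$ by Proposition \ref{ted}$(ix)$. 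Apply $T(\xi)$ to $P(s_t)$ to get $R_t\subset P(s_t)$ with $\text{rank}(R_t)=\beta\xi$ and all the level and $\tau/\varsigma$ data inside $P(s_t)$, and put $R=D\cup\bigl(\amalg_{t\in\text{Leaves}(D)}R_t\bigr)$. Proposition \ref{ted}$(xi)$ (base tree $D$, all grafted ranks equal to $\beta\xi$) gives $\text{rank}(R)=\beta\xi+\beta=\beta\zeta$, $R^{\beta\xi}=D$ and $R^{\beta\mu}=D\cup\amalg_tR_t^{\beta\mu}$ for $\mu\leqslant\beta\xi$; since every element of $R_t$ lies strictly above $s_t$, Proposition \ref{ted}$(iv),(ix)$ bound its $\tau_P$ below $\gamma\xi$, so $R\cap P^{\gamma\xi}=D$ and the top level identity $R^{\beta\xi}\setminus R^{\beta(\xi+1)}=R\cap(P^{\gamma\xi}\setminus P^{\gamma(\xi+1)})$ holds (both sides equal $D$, as $R^{\beta(\xi+1)}=P^{\gamma(\xi+1)}=\varnothing$), while for $\delta<\xi$ it is assembled from the identity for each $R_t$ inside $P(s_t)$ via Proposition \ref{ted}$(vi),(viii)$. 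Finally, for $(s,t)\in\Lambda_2(R)$: if $s,t\in D$ then $\tau_{P,\gamma}(s)=\tau_{R,\beta}(s)=\tau_{R,\beta}(t)=\tau_{P,\gamma}(t)=\xi$, we are in alternative (ii) with $\eta=\xi$, and $b(\varsigma_D(s,t))=\varsigma_{P^{\gamma\xi}}(s,t)$ since $D$ is a $B$-contraction; if $s\in D$ and $t$ lies in some $R_{t'}$, the level identity forces $\tau_{R,\beta}(s)=\xi>\tau_{R,\beta}(t)$, i.e.\ alternative (i); if $s,t$ lie in the same $R_{t'}$, the dichotomy is inherited from $T(\xi)$ after transporting $\varsigma_{P(s_{t'})^{\gamma\eta}\setminus P(s_{t'})^{\gamma(\eta+1)}}$ to $\varsigma_{P^{\gamma\eta}\setminus P^{\gamma(\eta+1)}}$ by Proposition \ref{ted}$(viii)$ and $\varsigma_{R_{t'}^{\beta\eta}\setminus R_{t'}^{\beta(\eta+1)}}$ to $\varsigma_{R^{\beta\eta}\setminus R^{\beta(\eta+1)}}$ by Proposition \ref{ted}$(vi)$.

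For the limit step, with $\zeta$ a limit ordinal, I would work branchwise over $\text{Roots}(P)$. Put $\mu_t=\tau_{P,\gamma}(t)$; then $\mu_t<\zeta$, and $\{\mu_t:t\in\text{Roots}(P)\}$ is cofinal in $\zeta$ because each $P^{\gamma\mu}$ ($\mu<\zeta$) is nonempty and downward closed, hence meets $\text{Roots}(P)$. Let $R_0=\{t\in\text{Roots}(P):\mu_t\geqslant 1\}$ (still cofinal, as $\zeta\geqslant\omega$). For $t\in R_0$ the subtree $P^{\mathrm{lo}}_t:=P[t]\setminus P[t]^{\gamma\mu_t}$ has $\text{rank}(P^{\mathrm{lo}}_t)=\gamma\mu_t<\gamma\zeta$ by Proposition \ref{ted}$(x)$; for $\eta<\mu_t$, Proposition \ref{ted}$(v),(viii)$ give $(P^{\mathrm{lo}}_t)^{\gamma\eta}\setminus(P^{\mathrm{lo}}_t)^{\gamma(\eta+1)}=(P^{\gamma\eta}\setminus P^{\gamma(\eta+1)})[t]$, which has rank exactly $\gamma$ by the second part of Proposition \ref{ted}$(x)$ (since $t\in P^{\gamma(\eta+1)}$) and whose separation function is the restriction of $\varsigma_{P^{\gamma\eta}\setminus P^{\gamma(\eta+1)}}$; moreover $\tau_{P^{\mathrm{lo}}_t,\gamma}=\tau_{P,\gamma}|_{P^{\mathrm{lo}}_t}$. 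Apply $T(\mu_t)$ to $P^{\mathrm{lo}}_t$ to obtain $R_t\subset P^{\mathrm{lo}}_t$ with $\text{rank}(R_t)=\beta\mu_t$, and set $R=\amalg_{t\in R_0}R_t$, which is an incomparable union since $R_t\subset P[t]$. By Proposition \ref{ted}$(vi)$ together with continuity of $\mu\mapsto\beta\mu$, $\text{rank}(R)=\sup_{t\in R_0}\beta\mu_t=\beta\zeta$; the level identities and the concluding dichotomy for pairs in $R$ are read off from the corresponding statements for the $R_t$ using Proposition \ref{ted}$(vi)$ and the identifications just recorded.

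The conceptual steps above are short; the main obstacle is the bookkeeping in the final clause of each inductive step, where one must repeatedly transport $\tau_{\cdot,\gamma}$, $\tau_{\cdot,\beta}$, and the separation functions among $P$, the auxiliary trees $P(s_t)$ and $P^{\mathrm{lo}}_t$, the $\gamma$-blocks $(P^{\gamma\eta}\setminus P^{\gamma(\eta+1)})[t]$, and the incomparable unions. This forces systematic appeals to Proposition \ref{ted}$(v),(vi),(viii),(ix),(x),(xi)$ and, where a $\gamma$-block of a larger tree is involved, Proposition \ref{division}; the one genuinely substantive input beyond formal manipulation is that a $\gamma$-block restricted above a sufficiently deep node retains its full rank $\gamma$, which is the second part of Proposition \ref{ted}$(x)$.
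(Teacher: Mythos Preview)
Your proof is correct and follows essentially the same inductive scheme as the paper's: base case via acceptability, successor step by taking a $B$-contraction of the top $\gamma$-block and grafting inductively obtained subtrees below its leaves via Proposition~\ref{ted}$(xi)$, and limit step by working branchwise over roots and taking an incomparable union. The only variation is cosmetic: in the limit step the paper, for each relevant root $t$, fixes a leaf $s_t\in\text{Leaves}(P[t]^{\gamma\eta_t})$ and applies the hypothesis to $P(s_t)$, whereas you apply it directly to the truncation $P[t]\setminus P[t]^{\gamma\mu_t}$; both have rank $\gamma\mu_t$ with the correct $\gamma$-block structure, so the verification goes through either way.
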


\begin{proof} First note that the condition that $$R^{\beta \cdot \eta}\setminus R^{\beta\cdot (\eta+1)}=R\cap (P^{\gamma\cdot \eta}\setminus P^{\gamma\cdot (\eta+1)})$$ for all $\eta<\zeta$ implies that $\tau_{R,\beta}=\tau_{P, \gamma}|_R$.   Thus once we establish the first equality in the preceding sentence, we will have $\tau_{P,\gamma}(s)=\tau_{R, \beta}(s)$ and $\tau_{P, \gamma}(t)=\tau_{R,\beta}(t)$ in items $(i)$ and $(ii)$.

 We prove the result by induction on $\zeta$. Note that in all cases, at most one of the items $(i)$ and $(ii)$ holds, so we must show that at least one of them holds.  If $\zeta=0$, the result is vacuous.

Suppose  $\zeta=1$ and $\text{rank}(P)=\gamma \cdot 1=\gamma$. Since $\gamma$ is acceptable, $P$ admits a $B$-contraction $R$.   By the properties of $B$-contraction, $\text{rank}(R)=\beta$ and $$b(\varsigma_R(s,t))=\varsigma_P(s,t)$$ for all $(s,t)\in \Lambda_2(R)$. Note that in this case, since $\gamma=\text{rank}(P)$ and $\beta=\text{rank}(R)$,   $\tau_{P, \gamma}(s)=0=\tau_{R, \beta}(s)$ for all $s\in R$. Thus $(ii)$ is satisfied for all $(s,t)\in \Lambda_2(R)$ in this case.

Suppose that for some $\zeta>1$, $\text{rank}(P)=\gamma \cdot (\zeta+1)= \gamma\cdot\zeta+\gamma$, and the result holds for $\zeta$.  Since $\text{rank}(P^{\gamma \cdot \zeta})=\gamma$ and since $\gamma$ is acceptable, there exists a $B$-contraction $Q$ of $P^{\gamma \cdot \zeta}$.   For each $t\in \text{Leaves}(Q)$, fix some $s_t\in \text{Leaves}(P^{\gamma\cdot \zeta})$ such that $t\leqslant s_t$.    Now since $\text{rank}(P(s_t))= \gamma\cdot  \zeta$, by the inductive hypothesis, there exists a subtree $R_t\subset P(s_t)$ such that $\text{rank}(R_t)=\beta\cdot \zeta$, $$R_t^{\beta\cdot \eta}\setminus R_t^{\beta\cdot (\eta+1)}=R_t\cap (P(s_t)^{\gamma\cdot \eta}\setminus P(s_t)^{\gamma\cdot (\eta+1)})= R_t\cap (P^{\gamma\cdot \eta}\setminus P^{\gamma\cdot (\eta+1)})$$ for all $\eta<\zeta$, and for each $(u,v)\in \Lambda_2(R_t)$, either $$\tau_{R_t,\beta}(u)=\tau_{P(s_t), \gamma}(u)=\tau_{P, \gamma}(u)>\tau_{R_t, \beta}(v)=\tau_{P(s_t), \gamma}(v)=\tau_{P,\gamma}(v) $$ or for some $\eta<\zeta$, $$\tau_{R_t, \beta}(u)=\tau_{P, \gamma}(u)=\tau_{R_t, \beta}(v)=\tau_{P, \gamma}(v)=\eta$$ and $$b(\varsigma_{R_t^{\beta\cdot \eta}\setminus R_t^{\beta\cdot (\eta+1)}}(u,v))=\varsigma_{P(s_t)^{\gamma\cdot \eta}\setminus P(s_t)^{\gamma\cdot (\eta+1)}}(u,v)=\varsigma_{P^{\gamma\cdot \eta}\setminus P^{\gamma\cdot (\eta+1)}}(u,v).$$ Here we repeatedly use the fact that $\tau_{P(s_t)}=\tau_P|_{P(s_t)}$, which follows from Proposition \ref{ted}$(viii)$.         Let $$R=Q\cup \Bigl(\amalg_{t\in \text{Leaves}(Q)} R_t\Bigr).$$   It follows from Proposition \ref{ted}$(xi)$ that $\text{rank}(R)=\beta \cdot (\zeta+1)$, $Q=R^{\beta\cdot \zeta}\setminus R^{\beta\cdot (\zeta+1)}$, and for each $\eta\leqslant \zeta$, $$R^{\beta\cdot \eta}\setminus R^{\beta\cdot(\eta+1)}=\amalg_{t\in \text{Leaves}(Q)} R_t^{\beta\cdot \eta}\setminus R_t^{\beta\cdot (\eta+1)}.$$   From this and the properties of the trees $R_t$, if $\eta=\zeta$,  $$R^{\beta\cdot \eta}\setminus R^{\beta\cdot (\eta+1)}=Q=R\cap (P^{\gamma\cdot \eta}\setminus P^{\gamma\cdot (\eta+1)}),$$ and if $\eta<\zeta$,  \begin{align*} R^{\beta\cdot \eta}\setminus R^{\beta\cdot (\eta+1)} & = \amalg_{t\in \text{Leaves}(Q)} R_t^{\beta \cdot \eta}\setminus R_t^{\beta\cdot (\eta+1)}=\Bigl(\amalg_{t\in \text{Leaves}(Q)}R_t\Bigr)\cap  P^{\gamma\cdot \eta}\setminus P^{\gamma\cdot(\eta+1)} \\ & =R\cap (P^{\gamma\cdot \eta}\setminus P^{\gamma\cdot (\eta+1)}).\end{align*}          Now fix $(u,v)\in \Lambda_2(R)$. Note that $u<v$ implies that $\tau_{R, \beta}(u)\geqslant \tau_{R, \beta}(v)$.  By the first sentence of the proof, $\tau_{R,\beta}(u)=\tau_{P, \gamma}(u)$ and $\tau_{R, \beta}(v)=\tau_{P, \gamma}(v)$.    If $\tau_{R, \beta}(u)>\tau_{R, \beta}(v)$, $(i)$ is satisfied.  If $\tau_{R, \beta}(u)=\tau_{R, \beta}(v)$, then consider two cases. If $\tau_{R, \beta}(u)=\zeta$, then $u,v\in Q=R^{\beta\cdot \zeta}\setminus R^{\beta\cdot (\zeta+1)}$.  Since $Q$ is a $B$-contraction of $P^{\gamma\cdot \zeta}$, $$b(\varsigma_Q(u,v))=\varsigma_{P^{\gamma\cdot \zeta}}(u,v).$$ Thus $(ii)$ is satisfied in this case.  Now if $\eta=\tau_{R, \beta}(u)=\tau_{R, \beta}(v)<\zeta$, then there exists a unique leaf $t\in \text{Leaves}(Q)$ such that $u,v\in R_t$.  Since $\tau_{R_t, \beta}=\tau_{R, \beta}|_{R_t}$ by Proposition \ref{ted}$(viii)$, $(u,v)\in \Lambda_2(R_t^{\beta\cdot \eta}\setminus R_t^{\beta\cdot (\eta+1)})$ and $\varsigma_{R_t^{\beta\cdot \eta}\setminus R_t^{\beta\cdot (\eta+1)}}(u,v)=\varsigma_{R^{\beta\cdot \eta}\setminus R^{\beta\cdot (\eta+1)}}(u,v)$.    By the properties of $R_t$, $$b(\varsigma_{R^{\beta\cdot \eta}\setminus R^{\beta\cdot (\eta+1)}}(u,v))=\varsigma_{P^{\gamma\cdot \eta}\setminus P^{\gamma\cdot (\eta+1)}}(u,v).$$  This finishes the successor case.

Now suppose $\zeta$ is a limit ordinal and the result holds for all smaller ordinals.   Let $P$ be a tree with rank $\gamma\cdot \zeta$.    Let $$S=\{t\in \text{Roots}(P): \text{rank}(P[t])>\gamma\}$$ and for each $t\in \text{Roots}(S)$, let $$\eta_t= \sup\{\eta: \text{rank}(P[t])>\gamma\cdot \eta\}.$$  Arguing as in Lemma \ref{grow3}, this supremum is a maximum, $\eta_t<\zeta$, and $\sup_{t\in S}\eta_t=\zeta$.  For each $t\in S$, we may fix $s_t\in \text{Leaves}(P[t])^{\gamma\cdot \eta_t}$ and let $Q_t=P(s_t)$.   By the inductive hypothesis, for each $t\in S$, there exists $R_t\subset Q_t$ with $\text{rank}(R_t)=\beta\cdot \eta_t$ such that for each $\eta<\eta_t$, $$R_t^{\beta\cdot \eta}\setminus R_t^{\beta\cdot (\eta+1)}=R_t\cap (Q_t^{\gamma\cdot \eta}\setminus Q_t^{\gamma\cdot (\eta+1)})=R_t\cap (P^{\gamma\cdot \eta}\setminus P^{\gamma\cdot (\eta+1)}),$$ and for each $(u,v)\in \Lambda_2(R_t)$, either $\tau_{R_t, \beta}(u)>\tau_{R_t, \beta}(v)$ or there exists $\eta<\eta_t$ such that $$\tau_{R_t, \beta}(u)=\tau_{R_t, \beta}(v)=\tau_{Q_t, \gamma}(u)=\tau_{P, \gamma}(u)=\tau_{Q_t, \gamma}(v)=\tau_{P, \gamma}(v)=\eta$$ and $$b(\varsigma_{R_t^{\beta\cdot \eta}\setminus R_t^{\beta\cdot (\eta+1)}}(u,v))=\varsigma_{Q_t^{\gamma\cdot \eta}\setminus Q_t^{\gamma\cdot (\eta+1)}}(u,v)=\varsigma_{P^{\gamma\cdot \eta}\setminus P^{\gamma\cdot (\eta+1)}}(u,v).$$ Let $R=\amalg_{t\in S}R_t$.   Then $$\text{rank}(R)=\sup_{t\in S}\text{rank}(R_t)=\sup_{t\in S} \gamma\cdot \eta_t=\gamma\cdot \zeta$$   and for each $\eta<\zeta$, $$R^{\beta\cdot \eta}\setminus R^{\beta\cdot (\eta+1)}=\amalg_{t\in S} R_t^{\beta\cdot \eta}\setminus R_t^{\beta\cdot (\eta+1)}=\amalg_{t\in S}R_t\cap (P^{\gamma\cdot \eta}\setminus P^{\gamma\cdot (\eta+1)})=R\cap (P^{\gamma\cdot \eta}\setminus P^{\gamma\cdot (\eta+1)}).$$   Now fix $(u,v)\in \Lambda_2(R)$. Then there exists $t\in S$ such that $(u,v)\in \Lambda_2(R_t)$. Moreover, $\tau_{R_t, \beta}(u)=\tau_{R, \beta}(u)$ and $\tau_{R_t, \beta}(v)=\tau_{R,\beta}(v)$.  It follows from the properties of $R_t$ that either $\tau_{R, \beta}(u)>\tau_{R, \beta}(v)$ or, with $\eta=\tau_{R, \beta}(u)=\tau_{R,\beta}(v)=\tau_{P, \gamma}(u)=\tau_{P, \gamma}(v)$,  $$b(\varsigma_{R^{\beta\cdot \eta}\setminus R^{\beta\cdot (\eta+1)}}(u,v))=\varsigma_{P^{\gamma\cdot \eta}\setminus P^{\gamma\cdot (\eta+1)}}(u,v).$$

\end{proof}

\begin{corollary} Suppose that $\gamma$ is an acceptable, additively indecomposable ordinal and if $\gamma>1$, write $\gamma=\omega^{\omega^{\ee_0}}\cdot \ldots \cdot \omega^{\omega^{\ee_l}}$ with  $\ee_0\geqslant \ldots \geqslant \ee_l$. Then for any ordinal $\ee\leqslant \ee_l$, $\gamma\cdot \omega^{\omega^\ee}$ is acceptable. 

\label{ju}

\end{corollary}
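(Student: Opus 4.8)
The plan is to fix a tree $P$ with $\text{rank}(P)=\gamma\cdot\omega^{\omega^\ee}$ and a subset $A$ of $\lambda(\gamma\cdot\omega^{\omega^\ee})$, and to exhibit an $A$-contraction of $P$. First note that $\gamma\cdot\omega^{\omega^\ee}$ is additively indecomposable, and, when $\gamma>1$, has the form $\gamma\cdot\omega^{\omega^\ee}=\omega^{\omega^{\ee_0}}\cdot\ldots\cdot\omega^{\omega^{\ee_l}}\cdot\omega^{\omega^\ee}$ with $\ee_0\geqslant\ldots\geqslant\ee_l\geqslant\ee$ (this is where $\ee\leqslant\ee_l$ is used); thus $\lambda(\gamma\cdot\omega^{\omega^\ee})=l+2$, the exponent at the final index $l+1$ is $\ee$, and the largest proper initial product $\alpha_l$ of $\gamma\cdot\omega^{\omega^\ee}$ equals $\gamma$. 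The case $\gamma=1$ is immediate, since then $\gamma\cdot\omega^{\omega^\ee}=\omega^{\omega^\ee}$ has $\lambda=1$ and the only contractions required are a leaf of $P$ (for $A=\varnothing$) and $P$ itself (for $A=\{0\}$). I will also use the elementary fact that for an additively indecomposable $\mu$ with $\lambda(\mu)=r+1$ and a tree $S$ of rank $\mu$, the block partition of $S$ at an initial product refines the one at a larger initial product (by left monotonicity of ordinal multiplication, each larger block is a disjoint union of smaller ones); consequently $\varsigma_S(s,t)=r$ exactly when $s$ and $t$ lie in different blocks at the largest proper initial product of $\mu$. I will apply this to $P$ (largest proper initial product $\gamma$) and to the tree $R$ constructed below.

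If $A=\varnothing$, a leaf of $P$ works. If $A\ne\varnothing$ and $l+1\notin A$, then $A\subseteq\lambda(\gamma)$; here I would pass to $P\setminus P^\gamma$, which has rank $\gamma$ by Proposition \ref{ted}$(x)$ and satisfies $\varsigma_{P\setminus P^\gamma}=\varsigma_P|_{\Lambda_2(P\setminus P^\gamma)}$ by Proposition \ref{division} (applied with $\eta=0$ to the normal form of $\gamma\cdot\omega^{\omega^\ee}$). Since $\gamma$ is acceptable, $P\setminus P^\gamma$ has an $A$-contraction $Q$; because the $A$-contraction rank computed from $\gamma$ coincides with the one computed from $\gamma\cdot\omega^{\omega^\ee}$ (the exponents indexed by $A$ are the same), and because $\varsigma_P$ agrees with $\varsigma_{P\setminus P^\gamma}$ on $\Lambda_2(Q)$, the subtree $Q\subseteq P$ is an $A$-contraction of $P$.

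The substantive case is $l+1\in A$. Write $A=A'\cup\{l+1\}$ with $A'=A\cap\lambda(\gamma)$, let $a$ and $a'$ be the increasing enumerations of $A$ and $A'$ (they agree on $\{0,\dots,|A'|-1\}$ and $a(|A'|)=l+1$), and let $\beta$ be the $A'$-contraction rank of $\gamma$, with $\beta=1$ when $A'=\varnothing$. Apply Lemma \ref{proto} with this $\gamma$, $B=A'$, and $\zeta=\omega^{\omega^\ee}$ (so $\gamma\cdot\zeta=\text{rank}(P)$) to obtain a subtree $R\subseteq P$ with $\text{rank}(R)=\beta\cdot\omega^{\omega^\ee}$, with $\tau_{R,\beta}=\tau_{P,\gamma}|_R$, and with each $(s,t)\in\Lambda_2(R)$ in exactly one of the two alternatives of that lemma. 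Then $\text{rank}(R)=\beta\cdot\omega^{\omega^\ee}$ is exactly the rank an $A$-contraction of $P$ must have (the index $l+1\in A$ contributes the factor $\omega^{\omega^\ee}$, and $A'$ contributes $\beta$), so it remains to check $\varsigma_P(s,t)=a(\varsigma_R(s,t))$ on $\Lambda_2(R)$. By the monotonicity recalled above, $\varsigma_R(s,t)$ attains its top value $|A'|$ exactly when $\tau_{R,\beta}(s)\ne\tau_{R,\beta}(t)$, i.e.\ exactly in the first alternative of Lemma \ref{proto} (when $A'=\varnothing$ this is the only alternative, since $\beta=1$ forces $\tau_{R,1}(s)>\tau_{R,1}(t)$ for $s<t$ by Proposition \ref{ted}$(iv)$, and $\varsigma_R\equiv0=|A'|$); in that alternative $\tau_{P,\gamma}(s)\ne\tau_{P,\gamma}(t)$, whence $\varsigma_P(s,t)=l+1=a(|A'|)=a(\varsigma_R(s,t))$. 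In the second alternative $\varsigma_R(s,t)<|A'|$, so $A'\ne\varnothing$, and with $\eta:=\tau_{R,\beta}(s)=\tau_{R,\beta}(t)=\tau_{P,\gamma}(s)=\tau_{P,\gamma}(t)<\omega^{\omega^\ee}$ the pair lies in $R^{\beta\cdot\eta}\setminus R^{\beta\cdot(\eta+1)}$ and in $P^{\gamma\cdot\eta}\setminus P^{\gamma\cdot(\eta+1)}$; Proposition \ref{division}, used once for $R$ and once for $P$, identifies the separations on these two blocks with $\varsigma_R$ and $\varsigma_P$ respectively, and substituting into the equality supplied by Lemma \ref{proto} gives $a'(\varsigma_R(s,t))=\varsigma_P(s,t)$, which is $a(\varsigma_R(s,t))=\varsigma_P(s,t)$ since $\varsigma_R(s,t)<|A'|$. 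Hence $R$ is the desired $A$-contraction of $P$.

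I expect the only real difficulty to be the index bookkeeping in the last case: keeping track of four separation functions — on $P$, on $R$, and on the $\gamma$- and $\beta$-blocks at level $\eta$ — together with the two enumerations $a$ and $a'$, and invoking Proposition \ref{division} in both directions so that the identity furnished by Lemma \ref{proto} matches the definition of an $A$-contraction of $P$. No combinatorial input beyond Lemma \ref{proto} and Proposition \ref{division} appears to be needed; everything else is careful index tracking together with the standard facts from Proposition \ref{ted}.
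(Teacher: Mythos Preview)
Your proof is correct and follows essentially the same route as the paper: handle $A=\varnothing$ with a singleton, handle the case where the top index $\lambda(\gamma)$ is not in $A$ by passing to $P\setminus P^\gamma$ and invoking acceptability of $\gamma$ together with Proposition \ref{division}, and handle the case $\lambda(\gamma)\in A$ by applying Lemma \ref{proto} with $B=A\cap\lambda(\gamma)$ and $\zeta=\omega^{\omega^\ee}$, then verifying the $A$-contraction property via the two alternatives of that lemma and Proposition \ref{division}. The only cosmetic difference is that you isolate the $\gamma=1$ case at the outset, whereas the paper absorbs it into the general argument.
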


\begin{proof} Note that $\lambda(\gamma\cdot \omega^{\omega^\ee})=\lambda(\gamma)+1$.  Fix $A\subset \lambda(\gamma \cdot \omega^{\omega^\ee})$. If $A=\varnothing$, any tree $P$ whose rank is $\gamma\cdot \omega^{\omega^\ee}$ has a singleton subset, which is a $\varnothing$-contraction.  Assume $A\neq \varnothing$. Let $B=A\cap \lambda(\gamma)$ and write $$A=\{a(0), \ldots, a(n)\}$$ with $a(0)<\ldots <a(n)$.    If $B\neq \varnothing$, write $$B=\{b(0), \ldots, b(m)\}$$ with $b(0)<\ldots <b(m)$.  Let $$\alpha= \omega^{\omega^{\ee_{a(0)}}}\cdot \ldots \cdot \omega^{\omega^{\ee_{a(n)}}}.$$       Fix a tree $P$ with $\text{rank}(P)=\gamma\cdot \omega^{\omega^\ee}$.  Let $\ee_{l+1}=\ee$.  Let us consider two cases.

Case $1$, $\lambda(\gamma)\notin A$: In this case,  $A=B$. Let us define $\beta$ by  $$\beta=\alpha=\omega^{\omega^{\ee_0} \cdot 1_A(0) +\ldots +\omega^{\ee_{l+1}}\cdot 1_A(l+1)}=\omega^{\omega^{\ee_0} \cdot 1_B(0) +\ldots +\omega^{\ee_{l+1}}\cdot 1_B(l+1)}.$$  Let $Q=P\setminus P^\gamma$ and note that $\text{rank}(Q)=\gamma$ and $\varsigma_Q(u,v)=\varsigma_P(u,v)$ for any $(u,v)\in Q$ by Proposition \ref{division}.  Since $\gamma$ is acceptable, there exists a $B$-contraction $R$ of $Q$. We verify that $R$ is also an $A$-contraction of $P$.  We note that by the definition of $B$-contraction, $\text{rank}(R)=\beta=\alpha$. Now for any $(u,v)\in \Lambda_2(R)$, $$a(\varsigma_R(u,v))=b(\varsigma_R(u,v))=\varsigma_Q(u,v)=\varsigma_P(u,v).$$

Case $2$,  $\lambda(\gamma)\in A$: Let  $$\beta=\omega^{\omega^{\ee_0} \cdot 1_B(0) +\ldots +\omega^{\ee_{l+1}}\cdot 1_B(l+1)}=\omega^{\omega^{\ee_0} \cdot 1_B(0) +\ldots +\omega^{\ee_l}\cdot 1_B(l)}.$$     Note that $\text{rank}(P)=\gamma\cdot \omega^{\omega^\ee}$ and $\alpha=\beta\cdot \omega^{\omega^\ee}$.  By Lemma \ref{proto} applied with $\zeta=\omega^{\omega^\ee}$,  there exists $R\subset P$ such that $\text{rank}(R)=\beta \cdot \omega^{\omega^\ee}$,  $R^{\beta\cdot \eta}\setminus R^{\beta \cdot (\eta+1)}=R\cap (P^{\gamma\cdot \eta}\setminus P^{\gamma\cdot (\eta+1)})$ for each $\eta<\omega^{\omega^\ee}$,  and for each $(s,t)\in \Lambda_2(R)$, either $\tau_{R, \beta}(s)=\tau_{P, \gamma}(s)>\tau_{R, \beta}(t)=\tau_{P, \gamma}(t)$ or there exists $\eta<\omega^{\omega^\ee}$ such that $\eta=\tau_{R, \beta}(s)=\tau_{P, \gamma}(s)=\tau_{R, \beta}(t)=\tau_{P, \gamma}(t)$ and $$b(\varsigma_{R^{\beta \cdot \eta}\setminus R^{\beta\cdot (\eta+1)}}(s,t))= \varsigma_{P^{\gamma\cdot \eta}\setminus P^{\gamma\cdot (\eta+1)}}(s,t).$$  We now check that $R$ is an $A$-contraction of $P$.   Fix $(s,t)\in \Lambda_2(R)$.  First suppose that $\tau_{R,\beta}(s)>\tau_{R, \beta}(t)$, in which case $\tau_{P, \gamma}(s)>\tau_{P, \gamma}(t)$. Then $\varsigma_R(s,t)=n$, the largest possible value of $\varsigma_R$, and $\varsigma_P(s,t)=l+1$, the largest possible value of $\varsigma_P$. Furthermore, since $l+1=\lambda(\gamma)\in A$, $a(n)=\max A=l+1$, so $$a(\varsigma_R(s,t))=a(n)=l+1=\varsigma_P(s,t).$$  Now suppose that $\tau_{R, \beta}(s)=\tau_{R, \beta}(t)=\eta$. Note that this implies that $B\neq \varnothing$, since $B=\varnothing$ implies $\beta=1$ and $\tau_{R, \beta}(s)>\tau_{R, \beta}(t)$ for any $(s,t)\in \Lambda_2(R)$ by Proposition \ref{ted}$(iv)$.   Moreover, since $\tau_{R, \beta}(s)=\tau_{R, \beta}(t)$, $\varsigma_R(s,t)<n$.     Then by Proposition \ref{division}, $$\varsigma_R(s,t)=\varsigma_{R^{\beta\cdot \eta}\setminus R^{\beta \cdot (\eta+1)}}(s,t)$$ and $$\varsigma_P(s,t)=\varsigma_{P^{\gamma\cdot \eta}\setminus P^{\gamma\cdot (\eta+1)}}(s,t).$$  Furthermore, by our choice of $R$ together with the fact that $\varsigma_R(s,t)\leqslant n$, $$a(\varsigma_R(s,t))=b(\varsigma_{R^{\beta\cdot \eta}\setminus R^{\beta\cdot (\eta+1)}}(s,t))=\varsigma_{P^{\gamma\cdot \eta}\setminus P^{\gamma\cdot (\eta+1)}}(s,t)=\varsigma_P(s,t).$$

\end{proof}




\begin{corollary} For any additively indecomposable ordinal $\gamma$, any tree $Q$ with $\text{\emph{rank}}(Q)=\gamma$, and any  subset $A$ of $\lambda(\gamma)$, $Q$ has an $A$-contraction.

\label{henderson}
\end{corollary}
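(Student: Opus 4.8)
The plan is to establish the equivalent assertion that every additively indecomposable ordinal is \emph{acceptable}; Corollary \ref{henderson} is then just the definition of acceptability. I would argue by transfinite induction on the additively indecomposable ordinal $\gamma$. The base case is $\gamma=1$, which is acceptable because $\lambda(1)=0$, so $\varnothing$ is the only subset of $\lambda(1)$, and any tree of rank $1$ is a $\varnothing$-contraction of itself (as already noted just after the definition of acceptability).

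For the inductive step I would fix an additively indecomposable $\gamma>1$, assume every additively indecomposable ordinal below $\gamma$ is acceptable, and write $\gamma=\omega^{\omega^{\ee_0}}\cdot\ldots\cdot\omega^{\omega^{\ee_l}}$ with $\ee_0\geqslant\ldots\geqslant\ee_l$. The idea is to peel off the last factor: put $\gamma'=\omega^{\omega^{\ee_0}}\cdot\ldots\cdot\omega^{\omega^{\ee_{l-1}}}$, with the convention $\gamma'=1$ when $l=0$, so that $\gamma'$ is again additively indecomposable and $\gamma=\gamma'\cdot\omega^{\omega^{\ee_l}}$. When $l=0$ this $\gamma'$ equals $1$, hence is acceptable by the base case; when $l\geqslant 1$ we have $\gamma'<\gamma'\cdot\omega\leqslant\gamma'\cdot\omega^{\omega^{\ee_l}}=\gamma$, so $\gamma'$ is acceptable by the inductive hypothesis. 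In either case the exponent $\ee_l$ is at most the least exponent occurring in the representation of $\gamma'$ (this comparison being vacuous when $\gamma'=1$), which is precisely the hypothesis ``$\ee\leqslant\ee_l$'' of Corollary \ref{ju}. Applying that corollary to $\gamma'$ with the exponent $\ee_l$ then gives that $\gamma=\gamma'\cdot\omega^{\omega^{\ee_l}}$ is acceptable, closing the induction.

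I do not expect a genuine obstacle here: all the substance lies in Lemma \ref{proto} and Corollary \ref{ju}, and the present corollary is only the inductive harvesting of those results. The sole points demanding attention are the routine ordinal bookkeeping --- namely that $\gamma'<\gamma$ when $l\geqslant 1$, that $\ee_l\leqslant\ee_{l-1}$ follows from $\ee_0\geqslant\ldots\geqslant\ee_l$, and that the degenerate case $\gamma'=1$ be handled by reading Corollary \ref{ju} with no restriction on its exponent parameter.
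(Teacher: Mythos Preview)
Your proposal is correct and follows essentially the same route as the paper: peel off the rightmost factor $\omega^{\omega^{\ee_l}}$, observe that the remaining product $\gamma'$ is already acceptable, and invoke Corollary \ref{ju}. The only cosmetic difference is that the paper inducts on the natural number $\lambda(\gamma)$ rather than by transfinite induction on $\gamma$, which spares the check that $\gamma'<\gamma$; but your version of that check is sound, and the two arguments are otherwise identical.
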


\begin{proof} Corollary \ref{henderson} is equivalent to saying that for any $l<\omega$ and any additively indecomposable ordinal $\gamma$ with $\lambda(\gamma)=l$, $\gamma$ is acceptable.    We prove this by induction on $l<\omega$.   The $l=0$ case is simply the aforementioned fact that $\gamma=1$ is acceptable.

Now suppose that for some $l< \omega$, every additively indecomposable ordinal $\gamma$ with $\lambda(\gamma)=l$ is $\gamma$ is acceptable.   Suppose that $\nu$ is an additively indecomposable ordinal  with $\lambda(\nu)=l+1$ and write $$\nu=\omega^{\omega^{\ee_0}}\cdot \ldots \omega^{\omega^{\ee_{l-1}}}\cdot \omega^{\omega^\ee}$$ with $\ee_0\geqslant \ldots \geqslant \ee_{l-1}\geqslant \ee$.   Let $$\gamma=\omega^{\omega^{\ee_0}}\cdot \ldots \cdot \omega^{\omega^{\ee_{l-1}}},$$ with the convention that $\gamma=1$ if $l=0$.    Then $\lambda(\gamma)=l$, whence $\gamma$ is acceptable by the inductive hypothesis. Therefore  $\nu=\gamma\cdot \omega^{\omega^\ee}$ is acceptable by Corollary \ref{ju}.

\end{proof}

Let us say that an additively indecomposable ordinal $\gamma$ is \emph{tractable} provided that for any tree $P$ with $\text{rank}(P)=\gamma$, any $k<\omega$, and any function $f:\Lambda_2(P)\to k+1$, there exist a subtree $Q$ of $P$ with $\text{rank}(Q)=\gamma$ and a function $F:\lambda(\gamma)\to k+1$ such that for any $(s,t)\in \Lambda_2(Q)$, $$\varsigma_P(s,t)=\varsigma_Q(s,t)$$ and  $$F(\varsigma_Q(s,t))=f(s,t).$$  Note that if $\gamma=1$, $\lambda(\gamma)=0=\varnothing$ and $\Lambda_2(P)=\varnothing$, so the existence of the function $F$ satisfying the conclusion is vacuous.  From this it easily follows that $1$ is tractable, since taking $Q=P$ satisfies the conclusions in the case $\text{rank}(P)=1$.

\begin{lemma} Fix $k,l< \omega$.    Suppose  $\gamma$ is a tractable additively indecomposable ordinal with $\lambda(\gamma)=l+1$.

\begin{enumerate}[(i)]\item   If $N< \omega$,  $P$ is a tree with $\text{\emph{rank}}(P)=\gamma\cdot (N+1)$, and $f:\Lambda_2(P)\to k+1$ is a function, then there exist a subtree $Q$ of $P$ with $\text{\emph{rank}}(Q)=\gamma \cdot(N+1)$ and for each $i\leqslant N$ a function $F_i:l+1\to k+1$ such that for each $i\leqslant N$ and any $(s,t)\in \Lambda_2(Q^{\gamma \cdot i}\setminus Q^{\gamma \cdot (i+1)})$, $$Q\cap (P^{\gamma\cdot  i}\setminus P^{\gamma \cdot(i+1)})=Q^{\gamma \cdot i}\setminus Q^{\gamma\cdot(i+1)},$$   $$\varsigma_{P^{\gamma\cdot i}\setminus P^{\gamma\cdot (i+1)}}(s,t)=\varsigma_{Q^{\gamma \cdot i}\setminus Q^{\gamma \cdot (i+1)}}(s,t),$$ and  $$f(s,t)=F_i(\varsigma_{Q^{\gamma\cdot  i}\setminus Q^{\gamma \cdot (i+1)}}(s,t)).$$

\item If $n,N<\omega$ are such that   $ n \cdot (k+1)^{\lambda(\gamma)}< N<\omega$, $P$ is a tree with $\text{\emph{rank}}(P)=\gamma\cdot (N+1)$, and $f:\Lambda_2(P)\to k+1$ is a function, then there exist $0\leqslant s(0)<\ldots <s(n)\leqslant  N$, a function $F:\lambda(\gamma)\to k+1$,  and a subtree $R$ of $P$ with $\text{\emph{rank}}(R)=\gamma \cdot (n+1)$ such that for each $i\leqslant n$ and any $(s,t)\in \Lambda_2(R^{\gamma\cdot  i}\setminus R^{\gamma\cdot (i+1)})$,  $$R^{\gamma \cdot i}\setminus R^{\gamma\cdot (i+1)}=R\cap ( P^{\gamma\cdot (s(i))}\setminus P^{\gamma\cdot (s(i)+1)}),$$   $$\varsigma_{R^{\gamma\cdot i}\setminus R^{\gamma \cdot (i+1)}}(s,t)= \varsigma_{P^{\gamma \cdot s(i)}\setminus P^{\gamma \cdot (s(i)+1)}}(s,t), $$ and  $$f(s,t)=F(\varsigma_{R^{\gamma\cdot  i}\setminus R^{\gamma\cdot (i+1)}}(s,t)).$$  \end{enumerate}

\label{medieval times}

\end{lemma}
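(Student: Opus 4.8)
The plan is to prove (i) by induction on $N$, peeling off one level at a time and using the tractability of $\gamma$ to stabilize the coloring on each level of rank $\gamma$, and then to obtain (ii) from (i) by a pigeonhole argument together with Lemma~\ref{grow2}. Throughout, write $\gamma=\omega^\xi$ (possible since $\gamma$ is additively indecomposable), so that $\gamma\cdot(N+1)=\omega^\xi+\dots+\omega^\xi$ and the trees $P^{\gamma\cdot i}\setminus P^{\gamma\cdot(i+1)}$, each of rank $\gamma$ by Proposition~\ref{ted}$(iii)$ and $(x)$, are the ``levels'' appearing in Lemmas~\ref{grow1} and~\ref{grow2} (whose thresholds $\gamma_i$ then equal $\gamma\cdot i$, as in the remark following Lemma~\ref{grow2}); in particular $\varsigma_{P^{\gamma\cdot i}\setminus P^{\gamma\cdot(i+1)}}$ is defined. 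The base case $N=0$ of (i) is exactly the definition of tractability of $\gamma$: it yields $Q\subset P$ with $\text{rank}(Q)=\gamma$, $\varsigma_Q=\varsigma_P|_{\Lambda_2(Q)}$, and $F_0:\lambda(\gamma)\to k+1$ with $f(s,t)=F_0(\varsigma_Q(s,t))$, while item~$(i)$ of the conclusion is vacuous since $P^{\gamma\cdot1}=\varnothing$.

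For the inductive step of (i), let $\text{rank}(P)=\gamma\cdot(N+1)=\gamma\cdot N+\gamma$. Applying tractability of $\gamma$ to $f|_{\Lambda_2(P^{\gamma\cdot N})}$ (note $\text{rank}(P^{\gamma\cdot N})=\gamma$) produces $D\subset P^{\gamma\cdot N}$ with $\text{rank}(D)=\gamma$, $\varsigma_D=\varsigma_{P^{\gamma\cdot N}}|_{\Lambda_2(D)}$, and $F_N:\lambda(\gamma)\to k+1$ with $f(s,t)=F_N(\varsigma_D(s,t))$ on $\Lambda_2(D)$. For each $t\in\text{Leaves}(D)$ fix $s_t\in\text{Leaves}(P^{\gamma\cdot N})$ with $t\leqslant s_t$; then $\text{rank}(P(s_t))=\gamma\cdot N$ by Proposition~\ref{ted}$(ix)$, and the inductive hypothesis applied to $f|_{\Lambda_2(P(s_t))}$ gives $\widetilde Q_t\subset P(s_t)$ with $\text{rank}(\widetilde Q_t)=\gamma\cdot N$ and functions $F^t_0,\dots,F^t_{N-1}:\lambda(\gamma)\to k+1$ satisfying the three conclusions of (i) with $P(s_t)$ in place of $P$. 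Using $P(s_t)^{\gamma\cdot j}\setminus P(s_t)^{\gamma\cdot(j+1)}=(P^{\gamma\cdot j}\setminus P^{\gamma\cdot(j+1)})(s_t)$ and the corresponding restriction identity for $\tau$ (Proposition~\ref{ted}$(viii)$, $(ix)$), these conclusions become exactly the per-leaf hypotheses of Lemma~\ref{grow1}.

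Since $\bigl((k+1)^{\lambda(\gamma)}\bigr)^{N}$ is finite, the $n=0$ case of Lemma~\ref{tek}, in the set form of Remark~\ref{dory}, lets us pass to $D'\subset D$ with $\text{rank}(D')=\gamma$ and $\tau_{D'}=\tau_D|_{D'}$ (hence $\varsigma_{D'}=\varsigma_D|_{\Lambda_2(D')}$ and $\text{Leaves}(D')\subset\text{Leaves}(D)$) on which the tuple $(F^t_0,\dots,F^t_{N-1})$ is constant, say equal to $(F_0,\dots,F_{N-1})$. Put $Q=D'\cup\bigl(\amalg_{t\in\text{Leaves}(D')}\widetilde Q_t\bigr)$. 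Lemma~\ref{grow1} then yields $\text{rank}(Q)=\gamma\cdot(N+1)$, the level identity $Q^{\gamma\cdot i}\setminus Q^{\gamma\cdot(i+1)}=Q\cap(P^{\gamma\cdot i}\setminus P^{\gamma\cdot(i+1)})$, and the separation identity $\varsigma_{Q^{\gamma\cdot i}\setminus Q^{\gamma\cdot(i+1)}}=\varsigma_{P^{\gamma\cdot i}\setminus P^{\gamma\cdot(i+1)}}|_{\Lambda_2(Q^{\gamma\cdot i}\setminus Q^{\gamma\cdot(i+1)})}$ for all $i\leqslant N$. For $i=N$, $Q^{\gamma\cdot N}=D'\subset D$ gives $f(s,t)=F_N(\varsigma_{Q^{\gamma\cdot N}}(s,t))$; for $i<N$, any pair in $\Lambda_2(Q^{\gamma\cdot i}\setminus Q^{\gamma\cdot(i+1)})$ lies in $\widetilde Q_t^{\gamma\cdot i}\setminus\widetilde Q_t^{\gamma\cdot(i+1)}$ for a unique $t\in\text{Leaves}(D')$, so $f(s,t)=F^t_i(\cdot)=F_i(\varsigma_{Q^{\gamma\cdot i}\setminus Q^{\gamma\cdot(i+1)}}(s,t))$. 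This proves (i).

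For (ii), apply (i) to get $Q\subset P$ with $\text{rank}(Q)=\gamma\cdot(N+1)$ and functions $F_0,\dots,F_N:\lambda(\gamma)\to k+1$ as there. There are $(k+1)^{\lambda(\gamma)}$ functions $\lambda(\gamma)\to k+1$, and $N+1>n\cdot(k+1)^{\lambda(\gamma)}$, so by the pigeonhole principle there exist $F:\lambda(\gamma)\to k+1$ and $0\leqslant s(0)<\dots<s(n)\leqslant N$ with $F_{s(i)}=F$ for all $i\leqslant n$. Lemma~\ref{grow2} applied to $Q$ with $A=\{s(0),\dots,s(n)\}$ produces $R=\bigcup_{i=0}^n Q^{\gamma\cdot s(i)}\setminus Q^{\gamma\cdot(s(i)+1)}$ with $\text{rank}(R)=\gamma\cdot(n+1)$, $R^{\gamma\cdot i}\setminus R^{\gamma\cdot(i+1)}=Q^{\gamma\cdot s(i)}\setminus Q^{\gamma\cdot(s(i)+1)}$, and $\varsigma_{R^{\gamma\cdot i}\setminus R^{\gamma\cdot(i+1)}}=\varsigma_{Q^{\gamma\cdot s(i)}\setminus Q^{\gamma\cdot(s(i)+1)}}|_{\Lambda_2(R^{\gamma\cdot i}\setminus R^{\gamma\cdot(i+1)})}$. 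Composing with the conclusions of (i) for $Q$ gives the three stated identities: the level one because $R^{\gamma\cdot i}\setminus R^{\gamma\cdot(i+1)}=Q\cap(P^{\gamma\cdot s(i)}\setminus P^{\gamma\cdot(s(i)+1)})$ is contained in $R$, hence equals $R\cap(P^{\gamma\cdot s(i)}\setminus P^{\gamma\cdot(s(i)+1)})$; the separation one by chaining the two restriction identities; and the coloring one because $f(s,t)=F_{s(i)}(\varsigma_{Q^{\gamma\cdot s(i)}\setminus Q^{\gamma\cdot(s(i)+1)}}(s,t))=F(\varsigma_{R^{\gamma\cdot i}\setminus R^{\gamma\cdot(i+1)}}(s,t))$. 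The only genuine work is bookkeeping: one must check repeatedly that restricting a level of $P$ below a leaf, or taking a totally incomparable union of levels, transforms the separation function in the controlled way recorded in Propositions~\ref{ted} and~\ref{division} and in Lemma~\ref{grow1}, so that the inductive hypotheses and the hypotheses of Lemmas~\ref{grow1}, \ref{grow2}, and~\ref{tek} are literally met. No new combinatorial idea beyond combining the tractability of $\gamma$ with the growing lemmas of Section~3 is required.
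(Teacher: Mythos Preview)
Your proof is correct and follows essentially the same approach as the paper's: induction on $N$ combining tractability of $\gamma$ on the top level, the inductive hypothesis below each top leaf, and Lemma~\ref{tek} to uniformize the resulting function tuples, assembled via Lemma~\ref{grow1}; then pigeonhole plus Lemma~\ref{grow2} for part~$(ii)$. The only cosmetic difference is the order in part~$(i)$---the paper applies the inductive hypothesis at every leaf of $P^{\gamma\cdot N}$ first, then Lemma~\ref{tek}, then tractability, whereas you apply tractability first to get $D$, then the inductive hypothesis only at leaves of $D$, then Lemma~\ref{tek} to refine $D$ to $D'$---but both orderings work equally well.
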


\begin{rem}\upshape In the statement above, we have assumed $\lambda(\gamma)=l+1$ for notational convenience. The assumption that $\lambda(\gamma)=l+1$ excludes the $\lambda(\gamma)=0$, $\gamma=1$ case. However,  the analogous statements are true in the $\gamma=1$ case as well. For $(i)$, we let $Q=P$, and for $(ii)$, we let $s(i)=N-n+i$ for each $i\leqslant n$.   We then let $R=\cup_{i=0}^n P^{s(i)}\setminus P^{s(i)+1}=\cup_{i=0}^n P^{N-n+i}\setminus P^{N-n+i+1}=P^{N-n}$.    These clearly satisfy the conditions $Q^i\setminus Q^{i+1}=Q\cap (P^i\setminus P^{i+1})$ and $R^i\setminus R^{i+1}=R\cap (P^{s(i)}\setminus P^{s(i)+1})$ for all appropriate $i$.   Furthermore, since for each appropriate $i$, $\text{rank}(Q^i\setminus Q^{i+1})=\text{rank}(R^i\setminus R^{i+1})=1$ and $\Lambda_2(Q^i\setminus Q^{i+1})=\Lambda_2(R^i\setminus R^{i+1})=\varnothing$, the remaining conclusions are vacuous in the $\gamma=1$ case.

\end{rem}

\begin{proof}$(i)$ We induct on $N$. If $N=0$, this is equivalent to $\gamma$ being tractable. Suppose that for some $0<N$, the statement holds for $N-1$ and $\text{rank}(P)=\gamma\cdot (N+1)$.   By applying the inductive hypothesis to $P(t)$ for each $t\in \text{Leaves}(P^{\gamma \cdot N})$, we deduce the existence of some $Q_t\subset P(t)$ with $\text{rank}(Q_t)=\gamma\cdot N$ and $F^t_i:\lambda(\gamma)\to k+1$, $i=0, \ldots, N-1$, such that for each $i< N$ and $(s,u)\in \Lambda_2(Q^{\gamma \cdot i}_t\setminus Q^{\gamma\cdot (i+1)}_t)$, $$Q^{\gamma \cdot i}_t\setminus Q^{\gamma\cdot (i+1)}_t= Q_t\cap (P(t)^{\gamma\cdot  i}\setminus P(t)^{\gamma \cdot (i+1)})= Q_t \cap (P^{\gamma\cdot  i}\setminus P^{\gamma\cdot (i+1)}),$$   $$\varsigma_{P(t)^{\gamma \cdot i}\setminus P(t)^{\gamma\cdot (i+1)}}(s,u)=\varsigma_{P^{\gamma \cdot i}\setminus P^{\gamma\cdot (i+1)}}(s,u)= \varsigma_{Q_t^{\gamma\cdot i}\setminus Q_t^{\gamma\cdot (i+1)}}(s,u),$$ and $$f(s,u)=F^t_i(\varsigma_{Q_t^{\gamma i}\setminus Q_t^{\gamma(i+1)}}(s,u)).$$  Here we are using Proposition \ref{ted}$(viii)$ to deduce that $$Q_t\cap (P(t)^{\gamma\cdot i}\setminus P(t)^{\gamma\cdot (i+1)})= Q_t\cap (P^{\gamma\cdot i}\setminus P^{\gamma\cdot (i+1)})$$ and $$\varsigma_{P(t)^{\gamma\cdot i}\setminus P(t)^{\gamma\cdot (i+1)}}(s,u)=\varsigma_{P^{\gamma\cdot i}\setminus P^{\gamma\cdot (i+1)}}(s,u).$$  Now for each tuple $\mathcal{F}=(F_i)_{i<N}$ of functions from $\lambda(\gamma)$ to $k+1$, let $$M_\mathcal{F}=\{t\in \text{Leaves}(P^{\gamma\cdot N}): (F^t_i)_{i<N}=(F_i)_{i< N}\}.$$   By Lemma \ref{tek}, there exist $\mathcal{F}=(F_i)_{i< N}$ and  $T\subset P^{\gamma\cdot N}$ such that $\text{rank}(T)=\text{rank}(P^{\gamma\cdot N})=\gamma$,  $\tau_T=\tau_{P^{\gamma\cdot N}}|_T$, and $\text{Leaves}(T)=M_\mathcal{F}\cap T$.    Since $\tau_T=\tau_{P^{\gamma\cdot N}}|_T$, it follows that $\varsigma_T=\varsigma_{P^{\gamma\cdot N}}|_{\Lambda_2(T)}$.   Applying the tractability of $\gamma$ to the restriction of $f$ to $T$ yields the existence of some $F_N:\lambda(\gamma)\to k+1$ and some $D\subset T$ with $\text{rank}(D)=\gamma$ such that $\varsigma_D(s,t)=\varsigma_T(s,t)=\varsigma_P(s,t)=\varsigma_{P^{\gamma\cdot N}}(s,t)$ and $$f(s,t)=F_N(\varsigma_T(s,t))=F_N(\varsigma_D(s,t))=F_N(\varsigma_P(s,t))$$ for all $(s,t)\in \Lambda_2(D)$.   For each $t\in \text{Leaves}(D)$, fix $s_t\in \text{Leaves}(T)$ such that $t\leqslant s_t$.   Let $$Q=D\cup \Bigl(\amalg_{t\in \text{Leaves}(D)} Q_{s_t}\Bigr).$$     Then the tree $Q$ and the  collection $(F_i)_{i\leqslant N}$ satisfy the conclusions of $(i)$.  Let us verify. First, it follows from Proposition \ref{ted}$(xi)$ that $\text{rank}(Q)=\gamma\cdot (N+1)$ and for each $i\leqslant N$,  $$Q^{\gamma\cdot i}=D\cup \Bigl(\amalg_{t\in \text{Leaves}(D)}Q_{s_t}^{\gamma\cdot i}\Bigr).$$ This  yields that $$Q^{\gamma\cdot i}\setminus Q^{\gamma\cdot (i+1)}=Q\cap (P^{\gamma\cdot i}\setminus P^{\gamma\cdot (i+1)})$$ for each $i\leqslant N$.    Now fix $i\leqslant N$ and $(s,u)\in \Lambda_2(Q^{\gamma\cdot i}\setminus Q^{\gamma\cdot (i+1)})$.   Suppose first that $i=N$.  Then $(s,u)\in \Lambda_2(D)$, whence $$\varsigma_Q(s,u)=\varsigma_D(s,u)=\varsigma_{P^{\gamma\cdot N}}(s,u)=\varsigma_P(s,u)$$ by Proposition \ref{division} and our choice of $D$ and $Q$, and $$f(s,u)=F_N(\varsigma_Q(s,u))=F_N(\varsigma_P(s,u)).$$      Now suppose that $i<N$. In this case, there exists a unique $t\in  \text{Leaves}(D)$ such that $(s,u)\in \Lambda_2(Q_{s_t}^{\gamma\cdot i}\setminus Q_{s_t}^{\gamma\cdot (i+1)})$.  Since $\tau_{Q_{s_t}}=\tau_Q|_{Q_{s_t}}$, $$\varsigma_{Q^{\gamma\cdot i}\setminus Q^{\gamma\cdot (i+1)}}(s,u)=\varsigma_{Q_{s_t}^{\gamma\cdot i}\setminus Q_{s_t}^{\gamma\cdot (i+1)}}(s,u)=\varsigma_{P^{\gamma\cdot i}\setminus P^{\gamma\cdot (i+1)}}(s,u).$$ Furthermore, by our choice of $F^{s_t}_i$ and the fact that $s_t\in \text{Leaves}(T)\subset M_\mathcal{F}$, $F_i^{s_t}=F_i$ and $$f(s,u)=F_i(\varsigma_{Q_{s_t}^{\gamma\cdot i}\setminus Q_{s_t}^{\gamma\cdot (i+1)}}(s,u)).$$

$(ii)$ Suppose $P$ is a tree with $\text{rank}(P)=\gamma\cdot (N+1)$, $k<\omega$, and  $f:\Lambda_2(P)\to k+1$ is a function. By $(i)$, we may first fix $Q\subset P$ with $\text{rank}(Q)=\gamma\cdot (N+1)$ and a sequence $(F_i)_{i\leqslant N}$ of functions from $\lambda(\gamma)$ to $k+1$ such that for each $i\leqslant N$ and for each $(s,t)\in \Lambda_2(Q^{\gamma\cdot i}\setminus Q^{\gamma\cdot (i+1)})$, $$Q^{\gamma\cdot i}\setminus Q^{\gamma\cdot (i+1)}=Q\cap (P^{\gamma\cdot i}\setminus P^{\gamma\cdot (i+1)}),$$ $$\varsigma_{Q^{\gamma\cdot i}\setminus Q^{\gamma\cdot (i+1)}}(s,t)=\varsigma_{P^{\gamma\cdot i}\setminus P^{\gamma\cdot (i+1)}}(s,t),$$ and $$f(s,t)=F_i(\varsigma_{Q^{\gamma\cdot i}\setminus Q^{\gamma\cdot (i+1)}}(s,t)).$$   Now for each function $F:\lambda(\gamma)\to k+1$, let $S_F=\{i\leqslant N: F_i=F\}$. Since $N>n\cdot (k+1)^{\lambda(\gamma)}$, there exists $F:\lambda(\gamma)\to k+1$ such that $|S_F|\geqslant n+1$.   Let us enumerate a subset $A_F=\{s(0), \ldots, s(n)\}$ of $S_F$ so that $s(0)<\ldots <s(n)$.  Then by Lemma \ref{grow2}, there exists $R\subset Q$ with $\text{rank}(R)=\gamma\cdot (n+1)$ such that for each $i\leqslant n$, $$R^{\gamma\cdot i}\setminus R^{\gamma\cdot (i+1)}=R\cap (Q^{\gamma\cdot s(i)}\setminus Q^{\gamma\cdot (s(i)+1)})=R\cap (P^{\gamma\cdot s(i)}\setminus P^{\gamma\cdot (s(i)+1)})$$ and for each $(s,t)\in \Lambda_2(R^{\gamma\cdot i}\setminus R^{\gamma\cdot (i+1)})$, $$\varsigma_{R^{\gamma\cdot i}\setminus R^{\gamma\cdot (i+1)}}(s,t)=\varsigma_{Q^{\gamma\cdot s(i)}\setminus Q^{\gamma\cdot (s(i)+1)}}(s,t)=\varsigma_{P^{\gamma\cdot s(i)}\setminus P^{\gamma\cdot (s(i)+1)}}(s,t)$$ and  $$f(s,t)=F_{s(i)}(\varsigma_{Q^{\gamma\cdot s(i)}\setminus Q^{\gamma\cdot (s(i)+1)}}(s,t))=F(\varsigma_{Q^{\gamma\cdot s(i)}\setminus Q^{\gamma\cdot (s(i)+1)}}(s,t))=F(\varsigma_{R^{\gamma\cdot i}\setminus R^{\gamma\cdot (i+1)}}(s,t)).$$

\end{proof}

Given a tree $P$ such that $\text{rank}(P)$ is an  additively indecomposable ordinal, a subtree $Q$ of $P$, and a collection $A\subset \lambda(\text{rank}(P))$, we say the triple $(P,Q,A)$ is \emph{sparse} provided that $$\{\varsigma_P(s,t): (s,t)\in \Lambda_2(Q)\}\subset A.$$  

\begin{lemma} If $(P,Q,A)$ is sparse and  $\text{\emph{rank}}(P)=1$, $\text{\emph{rank}}(Q)\leqslant 1$. If $(P,Q,A)$ is sparse and $\text{\emph{rank}}(P)=\omega^{\omega^{\ee_0}}\cdot \ldots \cdot \omega^{\omega^{\ee_l}}$ with $\ee_0\geqslant \ldots \geqslant \ee_l$, then $$\text{\emph{rank}}(Q)\leqslant  \omega^{\omega^{\ee_0}\cdot 1_A(0)}\cdot \ldots\cdot  \omega^{\omega^{\ee_l}\cdot 1_A(l)}.$$   

\label{drain}
\end{lemma}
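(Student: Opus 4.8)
The plan is to dispose of the degenerate cases first and then induct on $l$ in the main case $\text{rank}(P)=\omega^{\omega^{\ee_0}}\cdot\ldots\cdot\omega^{\omega^{\ee_l}}$. If $\text{rank}(P)=1$ then $\Lambda_2(P)=\varnothing$, so $\text{rank}(Q)\leqslant\text{rank}(P)=1$ with nothing to prove. In the main case with $l=0$, i.e. $\text{rank}(P)=\omega^{\omega^{\ee_0}}$, either $A=\{0\}$, and then the asserted bound is $\omega^{\omega^{\ee_0}}=\text{rank}(P)\geqslant\text{rank}(Q)$; or $A=\varnothing$, and then sparseness forces $\Lambda_2(Q)=\varnothing$, so $Q$ is an antichain and $\text{rank}(Q)\leqslant 1=\omega^{0}$.

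For the inductive step write $\gamma=\text{rank}(P)$, let $\alpha=\omega^{\omega^{\ee_0}}\cdot\ldots\cdot\omega^{\omega^{\ee_{l-1}}}$ (the ordinal $\alpha_{l-1}$ of Section 3), so $\gamma=\alpha\cdot\omega^{\omega^{\ee_l}}$, and put $D_\delta=P^{\alpha\cdot\delta}\setminus P^{\alpha\cdot(\delta+1)}$ for $\delta<\omega^{\omega^{\ee_l}}$. By Proposition \ref{ted} each $D_\delta$ has rank $\alpha$, and by Proposition \ref{division}, $\varsigma_{D_\delta}=\varsigma_P|_{\Lambda_2(D_\delta)}$. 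Set $A'=A\cap\{0,\ldots,l-1\}$, a subset of $\lambda(\alpha)=l$. Since the partitions $\approx_0,\ldots,\approx_l$ of $P$ are successively coarser (a consequence of left distributivity, exactly as in the proof of Proposition \ref{division}), a pair $(s,t)\in\Lambda_2(Q)$ with $s,t$ in a common $D_\delta$ satisfies $\varsigma_P(s,t)\leqslant l-1$; hence $\varsigma_{D_\delta}(s,t)=\varsigma_P(s,t)\in A\cap\{0,\ldots,l-1\}=A'$, so the triple $(D_\delta,\,Q\cap D_\delta,\,A')$ is sparse and the inductive hypothesis gives $\text{rank}(Q\cap D_\delta)\leqslant\beta'$, where $\beta'=\omega^{\omega^{\ee_0}\cdot 1_A(0)}\cdot\ldots\cdot\omega^{\omega^{\ee_{l-1}}\cdot 1_A(l-1)}$ (using $1_{A'}(i)=1_A(i)$ for $i<l$).

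The heart of the argument is a ``stacking'' estimate: $Q^{\beta'\cdot\delta}\subset P^{\alpha\cdot\delta}$ for every ordinal $\delta$, proved by induction on $\delta$. The zero case is trivial and the limit case follows from Proposition \ref{ted}$(i)$ and $(iii)$. For $\delta=\delta_0+1$, set $R=Q^{\beta'\cdot\delta_0}$, so $R\subset P^{\alpha\cdot\delta_0}$ by hypothesis; I claim $R^{\beta'}\cap D_{\delta_0}=\varnothing$, which together with $R\subset P^{\alpha\cdot\delta_0}$ gives $Q^{\beta'\cdot(\delta_0+1)}=R^{\beta'}\subset P^{\alpha\cdot(\delta_0+1)}$. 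To see the claim, observe that for $x\in R\cap D_{\delta_0}$ and any $y\in R$ with $x<y$ we have $y\in R\subset P^{\alpha\cdot\delta_0}$ and $y\notin P^{\alpha\cdot(\delta_0+1)}$ (else $x$ would lie in the downward closed set $P^{\alpha\cdot(\delta_0+1)}$), so $y\in D_{\delta_0}$; thus $R(x)=(R\cap D_{\delta_0})(x)$, whence $\tau_R(x)=\tau_{R\cap D_{\delta_0}}(x)$ by Proposition \ref{ted}$(viii)$ and $(ix)$. Since $R\cap D_{\delta_0}\subset Q\cap D_{\delta_0}$ has rank at most $\beta'$, we get $(R\cap D_{\delta_0})^{\beta'}=\varnothing$, hence $R^{\beta'}\cap D_{\delta_0}=\varnothing$. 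Applying the estimate at $\delta=\omega^{\omega^{\ee_l}}$ yields $Q^{\beta'\cdot\omega^{\omega^{\ee_l}}}\subset P^{\gamma}=\varnothing$, so $\text{rank}(Q)\leqslant\beta'\cdot\omega^{\omega^{\ee_l}}$. If $l\in A$, this is precisely the asserted bound $\omega^{\omega^{\ee_0}\cdot 1_A(0)}\cdot\ldots\cdot\omega^{\omega^{\ee_l}\cdot 1_A(l)}$. If $l\notin A$, then every pair in $\Lambda_2(Q)$ has separation $<l$ and so lies in a single $D_\delta$; this forces $Q=\amalg_\delta(Q\cap D_\delta)$ to be a totally incomparable union, so by Proposition \ref{ted}$(vi)$, $\text{rank}(Q)=\sup_\delta\text{rank}(Q\cap D_\delta)\leqslant\beta'$, which again is the asserted bound since $1_A(l)=0$.

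The step I expect to be delicate is the successor case of the stacking estimate, and precisely the identification $\tau_R|_{R\cap D_{\delta_0}}=\tau_{R\cap D_{\delta_0}}$, which is what allows the inductive bound on $\text{rank}(Q\cap D_{\delta_0})$ to be fed back in. Its proof rests entirely on the observation that an element of $R$ lying above an element of $D_{\delta_0}$ cannot leave $D_{\delta_0}$; once that is in place, everything else is bookkeeping with the facts on derivatives, ranks, and $\tau$ collected in Proposition \ref{ted} together with the basic properties of $\varsigma$ established in Section 3.
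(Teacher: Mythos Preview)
Your proof is correct and follows essentially the same approach as the paper: induction on the number of factors, reducing to the blocks $D_\delta=P^{\alpha\cdot\delta}\setminus P^{\alpha\cdot(\delta+1)}$, applying the inductive hypothesis to bound $\text{rank}(Q\cap D_\delta)$, and then splitting into the cases $l\in A$ (stacking estimate $Q^{\beta'\cdot\delta}\subset P^{\alpha\cdot\delta}$) and $l\notin A$ (incomparable union). The only cosmetic differences are that you run the stacking estimate unconditionally rather than only in the $l\in A$ case, and in the successor step you phrase the argument via $\tau_R(x)=\tau_{R\cap D_{\delta_0}}(x)$ for $x\in R\cap D_{\delta_0}$, whereas the paper argues by contradiction that a hypothetical $t\in Q^{\beta'\cdot(\delta_0+1)}\setminus P^{\alpha\cdot(\delta_0+1)}$ would force $\text{rank}(Q^{\beta'\cdot\delta_0}[t])>\beta'$; these are the same observation (that elements of $R$ above $D_{\delta_0}$ stay in $D_{\delta_0}$) dressed differently.
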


\begin{proof} We prove the result by induction on $l<\omega$ for trees $P$ with $\lambda(\text{rank}(P))=l$. For $l=0$, a tree $P$ has $\lambda(\text{rank}(P))=0$ if and only if $\text{rank}(P)=1$. In this case $A=\varnothing$ and $\text{rank}(Q)\leqslant \text{rank}(P)\leqslant 1$, as desired.

Now assume we have the result for some $l<\omega$ and we have a tree $P$ with $$\text{rank}(P)=\omega^{\omega^{\ee_0}}\cdot \ldots \cdot \omega^{\omega^{\ee_l}},$$ $\ee_0\geqslant \ldots \geqslant \ee_l$, $Q\subset P$, and $A\subset l+1=\lambda(\text{rank}(P))$ such that $$\{\varsigma_P(s,t):(s,t)\in \Lambda_2(Q)\}\subset A.$$   Let $\gamma=1$ if $l=0$ and otherwise let $$\gamma=\omega^{\omega^{\ee_0}}\cdot \ldots \cdot \omega^{\omega^{\ee_{l-1}}}$$ and $B=A\cap l$.  We first claim that for each $\delta<\omega^{\omega^{\ee_l}}$, $(P^{\gamma\cdot \delta}\setminus P^{\gamma\cdot (\delta+1)}, Q\cap (P^{\gamma\cdot \delta}\setminus P^{\gamma\cdot (\delta+1)}), B)$ is sparse. First let us note that for each such $\delta$, $\text{rank}(P^{\gamma\cdot \delta}\setminus P^{\gamma\cdot (\delta+1)})=\gamma$ and $\lambda(\gamma)=l\supset B$.     It follows from  Proposition \ref{division} that for each such $\delta$ and $(s,t)\in \Lambda_2(P^{\gamma\cdot \delta}\setminus P^{\gamma\cdot (\delta+1)})$, $\varsigma_P(s,t)=\varsigma_{P^{\gamma\cdot \delta}\setminus P^{\gamma\cdot (\delta+1)}}(s,t)$. Therefore for any $\delta<\omega^{\omega^{\ee_l}}$ and $(s,t)\in \Lambda_2(Q\cap (P^{\gamma\cdot \delta}\setminus P^{\gamma\cdot (\delta+1)}))$, $$\varsigma_{P^{\gamma\cdot \delta}\setminus P^{\gamma\cdot (\delta+1)}}(s,t)=\varsigma_P(s,t)\in A.$$   It follows  from the fact that $\text{rank}(P^{\gamma\cdot \delta}\setminus P^{\gamma\cdot (\delta+1)})=\gamma$ and $\lambda(\gamma)=l$ that for any $\delta<\omega^{\omega^{\ee_l}}$ and $(s,t)\in \Lambda_2(P^{\gamma\cdot \delta}\setminus P^{\gamma\cdot (\delta+1)})$, $$\varsigma_{P^{\gamma\cdot \delta}\setminus P^{\gamma\cdot (\delta+1)}}(s,t) <\lambda(\gamma)=l.$$  Combining the last two facts yields that for any $\delta<\omega^{\omega^{\ee_l}}$ and any $(s,t)\in \Lambda_2(Q\cap (P^{\gamma\cdot \delta}\setminus P^{\gamma\cdot (\delta+1)}))$, $$\varsigma_{P^{\gamma\cdot \delta}\setminus P^{\gamma\cdot (\delta+1)}}(s,t)\in A\cap l=B.$$  This yields that for each $\delta<\omega^{\omega^{\ee_l}}$, $(P^{\gamma\cdot \delta}\setminus P^{\gamma\cdot (\delta+1)}, Q\cap (P^{\gamma\cdot \delta}\setminus P^{\gamma\cdot (\delta+1)}), B)$ is sparse.

 By the inductive hypothesis, this yields that for each $\delta<\omega^{\omega^{\ee_l}}$, $$\text{rank}(Q\cap (P^{\gamma\cdot \delta}\setminus P^{\gamma\cdot (\delta+1)}))\leqslant \omega^{\omega^{\ee_0}\cdot 1_B(0)}\cdot \ldots \cdot \omega^{\omega^{\ee_{l-1}}\cdot 1_B(l-1)}=:\beta.$$  Now we separate into two cases.

In the first case, $l\in A$. In this case, $$\alpha:=\omega^{\omega^{\ee_0}\cdot 1_A(0)}\cdot \ldots \cdot \omega^{\omega^{\ee_l}\cdot 1_A(l)}= \beta\cdot \omega^{\omega^{\ee_l}}.$$   We claim that for each $\delta$,  $$Q^{\beta\cdot \delta}\subset P^{\gamma\cdot \delta}.$$  Applying this with $\delta=\omega^{\omega^{\ee_l}}$ will yield that $$Q^\alpha=Q^{\beta\cdot \omega^{\omega^{\ee_l}}}\subset P^{\gamma\cdot \omega^{\omega^{\ee_l}}}=P^{\text{rank}(P)}=\varnothing,$$ and $\text{rank}(Q)\leqslant \alpha$ as desired.  We prove this result by induction on $\delta$.  The $\delta=0$ case is trivial. Assume $\delta$ is a limit ordinal and the result holds for all smaller ordinals.  Then since $\sup \{\beta\cdot \zeta:\zeta<\delta\}=\beta \cdot \delta$ and $\sup \{\gamma\cdot \zeta:\zeta<\delta\}=\gamma\cdot \delta$, an appeal to Proposition \ref{ted}$(i)$ yields that $$Q^{\beta\cdot \delta}=\bigcap_{\zeta<\delta}Q^{\beta\cdot \zeta}\subset \bigcap_{\zeta<\delta} P^{\gamma\cdot \zeta}=P^{\gamma\cdot \delta}.$$  This gives the limit ordinal case. For the successor case,  suppose that $Q^{\beta\cdot \delta}\subset P^{\gamma \cdot \delta}$.     Then $$Q^{\beta \cdot \delta}\setminus  P^{\gamma\cdot (\delta+1)}\subset Q\cap (P^{\gamma\cdot \delta}\setminus P^{\gamma\cdot (\delta+1)}).$$   Since $\text{rank}(Q\cap (P^{\gamma\cdot \delta}\setminus P^{\gamma\cdot (\delta+1)}))\leqslant \beta$, $\text{rank}(Q^{\beta\cdot \delta}\setminus P^{\gamma\cdot (\delta+1)})\leqslant \beta$.  From this it follows that $Q^{\beta\cdot (\delta+1)}\subset P^{\gamma\cdot (\delta+1)}$. Indeed, if it were not so, there would exist some $t\in Q^{\beta\cdot (\delta+1)}\setminus P^{\gamma\cdot (\delta+1)}$, and $Q^{\beta\cdot \delta}[t]\subset Q^{\beta\cdot \delta}\setminus P^{\gamma\cdot (\delta+1)}$.   To see this, note that obviously $Q^{\beta\cdot \delta}[t]\subset Q^{\beta \cdot \delta}$, while $P^{\gamma\cdot (\delta+1)}$ being downward closed in $P$ and $t\notin P^{\gamma\cdot (\delta+1)}$ imply that $$Q^{\beta\cdot \delta}[t]\cap P^{\gamma\cdot (\delta+1)}\subset P[t]\cap P^{\gamma\cdot (\delta+1)}=\varnothing.$$          Now since $t\in Q^{\beta\cdot (\delta+1)}=(Q^{\beta\cdot \delta})^\beta$, $$\text{rank}(Q^{\beta\cdot \delta}\setminus P^{\gamma\cdot (\delta+1)})\geqslant \text{rank}(Q^{\beta\cdot \delta}[t])>\beta.$$  The last inequality uses Proposition \ref{ted}$(viii)$.    But $\text{rank}(Q^{\beta\cdot \delta}\setminus P^{\gamma\cdot (\delta+1)})>\beta$ is a contradiction, so no $t\in Q^{\beta\cdot (\delta+1)}\setminus P^{\gamma\cdot (\delta+1)}$ can exist, and $Q^{\beta \cdot (\delta+1)}\subset P^{\gamma\cdot (\delta+1)}$.    This completes the inductive proof and the first case.

In the second case, $l\notin A$. In this case, $\beta=\alpha$.  We claim that $$Q=Q\cap \bigcup_{\delta<\omega^{\omega^{\ee_l}}} P^{\gamma\cdot \delta}\setminus P^{\gamma\cdot (\delta+1)}= \bigcup_{\delta<\omega^{\omega^{\ee_l}}} Q\cap (P^{\gamma\cdot \delta}\setminus P^{\gamma\cdot (\delta+1)})= \amalg_{\delta<\omega^{\omega^{\ee_l}}}Q\cap (P^{\gamma\cdot \delta}\setminus P^{\gamma\cdot (\delta+1)}).$$   That is, the union in the third expression is a totally incomparable one.    The first two equalities are obvious, so we prove the third.  If $$\bigcup_{\delta<\omega^{\omega^{\ee_l}}} Q\cap (P^{\gamma\cdot \delta}\setminus P^{\gamma\cdot (\delta+1)})$$ is not an incomparable union, then there would exist distinct $\delta, \eta<\omega^{\omega^{\ee_l}}$, $s\in Q\cap (P^{\gamma\cdot \delta}\setminus P^{\gamma\cdot (\delta+1)})$, $t\in Q\cap (P^{\gamma\cdot \eta}\setminus P^{\gamma\cdot (\eta+1)})$ such that $s<t$. Then $\tau_{P, \gamma}(s)>\tau_{P, \gamma}(t)$, which yields that $\varsigma_P(s,t)=l$, the largest possible value of $\varsigma_P$.   This contradicts the sparseness, since $A\ni \varsigma_P(s,t)=l\notin A$.  Therefore $$Q=\amalg_{\delta<\omega^{\omega^{\ee_l}}} Q\cap (P^{\gamma\cdot \delta}\setminus P^{\gamma\cdot (\delta+1)}).$$   Then since $\text{rank}(Q\cap (P^{\gamma\cdot \delta}\setminus P^{\gamma\cdot (\delta+1)}))\leqslant \beta$ for each $\delta$, and $Q$ is an incomparable union of trees of rank at most $\beta$, $Q$ has rank at most $\beta=\alpha$.  This finishes the second case.

\end{proof}

\section{The main theorem}

We are now prepared to prove the main theorem, which we now recall in slightly different terminology.

\begin{theorem}  Let $\gamma$ be an additively indecomposable ordinal.  For any tree $P$ with  $\text{\emph{rank}}(P)= \gamma$, any $k<\omega$, and any $f:\Lambda_2(P)\to k+1$, there exist a subtree $Q$ of $P$ with $\text{\emph{rank}}(Q)=\gamma$ and $F:\lambda(\gamma)\to k+1$ such that for any $(s,t)\in \Lambda_2(Q)$, $F(\varsigma_Q(s,t))=f(s,t)$  and $\varsigma_Q(s,t)=\varsigma_P(s,t)$.   

That is, every additively indecomposable ordinal is tractable. 

\label{big show}

\end{theorem}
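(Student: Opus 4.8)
The plan is to prove by transfinite induction on the additively indecomposable ordinal $\gamma$ that $\gamma$ is tractable. The case $\gamma=1$ is vacuous, since then $\Lambda_2(P)=\varnothing$. For the inductive step, assume every additively indecomposable $\mu<\gamma$ is tractable and write $\gamma=\beta\cdot\omega^{\omega^{\ee}}$, where $\beta=\omega^{\omega^{\ee_0}}\cdot\ldots\cdot\omega^{\omega^{\ee_{l-1}}}$ with $\ee_0\geqslant\ldots\geqslant\ee_{l-1}\geqslant\ee$ (so $\lambda(\beta)=l$; take $\beta=1$, $l=0$ when $\lambda(\gamma)=1$). Since $\beta<\gamma$, $\beta$ is tractable. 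Given a tree $P$ with $\text{rank}(P)=\gamma$ and $f:\Lambda_2(P)\to k+1$, I would first apply Lemma \ref{grow3} (case $(i)$, $(ii)$, or $(iii)$ according as $\ee=0$, $\ee$ a successor, or $\ee$ a limit) to obtain $R\subset\text{Roots}(P)$, ordinals $\eta_t<\omega^{\omega^{\ee}}$ with $\sup_{t\in R}\eta_t=\omega^{\omega^{\ee}}$, and nodes $s_t\in\text{Leaves}(P[t]^{\beta\cdot\eta_t})$, so that $\text{rank}(P(s_t))=\beta\cdot\eta_t$ for each $t$.

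The core of the argument is to replace each $P(s_t)$ by a subtree $Q_t$ such that: every pair of $\Lambda_2(Q_t)$ lying in two distinct $\beta$-blocks $Q_t^{\beta\cdot\xi}\setminus Q_t^{\beta\cdot(\xi+1)}$ receives a single colour $j_t$; every pair $(u,v)$ lying inside a common $\beta$-block receives colour $F_t(\varsigma_P(u,v))$ for a single $F_t:l\to k+1$; within each $\beta$-block $\varsigma_{Q_t}$ agrees with the separation inherited from $P$; and $\sup_t\text{rank}(Q_t)=\gamma$. When $\ee=0$ the piece $P(s_t)$ has finite rank $\beta\cdot(n_t+1)$: Corollary \ref{CD} (which needs only $\beta$ additively indecomposable) monochromatises the pairs lying in distinct $\beta$-blocks, and then Lemma \ref{medieval times}$(ii)$ (which uses tractability of $\beta$) makes the within-block colour a function of separation, producing such a $Q_t$ of rank $\beta\cdot(m_t+1)$ with $\sup_t m_t=\omega$. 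When $\ee>0$ the ordinal $\beta\cdot\eta_t$ is additively indecomposable and $<\gamma$, hence tractable by the inductive hypothesis; applying this inside $P(s_t)$ yields $\widetilde{Q}_t$ with $f=G_t\circ\varsigma_{\widetilde{Q}_t}$ and $\varsigma_{\widetilde{Q}_t}=\varsigma_{P(s_t)}$, and one checks via Proposition \ref{ted}$(viii)$ that $\varsigma_{\widetilde{Q}_t}(u,v)=\varsigma_P(u,v)<l$ for pairs inside one $\beta$-block while $\varsigma_{\widetilde{Q}_t}(u,v)\geqslant l$ (and $\varsigma_P(u,v)=l$) otherwise. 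If $\ee$ is a limit, then $\lambda(\beta\cdot\eta_t)=l+1$, so a cross-block pair is forced to have separation $l$ and $f$ is already constant on such pairs; take $Q_t=\widetilde{Q}_t$. Having produced the pieces, pigeonhole over $t$ (only $(k+1)^{l+1}$ choices of $(F_t,j_t)$) to pass to $M\subset R$ on which $(F_t,j_t)=(F,j)$ while still $\sup_{t\in M}\text{rank}(Q_t)=\gamma$, and set $Q=\amalg_{t\in M}Q_t$. Then $\text{rank}(Q)=\gamma$ by Proposition \ref{ted}$(vi)$, and $\varsigma_Q=\varsigma_P|_{\Lambda_2(Q)}$ — with $\varsigma_Q(u,v)<l$ exactly on the within-block pairs — follows from Proposition \ref{ted}$(vi)$, $(viii)$ and the recorded block/separation data of the $Q_t$ (this is Lemma \ref{grow4}$(ii)$ when $\ee=0$ and Lemma \ref{grow4}$(i)$ when $\ee$ is a limit, and a direct check when $\ee$ is a successor). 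Setting $F'(i)=F(i)$ for $i<l$ and $F'(l)=j$, every $(u,v)\in\Lambda_2(Q)$ with $\varsigma_Q(u,v)<l$ has $f(u,v)=F(\varsigma_P(u,v))=F'(\varsigma_Q(u,v))$, and every $(u,v)$ with $\varsigma_Q(u,v)=l$ has $f(u,v)=j=F'(\varsigma_Q(u,v))$; hence $\gamma$ is tractable.

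The one place where real work remains is the successor case $\ee=\delta+1$, since Lemma \ref{grow3}$(ii)$ forces $\eta_t=(\omega^{\omega^{\delta}})^{n_t}$, so $\lambda(\beta\cdot\eta_t)=l+n_t$ grows with $t$ and $G_t$ need not be constant on the cross-block separation classes $\{l,\ldots,l+n_t-1\}$. The remedy is to pick $A'\subset\{0,\ldots,n_t-1\}$ with more than $n_t/(k+1)$ elements on which $i\mapsto G_t(l+i)$ is constant, equal to some $j_t$, and to replace $\widetilde{Q}_t$ by a $\bigl(\{0,\ldots,l-1\}\cup(l+A')\bigr)$-contraction $Q_t$, which exists by Corollary \ref{henderson}; since the cross-block separation classes of $Q_t$ are carried by the contraction into the $G_t$-monochromatic set $l+A'$, this forces $f\equiv j_t$ on the pairs of $Q_t$ in distinct $\beta$-blocks while retaining all within-block data, and $\text{rank}(Q_t)=\beta\cdot(\omega^{\omega^{\delta}})^{|A'|}$ with $\sup_t|A'|=\omega$, so the supremum of the piece-ranks is still $\gamma$. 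Because this contraction strictly decreases rank inside $P(s_t)$, Lemma \ref{grow4}$(i)$ is not directly applicable and one must verify $\varsigma_Q=\varsigma_P|_{\Lambda_2(Q)}$ by hand from Proposition \ref{ted}$(vi)$, $(viii)$. I expect the principal burden of a full write-up to be precisely the coherent propagation of the many separation-preservation identities through the successive appeals to Corollary \ref{CD}, Lemma \ref{medieval times}, Corollary \ref{henderson}, and Lemma \ref{grow2}.
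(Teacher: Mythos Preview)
Your proposal is correct and takes essentially the same approach as the paper: transfinite induction with the three-case split on $\ee$ (zero, successor, limit), using Corollary~\ref{CD} and Lemma~\ref{medieval times} for $\ee=0$, the inductive hypothesis plus an $A$-contraction via Corollary~\ref{henderson} for $\ee$ a successor, and a direct application of the inductive hypothesis for $\ee$ a limit. Your caution about Lemma~\ref{grow4}$(i)$ in the successor case is apt---the paper invokes it there even though the rank hypothesis is not literally met after contraction, but the needed verification is routine along the lines you indicate.
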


\begin{proof} We prove by induction on $\xi$ that $\omega^\xi$ is tractable. The $\xi=0$, $\omega^\xi=1$ case is trivial.    Assume $0<\xi$,  $$\xi=\omega^{\ee_0}+  \ldots +\omega^{\ee_l}$$ with $\ee_0\geqslant \ldots \geqslant \ee_l$, is such that $\omega^\zeta$ is tractable for each $\zeta<\xi$.        If $l=0$, let $\gamma=1$, and otherwise let  $$\gamma=\omega^{\omega^{\ee_0}}\cdot \ldots \cdot \omega^{\omega^{\ee_{l-1}}}.$$ Note that $\omega^\xi=\gamma\cdot \omega^{\omega^{\ee_l}}$.     For convenience, let $\ee=\ee_l$. We prove $\omega^\xi$ is tractable. We consider three cases.

Case $1$, $\ee=0$: We prove $\gamma\cdot \omega^{\omega^0}=\gamma\cdot \omega$ is tractable. Let $P$ be a tree with $\text{rank}(P)=\gamma\cdot \omega$, $k<\omega$, and $f:\Lambda_2(P)\to k+1$.  By Lemma \ref{grow3}, we may find $R\subset \text{Roots}(P)$, $\{n_t: t\in R\}\subset \omega$, and $\{s_t: t\in R\}$ such that $\sup_{t\in R}n_t=\omega$ and $s_t\in \text{Leaves}(P^{\gamma\cdot (n_t+1)}[t])$. Let $R_t=P(s_t)$, so $\text{rank}(R_t)=\gamma\cdot (n_t+1)$.   For each $q<\omega$, let $N_q=q\cdot (k+1)^{\lambda(\gamma)}+1$.     Let $r_q=r(N_q, 1, k)$, where $r(p,1,k)$ is the number from Theorem \ref{Ramsey}$(ii)$.    We may recursively select distinct roots $t_1, t_2, \ldots$, such that  $t_q\in R$ and $n_{t_q}\geqslant r_q$ for all $q<\omega$.    Indeed, let $t_1\in R$ be such that $n_{t_1}>r_1$.   If $t_1, \ldots, t_q$ have been chosen, fix $t_{q+1}\in R$ such that $$n_{t_{q+1}}>\max\{r_{q+1}, n_{t_1}, \ldots, n_{t_q}\}.$$   Now let $S_q=R_{t_q}\setminus R_{t_q}^{\gamma\cdot (r_q+1)}$. Since $\text{rank}(R_{t_q})\geqslant \gamma\cdot (r_q+1)$, $\text{rank}(S_q)=\gamma\cdot (r_q+1)$. Note that $\tau_{S_q}=\tau_{R_{t_q}}|_{S_q}=\tau_P|_{S_q}$. From this it follows that for each $i\leqslant r_q$, $$S_q^{\gamma\cdot i}\setminus S_q^{\gamma\cdot (i+1)}=S_q\cap (P^{\gamma\cdot i}\setminus P^{\gamma\cdot (i+1)})$$ and $$\varsigma_{S_q^{\gamma\cdot i}\setminus S_q^{\gamma\cdot (i+1)}}(s,u)=\varsigma_{P^{\gamma\cdot i}\setminus P^{\gamma\cdot (i+1)}}(s,u)$$ for all $(s,u)\in \Lambda_2(S_q^{\gamma\cdot i}\setminus S_q^{\gamma\cdot (i+1)})$.  By Corollary \ref{CD}, there exist a subset $T_q$ of $S_q$ with $\text{rank}(T_q)=\gamma\cdot (N_q+1)$, $j_q\leqslant k$,  and numbers $a(0)<\ldots <a(N_q)\leqslant r_q$ such that for each $i\leqslant N_q$, $T_q^{\gamma\cdot i}\setminus T_q^{\gamma\cdot (i+1)}=T_q\cap (S_q^{\gamma\cdot a(i)}\setminus S_q^{\gamma\cdot (a(i)+1)})$, $$\varsigma_{T_q^{\gamma\cdot i}\setminus T_q^{\gamma\cdot (i+1)}}(s,u)=\varsigma_{S_q^{\gamma\cdot a(i)}\setminus S_q^{\gamma\cdot (a(i)+1)}}(s,u)=\varsigma_{P^{\gamma\cdot a(i)}\setminus P^{\gamma\cdot (a(i)+1)}}(s,u)$$ for all $(s,u)\in \Lambda_2(T_q^{\gamma\cdot i}\setminus T_q^{\gamma\cdot (i+1)})$, and if $(s,u)\in \Lambda_2(T_q)$ are such that $\tau_{T_q, \gamma}(s)>\tau_{T_q,\gamma}(u)$, then $f(s,u)=j_q$.     By the definition of $N_q$ and Lemma \ref{medieval times}$(ii)$ applied to the restriction of $f$ to $\Lambda_2(T_q)$,  there exist $Q_q\subset T_q$,  a function $F_q:\lambda(\gamma)\to k+1$, and numbers $b(0)<\ldots <b(q)\leqslant N_q$ such that $\text{rank}(Q_q)=\gamma\cdot (q+1)$ and for each $i\leqslant q$, $Q_q^{\gamma\cdot i}\setminus Q_q^{\gamma\cdot (i+1)}=Q_q\cap (T_q^{\gamma\cdot b(i)}\setminus T_q^{\gamma\cdot (b(i)+1)})$ and for each $(s,u)\in \Lambda_2(Q_q^{\gamma\cdot i}\setminus Q_q^{\gamma\cdot (i+1)})$, $$\varsigma_{Q_q^{\gamma\cdot i}\setminus Q_q^{\gamma\cdot (i+1}}(s,u)= \varsigma_{T_q^{\gamma\cdot b(i)}\setminus T_q^{\gamma\cdot (b(i)+1)}}(s,u),$$ and $f(s,u)=F_q(\varsigma_{Q_q^{\gamma\cdot i}\setminus Q_q^{\gamma\cdot (i+1)}}(s,u))$.   For each $G:\lambda(\gamma)\to k+1$ and $j\leqslant k$, let $$M_{G,j}=\{q<\omega: F_q=G \text{\ and\ }j_q=j\}.$$  Then since $\omega=\cup_{G,j}M_{G,j}$, there exist $G:\lambda(\gamma)\to k+1$ and $j\leqslant k$ such that $\sup M_{G,j}=\omega$.     Let $M=M_{G,j}$ and let $Q=\amalg_{q\in M} Q_q.$  Define $F:\lambda(\gamma\cdot \omega)=l+1\to k+1$ by $F(i)=G(i)$ if $i<l$ and $F(l)=j$. We claim that these $Q$ and $F$ are as in the definition of tractable.  We note that $\text{rank}(Q)=\text{rank}(P)$ and $\varsigma_Q=\varsigma_P|_{\Lambda_2(Q)}$ by Lemma \ref{grow4}$(ii)$.  For $(s,u)\in \Lambda_2(Q)$, there exists $q\in M$ such that $(s,u)\in \Lambda_2(Q_q)$.  Then either there exists $i\leqslant q$ such that $(s,u)\in \Lambda_2(Q^{\gamma\cdot i}_q\setminus Q^{\gamma\cdot (i+1)}_q)$, in which case $\varsigma_Q(s,u)<l$ and $$f(s,u)= G(\varsigma_{Q^{\gamma\cdot i}_q\setminus Q^{\gamma\cdot (i+1)}_q}(s,u))=G(\varsigma_Q(s,u))=F_q(\varsigma_Q(s,u))=F(\varsigma_Q(s,u)),$$ or no such $i$ exists, in which case $$f(s,u)=j=F(l)=F(\varsigma_Q(s,u)).$$  Here we have used that $\tau_{Q_q}=\tau_Q|_{Q_q}$, so that $\varsigma_{Q_q^{\gamma\cdot i}\setminus Q_q^{\gamma\cdot (i+1)}}=\varsigma_{Q^{\gamma\cdot i}\setminus Q^{\gamma\cdot (i+1)}}|_{\Lambda_2(Q_q^{\gamma\cdot i}\setminus Q_q^{\gamma\cdot (i+1)})}$, and $\varsigma_Q(s,u)=l$ implies $\tau_{Q_q, \gamma}(s)>\tau_{Q_q, \gamma}(t)$, which implies $\tau_{T_q, \gamma}(s)>\tau_{T_q, \gamma}(u)$.    

Case $2$, $\ee$ is a successor: Write $\ee=\delta+1$. By the inductive hypothesis,  $\gamma\cdot (\omega^{\omega^\delta})^n$ is tractable for each $n<\omega$. Let $P$ be a tree with $\text{rank}(P)=\gamma\cdot \omega^{\omega^\ee}$, $k<\omega$, and $f:\Lambda_2(P)\to k+1$.       By case $(ii)$ of Lemma \ref{grow3}, we may find $R\subset \text{Roots}(P)$, $\{n_t: t\in R\}\subset \omega$ such that $\sup_{t\in R}n_t=\omega$, and a collection $\{s_t: t\in R\}\subset P$ such that $s_t\in \text{Leaves}(P^{\gamma\cdot (\omega^{\omega^\delta})^{n_t}}[t])$. Let $R_t=P(s_t)$, so $\text{rank}(R_t)=\gamma\cdot (\omega^{\omega^\delta})^{n_t}$ and $\tau_{R_t}=\tau_P|_{R_t}$.      Let us also note that for any $(s,u)\in \Lambda_2(R_t)$, $\varsigma_P(s,u)<l$ if and only if $\varsigma_{R_t}(s,u)<l$, and in this case $\varsigma_P(s,u)=\varsigma_{R_t}(s,u)$.  To see this, note that $\varsigma_P(s,u)<l$ (resp. $\varsigma_{R_t}(s,u)<l$) if and only if there exists $\eta<(\omega^{\omega^\delta})^{n_t}$ such that $s,u\in P^{\gamma\cdot \eta}\setminus P^{\gamma\cdot (\eta+1)}$  if and only if there exists $\eta<(\omega^{\omega^\delta})^{n_t}$ such that $s,u\in R_t^{\gamma\cdot \eta}\setminus R_t^{\gamma\cdot (\eta+1)}$.  In this case, Proposition \ref{division} yields that $\varsigma_P(s,u)=\varsigma_{R_t}(s,u)$.  If no such $\eta$ exists, then $\varsigma_P(s,u)=l$.

  By the inductive hypothesis, $\gamma\cdot (\omega^{\omega^\delta})^{n_t}$ is tractable.    Let us note that $\lambda(\gamma\cdot (\omega^{\omega^\delta})^{n_t})=\lambda(\gamma)+n_t=l+n_t$.  By the inductive hypothesis, there exist $F_t:l+n_t\to k+1$ and $T_t\subset R_t$ with $\text{rank}(T_t)=\text{rank}(R_t)$ such that $\varsigma_{T_t}=\varsigma_{R_t}|_{\Lambda_2(T_t)}$ and $f(s,u)=F_t(\varsigma_{T_t}(s,u))$ for each $(s,u)\in \Lambda_2(T_t)$.  For each $G:l\to k+1$, let $$S_G=\{t\in R: G=F_t|_l\}.$$  Since $\sup_{t\in R}n_t=\omega$ and $R=\cup_G S_G$, there exists $G:l\to k+1$ such that $\sup_{t\in S_G}n_t=\omega$. Let $S=S_G$.   For each $t\in S$ and $j\leqslant k$, let $$B_{t,j}=\{i: l\leqslant i<l+n_t, F_t(i)=j\}=\{i\in \lambda(\text{rank}(T_t)): F_t(i)=j\}\setminus l.$$   Let us note that there exists $j\leqslant k$ such that $\sup_{t\in S}|B_{t,j}|=\omega$, since for each $t\in S$, $n_t=\sum_{j\leqslant k}|B_{t,j}|$ and $\sup_{t\in S}n_t=\omega$.    Let us now fix $j\leqslant k$ such that $\sup_{t\in S}|B_{t,j}|=\omega$.    Define $F:l+1\to k+1$ by $F(i)=G(i)$ if $i<l$ and $F(l)=j$.    Now let us fix distinct $t_1, t_2, \ldots\in S$ such that $|B_{t_i, j}|>i$ for all $i<\omega$. We may do this by choosing $t_1$ such that $|B_{t_1, j}|>1$ and, once $t_1, \ldots, t_i$ are chosen, choose $t_{i+1}\in S$ such that $$|B_{t_{i+1}, j}|>\max\{i+1, |B_{t_1, j}|, \ldots, |B_{t_i, j}|\}.$$

	For $i<\omega$, let $m_i=|B_{t_i, j}|$, $A_i=l\cup B_{t_i,j}$, and $Q_i$ be an $A_i$-contraction of $T_{t_i}$.  Note that by the definition of $A_i$-contraction and since $l\subset A_i$,  $$\text{rank}(Q_i)=\gamma\cdot (\omega^{\omega^\delta})^{m_i}.$$   Furthermore, for $(s,u)\in \Lambda_2(Q_i)$, $\varsigma_{Q_i}(s,u)<l$ if and only if $\varsigma_{T_{t_i}}(s,u)<l$ if and only if $\varsigma_{R_{t_i}}(s,u)<l$ if and only if $\varsigma_P(s,u)<l$, and in this case the four quantities $$\varsigma_{Q_i}(s,u), \varsigma_{T_{t_i}}(s,u), \varsigma_{R_{t_i}}(s,u), \varsigma_P(s,u)$$ are equal. The only part of this claim which does not follow immediately from the construction is the equality $\varsigma_{Q_i}(s,u)=\varsigma_{T_{t_i}}(s,u)$ if $\varsigma_{Q_i}(s,u)<l$. Let us verify the claim for $\varsigma_{Q_i}(s,u)$ and $\varsigma_{T_{t_i}}(s,u)$.    Let  $A_i=\{a(0), \ldots, a(r)\}$ for some $r$ and $a(0)<\ldots <a(r)$. By the definition of $A_i$ contraction together with the fact that $l\subset A_i$, $a(n)=n$ for each $n<l$.   Therefore  $a(\varsigma_{Q_i}(s,u))= \varsigma_{T_{t_i}}(s,u)$ if either one of these quantities is less than $l$.

				Now let $Q=\amalg_{i<\omega} Q_i$. We claim that $Q$ and $F$ are as in the definition of tractable, which will finish Case $2$.  Let us note that $\text{rank}(Q)=\text{rank}(P)$ and $\varsigma_Q=\varsigma_P|_{\Lambda_2(Q)}$ by Lemma \ref{grow4}$(i)$.    We  show that $f(s,u)=F(\varsigma_Q(s,u))$ for each $(s,u)\in \Lambda_2(Q)$. To that end, assume $(s,u)\in \Lambda_2(Q)$, so $(s,u)\in \Lambda_2(Q_i)$ for some $i<\omega$.   If $\varsigma_Q(s,u)<l$, there exists $\eta<(\omega^{\omega^\delta})^{m_i}$ such that $(s,u)\in Q^{\gamma\cdot \eta}_i\setminus Q^{\gamma\cdot (\eta+1)}_i$.    Since  $l\subset A_i$ and $Q_i$ is an $A_i$-contraction of $T_{t_i}$, \begin{align*} f(s,u) & =G(\varsigma_{T^{\gamma\cdot \eta}_{t_i}\setminus T^{\gamma\cdot (\eta+1)}_{t_i}}(s,u))=G(\varsigma_{Q_i^{\gamma\cdot \eta}\setminus Q_i^{\gamma\cdot (\eta+1)}}(s,u)) \\ & =G(\varsigma_{Q^{\gamma\cdot \eta}\setminus Q^{\gamma\cdot (\eta+1)}}(s,u))=G(\varsigma_Q(s,u))=F(\varsigma_Q(s,u)).\end{align*}    If $\varsigma_Q(s,u)=l$, then since $$\varsigma_{T_{t_i}}(s,u) = a(\varsigma_{Q_i}(s,u))\in A_i\setminus l=B_{t_i,j},$$ $f(s,u)=j=F(l)$.


Case $3$, $\ee$ is a limit ordinal: Let $P$ be a tree with $\text{rank}(P)=\gamma\cdot \omega^{\omega^\ee}$, $k<\omega$, and $f:\Lambda_2(P)\to k+1$ be a function.    By case $(iii)$ of Lemma \ref{grow3}, we may find $R\subset \text{Roots}(P)$, $\{\ee_t: t\in R\}\subset \ee$ such that $\sup_{t\in R} \ee_t=\ee$, and a collection $\{s_t:t\in R\}\subset P$ such that $s_t\in \text{Leaves}(P^{\gamma\cdot \omega^{\omega^{\ee_t}}}[t])$.   Let $R_t=P(s_t)$, so $\text{rank}(R_t)=\gamma\cdot \omega^{\omega^{\ee_t}}$,  $\tau_{R_t}=\tau_P|_{R_t}$, and $\varsigma_{R_t}=\varsigma_P|_{\Lambda_2(R_t)}$. Here we are using the fact that $\lambda(\gamma\cdot \omega^{\omega^{\ee_t}})=\lambda(\gamma\cdot \omega^{\omega^\ee})$ and for a pair $(s,u)\in \Lambda_2(R_t)$, either there exists $\eta<\omega^{\omega^{\ee_t}}$ such that $s,u\in R_t^{\gamma\cdot \eta}\setminus R_t^{\gamma\cdot (\eta+1)}$, in which case $$\varsigma_{R_t}(s,u)= \varsigma_{R_t^{\gamma\cdot \eta}\setminus R_t^{\gamma\cdot \eta}}(s,u)=\varsigma_{P^{\gamma\cdot \eta}\setminus P^{\gamma \cdot (\eta+1)}}(s,u)=\varsigma_P(s,u) $$ by Proposition \ref{division}, or $\varsigma_{R_t}(s,u)=l=\varsigma_P(s,u)$.     Now by tractability of $\gamma\cdot \omega^{\omega^{\ee_t}}$, we may find $Q_t\subset R_t$ and $F_t:\lambda(\gamma\cdot \omega^{\omega^{\ee_t}})=\lambda(\gamma\cdot \omega^{\omega^\ee})\to k+1$ such that $\text{rank}(Q_t)=\gamma\cdot \omega^{\omega^{\ee_t}}$, $\varsigma_{Q_t}=\varsigma_{R_t}|_{\Lambda_2(Q_t)}=\varsigma_P|_{\Lambda_2(Q_t)}$,  and $f(s,u)=F_t(\varsigma_{R_t}(s,u))=F_t(\varsigma_P(s,u))$ for each  $(s,u)\in\Lambda_2(Q_t)$.    Now for each $F:\lambda(\gamma\cdot \omega^{\omega^\ee})\to k+1$, let $$M_F=\{t\in R: F_t=F\}.$$  Since $\sup_{t\in R} \ee_t=\ee$, there exists $F:\lambda(\gamma\cdot \omega^{\omega^\ee})\to k+1$ such that $$\sup_{t\in M_F} \ee_t=\ee.$$  Let $Q=\amalg_{t\in M_F} Q_t$.    Then this $Q$ and $F$ are as in the definition of tractable.  Since $P$, $k$, and $f$ were arbitrary, this concludes Case $3$. Let us verify that the conclusions are satisfied.  It follows from Lemma \ref{grow4}$(i)$ that $\text{rank}(Q)=\text{rank}(P)$ and $\varsigma_Q=\varsigma_P|_{\Lambda_2(Q)}$.    Note that for each $t\in M_F$ and $(s,u)\in \Lambda_2(Q_t)$, $\varsigma_Q(s,u)=\varsigma_{Q_t}(s,u)$.  This is because $\tau_{Q_t, \gamma}=\tau_{Q, \gamma}|_{Q_t}$, and so either $\varsigma_Q(s,u)=l=\varsigma_{Q_t}(s,u)$ if $\tau_{Q, \gamma}(s)>\tau_{Q, \gamma}(u)$, or $\varsigma_{Q, \gamma}(s)=\eta=\varsigma_{Q, \gamma}(u)$ for some $\eta<\omega^{\omega^{\ee_t}}$, in which case $$\varsigma_Q(s,u)=\varsigma_{Q^{\gamma\cdot \eta}\setminus Q^{\gamma\cdot (\eta+1)}}(s,u)=\varsigma_{Q_t^{\gamma\cdot \eta}\setminus Q_t^{\gamma\cdot (\eta+1)}}(s,u)=\varsigma_{Q_t}(s,u).$$  Thus for $(s,u)\in \Lambda_2(Q)$, there exists $t\in M_F$ such that $(s,u)\in \Lambda_2(Q_t)$, and $f(s,u)=F(\varsigma_{Q_t}(s,u))=F(\varsigma_Q(s,u))$.

\end{proof}

Theorem \ref{big show} is optimal in the following sense.

\begin{theorem} Fix  $l,k<\omega$. \begin{enumerate}[(i)]\item Suppose $\ee_0\geqslant \ldots \geqslant \ee_l$, $P$ is a tree with $\text{\emph{rank}}(P)= \omega^{\omega^{\ee_0}}\cdot \ldots \cdot \omega^{\omega^{\ee_l}}=:\gamma$, $f:\Lambda_2(P)\to k+1$ is a function.  If $Q\subset P$ and $F:l+1\to k+1$ are as in Theorem \ref{big show}, $j\leqslant k$,  $A=\{i\leqslant l: F(i)=j\}$, and $\alpha =\omega^{\omega^{\ee_0}\cdot 1_A(0)}\cdot \ldots \cdot \omega^{\omega^{\ee_l}\cdot 1_A(l)},$ then there exists a subtree $R$ of $P$ with $\text{\emph{rank}}(R)=\alpha$ and such that $f|_{\Lambda_2(R)}\equiv j$.       \item Let $F:l+1\to k+1$ be a function.   Fix  $\ee_0\geqslant \ldots \geqslant \ee_l$ and a tree $P$ with $\text{\emph{rank}}(P)=\omega^{\omega^{\ee_0}}\cdot \ldots \cdot \omega^{\omega^{\ee_l}}$ and define $f:\Lambda_2(P)\to k+1$ by $f(s,t)=F(\varsigma_P(s,t)).$  Then if $j\leqslant k$, $A=\{i\leqslant l: F(i)=j\}$, $\alpha=\omega^{\omega^{\ee_0}\cdot 1_A(0)} \ldots \omega^{\omega^{\ee_l}\cdot 1_A(l)},$ and $Q$ is any subtree of $P$ such that $f|_{\Lambda_2(Q)}\equiv j$, it follows that $\text{\emph{rank}}(Q) \leqslant \alpha$. \end{enumerate}

\label{optimal prime}
\end{theorem}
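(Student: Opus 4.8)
The plan is to derive both parts quickly from results already in hand: part $(i)$ from the existence of $A$-contractions (Corollary \ref{henderson}), and part $(ii)$ from the sparseness estimate (Lemma \ref{drain}). In each case the combinatorial substance has already been spent on those earlier results, so the work here is purely a matter of checking hypotheses and unwinding the definitions of $\varsigma$, of $A$-contraction, and of sparse triple.

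For part $(i)$, the case $A=\varnothing$ is immediate: then $\alpha=1$, and any singleton $R=\{t\}\subset P$ satisfies $\text{rank}(R)=1=\alpha$ and $\Lambda_2(R)=\varnothing$, so $f|_{\Lambda_2(R)}\equiv j$ holds vacuously. Now suppose $A\neq\varnothing$ and write $A=\{a(0),\dots,a(q)\}$ with $a(0)<\dots<a(q)$. The tree $Q$ has rank $\gamma$, an additively indecomposable ordinal, and $A\subset l+1=\lambda(\gamma)$, so Corollary \ref{henderson} provides an $A$-contraction $R$ of $Q$. By the definition of $A$-contraction, $\text{rank}(R)=\omega^{\omega^{\ee_{a(0)}}}\cdot\ldots\cdot\omega^{\omega^{\ee_{a(q)}}}=\alpha$ (each factor $\omega^{\omega^{\ee_i}\cdot 1_A(i)}$ with $i\notin A$ equals $\omega^0=1$ and may be dropped), and $\varsigma_Q(s,t)=a(\varsigma_R(s,t))$ for every $(s,t)\in\Lambda_2(R)$. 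Then $R\subset Q\subset P$ is the required subtree of $P$: for $(s,t)\in\Lambda_2(R)\subset\Lambda_2(Q)$ we have $a(\varsigma_R(s,t))\in A=\{i\leqslant l:F(i)=j\}$, and since $Q,F$ are as in Theorem \ref{big show}, $f(s,t)=F(\varsigma_Q(s,t))=F(a(\varsigma_R(s,t)))=j$; hence $f|_{\Lambda_2(R)}\equiv j$.

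For part $(ii)$, let $Q\subset P$ be any subtree with $f|_{\Lambda_2(Q)}\equiv j$. For each $(s,t)\in\Lambda_2(Q)$ we have $F(\varsigma_P(s,t))=f(s,t)=j$, so $\varsigma_P(s,t)\in A$; thus $\{\varsigma_P(s,t):(s,t)\in\Lambda_2(Q)\}\subset A$, i.e., the triple $(P,Q,A)$ is sparse. Since $\text{rank}(P)=\omega^{\omega^{\ee_0}}\cdot\ldots\cdot\omega^{\omega^{\ee_l}}$ with $\ee_0\geqslant\ldots\geqslant\ee_l$ and $A\subset l+1$, Lemma \ref{drain} yields $\text{rank}(Q)\leqslant\omega^{\omega^{\ee_0}\cdot 1_A(0)}\cdot\ldots\cdot\omega^{\omega^{\ee_l}\cdot 1_A(l)}=\alpha$, which reads $\text{rank}(Q)\leqslant 1=\alpha$ when $A=\varnothing$, in agreement with the conventions.

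I do not expect a genuine obstacle here. The only point demanding any attention is the bookkeeping identification of $\alpha$ — with the rank of the $A$-contraction in $(i)$ and with the sparseness bound in $(ii)$ — but this is immediate from $\omega^{\beta+\gamma}=\omega^\beta\cdot\omega^\gamma$ together with the observation that the factor $\omega^{\omega^{\ee_i}\cdot 1_A(i)}$ equals $1$ precisely when $i\notin A$. All the force of the theorem already resides in Corollary \ref{henderson} and Lemma \ref{drain}.
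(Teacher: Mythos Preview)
Your proof is correct and follows exactly the approach in the paper: part $(i)$ via an $A$-contraction of $Q$ from Corollary \ref{henderson}, and part $(ii)$ by observing that $(P,Q,A)$ is sparse and invoking Lemma \ref{drain}. You have simply supplied more of the verification details than the paper does, including explicitly separating out the $A=\varnothing$ case (which the paper's definition of $\varnothing$-contraction already absorbs).
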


\begin{proof}$(i)$ Let $R$ be an $A$-contraction of $Q$. Such an $A$-contraction exists by Corollary \ref{henderson}. 

$(ii)$ This follows from Lemma \ref{drain}, noting that $(P,Q,A)$ must be sparse.

\end{proof}

The following is an immediate consequence of Theorem \ref{big show}.  It follows from the fact that the infinite, multiplicatively indecomposable ordinals are those infinite, additively indecomposable ordinals with precisely one term in the product decomposition.

\begin{corollary} If $\xi$ is any ordinal, $k<\omega$, $P$ is a well-founded tree with $\text{\emph{rank}}(P)\geqslant \omega^{\omega^\xi}$, and $f:\Lambda_2(P)\to k+1$ is a function, then there exists a subtree $Q$ of $P$ with $\text{\emph{rank}}(Q)=\omega^{\omega^\xi}$ such that $f|_{\Lambda_2(Q)}$ is constant.

\label{cheese down}

\end{corollary}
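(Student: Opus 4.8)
\textbf{Proof proposal for Corollary \ref{cheese down}.} The plan is to reduce the statement directly to Theorem \ref{big show}. Put $\gamma=\omega^{\omega^\xi}$. This ordinal is additively indecomposable, being $\omega^\beta$ with $\beta=\omega^\xi$, and it is infinite and multiplicatively indecomposable; hence, as recalled just before the statement, its product decomposition consists of the single factor $\omega^{\omega^\xi}$, so that in the notation of Section $3$ we have $\lambda(\gamma)=1$. Thus any function $F\colon\lambda(\gamma)\to k+1$ has domain $\{0\}$, and for any tree $R$ with $\text{rank}(R)=\gamma$ the separation $\varsigma_R\colon\Lambda_2(R)\to\lambda(\gamma)=\{0\}$ is constantly $0$.

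First I would pass to a subtree of rank exactly $\gamma$. Since $P$ is well-founded with $\text{rank}(P)\geqslant\gamma$, Proposition \ref{ted}$(x)$ gives $\text{rank}(P\setminus P^\gamma)=\gamma$, so, replacing $P$ by $P_0:=P\setminus P^\gamma$ and $f$ by its restriction to $\Lambda_2(P_0)\subset\Lambda_2(P)$, we may assume $\text{rank}(P)=\gamma$. Now apply Theorem \ref{big show} to $P$, $k$, and $f$: since $\gamma$ is additively indecomposable it is tractable, so there exist a subtree $Q$ of $P$ with $\text{rank}(Q)=\gamma=\omega^{\omega^\xi}$ and a function $F\colon\lambda(\gamma)\to k+1$ with $F(\varsigma_Q(s,t))=f(s,t)$ for every $(s,t)\in\Lambda_2(Q)$. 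Because $\lambda(\gamma)=1$, we have $\varsigma_Q(s,t)=0$ for all such pairs, whence $f(s,t)=F(0)$ for every $(s,t)\in\Lambda_2(Q)$; that is, $f|_{\Lambda_2(Q)}$ is constant, and $Q$ (a subtree of the original $P$) has the required rank.

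There is essentially no obstacle here: the corollary is an immediate specialization of Theorem \ref{big show}. The only point worth spelling out is the identification $\lambda(\omega^{\omega^\xi})=1$, which is precisely the observation that an infinite multiplicatively indecomposable ordinal has a one-term multiplicative decomposition; this is what collapses the tractability conclusion (where the color depends on the separation) into genuine monochromaticity. The reduction to the case $\text{rank}(P)=\gamma$ via Proposition \ref{ted}$(x)$ is routine.
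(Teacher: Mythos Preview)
Your proof is correct and follows essentially the same approach as the paper: reduce to a subtree of rank exactly $\omega^{\omega^\xi}$, apply Theorem \ref{big show}, and use $\lambda(\omega^{\omega^\xi})=1$ to conclude monochromaticity. The only cosmetic difference is in the reduction step: you take $P\setminus P^\gamma$ via Proposition \ref{ted}$(x)$, whereas the paper picks a leaf $t\in\text{Leaves}(P^\gamma)$ and works in $P(t)$; both are equally valid.
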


\begin{proof} If $\text{rank}(P)=\omega^{\omega^\xi}$, let $R=P$. If $\text{rank}(P)>\omega^{\omega^\xi}$, fix $t\in \text{Leaves}(P^{\omega^{\omega^\xi}})$ and let $R=P(t)$. In either case, $\text{rank}(R)=\omega^{\omega^\xi}$. By applying Theorem \ref{big show} to the restriction of $f$ to $\Lambda_2(R)$, we obtain $F:1=\lambda(\omega^{\omega^\xi})\to k+1$ and $Q\subset R$ with $\text{rank}(Q)=\omega^{\omega^\xi}$ such that $f(s,t)=F(\varsigma_R(s,t))=F(0)$ for all $(s,t)\in \Lambda_2(Q)$.    That is, $f|_{\Lambda_2(Q)}\equiv F(0)$.

\end{proof}

\begin{corollary} $\mathfrak{R}_2$ is the class of multiplicatively indecomposable ordinals.

\label{bobw}
\end{corollary}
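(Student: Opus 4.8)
The plan is to show both inclusions between $\mathfrak{R}_2$ and the class of multiplicatively indecomposable ordinals. The forward inclusion $\mathfrak{R}_2 \subseteq \{\text{mult.\ indec.}\}$ has already been established in the expository discussion of Section 3 (the paragraph beginning ``Next we discuss coloring pairs in trees''), where for any $\gamma$ failing to be multiplicatively indecomposable we exhibited a $2$-coloring of $\Lambda_2(P)$ with no monochromatic subtree of rank $\gamma$. So it remains to prove that every multiplicatively indecomposable ordinal $\xi$ lies in $\mathfrak{R}_2$.

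First I would recall that the multiplicatively indecomposable ordinals are exactly $1$, $2$, and the ordinals of the form $\omega^{\omega^\xi}$ for some ordinal $\xi$; this is stated at the start of Section 3. The cases $\xi \in \{1,2\}$ are trivial: if $\text{rank}(P) \le 2$ then $\Lambda_2(P) = \varnothing$ unless there is a chain $s<t$, and in any event a single-chain or single-vertex subtree of rank equal to $\text{rank}(P)$ can be extracted, making any coloring vacuously constant on $\Lambda_2$ of that subtree. (Indeed $\text{rank}(Q)\le 2$ forces $\Lambda_2(Q)$ to be a single pair or empty, so constancy is automatic.) Thus the content is the case $\gamma = \omega^{\omega^\xi}$.

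For $\gamma = \omega^{\omega^\xi}$, the key observation is that such a $\gamma$ is additively indecomposable and, in the notation of Section 3, has $\lambda(\gamma) = 1$: its representation is $\gamma = \omega^{\omega^{\ee_0}}$ with the single exponent $\ee_0 = \xi$, i.e. $l = 0$. Now I apply Corollary \ref{cheese down} (or equivalently Theorem \ref{big show} directly): given any well-founded tree $P$ with $\text{rank}(P) \geq \omega^{\omega^\xi}$, any $k < \omega$, and any $f : \Lambda_2(P) \to k+1$, there is a subtree $Q \subseteq P$ with $\text{rank}(Q) = \omega^{\omega^\xi}$ such that $f|_{\Lambda_2(Q)}$ is constant. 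Taking $P$ with $\text{rank}(P) = \gamma$ exactly, this says precisely that $\gamma \in \mathfrak{R}_2$. Concretely: Theorem \ref{big show} produces $Q \subseteq P$ with $\text{rank}(Q) = \gamma$ and $F : \lambda(\gamma) = 1 \to k+1$ with $f(s,t) = F(\varsigma_Q(s,t))$ for all $(s,t) \in \Lambda_2(Q)$; since $\lambda(\gamma) = 1$ forces $\varsigma_Q \equiv 0$, we get $f|_{\Lambda_2(Q)} \equiv F(0)$, constant.

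There is no real obstacle here — the corollary is a direct bookkeeping consequence of the already-proved main theorem together with the classification of multiplicatively indecomposable ordinals; the ``hard part'' was Theorem \ref{big show} itself (and before that, establishing that every additively indecomposable ordinal is tractable via the machinery of contractions and acceptability). The one point to state carefully is that $\omega^{\omega^\xi}$ is the \emph{only} shape with $\lambda(\gamma) = 1$ among the infinite multiplicatively indecomposable ordinals, which is exactly the remark preceding Corollary \ref{cheese down}: the infinite multiplicatively indecomposable ordinals are precisely the infinite additively indecomposable ordinals whose product decomposition $\omega^{\omega^{\ee_0}}\cdot\ldots\cdot\omega^{\omega^{\ee_l}}$ has a single factor. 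Combining this with the forward inclusion from Section 3 completes the proof that $\mathfrak{R}_2$ is exactly the class of multiplicatively indecomposable ordinals.
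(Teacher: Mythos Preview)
Your proposal is correct and follows essentially the same approach as the paper: one inclusion is the Section~3 example (which the paper repeats verbatim in the proof, whereas you simply cite it), and the other inclusion is Corollary~\ref{cheese down}/Theorem~\ref{big show} applied to $\gamma=\omega^{\omega^\xi}$ together with the trivial cases $1,2$. One minor wording issue: the parenthetical ``$\text{rank}(Q)\le 2$ forces $\Lambda_2(Q)$ to be a single pair or empty'' is false for arbitrary $Q$ of rank $2$, but is fine for the single-chain $Q$ you have just selected, so the argument stands.
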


\begin{rem}\upshape We note that the class $\mathfrak{R}_2$ is precisely the class of ordinals $\delta$ such that for each $\alpha, \beta<\delta$, $\alpha\cdot \beta<\delta$.  The proofs above illustrate that this is not a coincidence, but the cause of our result. Decomposing a tree into subtrees whose ranks are left divisors of the rank of the entire tree is, in some sense, the only source of non-monochromaticity.

\end{rem}

\begin{proof}[Proof of Corollary \ref{bobw}] It is evident that $\{0,1,2\}\subset \mathfrak{R}_2$.    We know that for each ordinal $\xi$, $\omega^{\omega^\xi}\in \mathfrak{R}_2$.    By the remark preceding the theorem, it it enough to prove that if $\delta$ is an ordinal such that there exist $\alpha, \beta<\delta$ with $\alpha\cdot \beta\geqslant \delta$, then $\delta\notin \mathfrak{R}_2$. We repeat a previous example.

Let $P$ be a tree with $\text{rank}(P)=\delta$, noting that some such tree exists. Define $\varsigma:\Lambda_2(P)\to 2$ by $\varsigma(s,t)=0$ if there exists $\zeta$ such that $s,t\in P^{\alpha\cdot \zeta}\setminus P^{\alpha\cdot (\zeta+1)}$, and $\varsigma(s,t)=1$ otherwise.   We claim that if $Q\subset P$ is such that $\varsigma|_{\Lambda_2(Q)}$ is constant, then $\text{rank}(Q)\leqslant \max\{\alpha, \beta\}<\delta$, which will complete the proof.    First suppose that $Q\subset P$ is such that $\varsigma|_{\Lambda_2(Q)}\equiv 0$.   Then $$Q=\amalg_{\zeta<\beta} Q\cap (P^{\alpha\cdot \zeta}\setminus P^{\alpha \cdot (\zeta+1)}).$$  If it were not so, there would exist $(s,t)\in \Lambda_2(Q)$ such that $\varsigma(s,t)=1$.    Since $\text{rank}(Q\cap (P^{\alpha \cdot \zeta}\setminus P^{\alpha \cdot (\zeta+1)}))\leqslant \alpha$, $\text{rank}(Q)\leqslant \alpha$ in this case.

Now suppose $\varsigma|_{\Lambda_2(Q)}\equiv 1$.  Then $\text{rank}(Q\cap (P^{\alpha \cdot \zeta}\setminus P^{\alpha \cdot (\zeta+1)}))\leqslant 1$ for each $\zeta$.    From this we deduce, as in the proof of Lemma \ref{drain}, that $Q^\zeta\subset P^{\alpha \cdot \zeta}$ for each ordinal $\zeta$.  The proof is by induction, with the $\zeta=0$ and $\zeta$ a limit case trivial.     For the successor case, if $Q^\zeta\subset P^{\alpha \cdot \zeta}$, then since $$Q^\zeta\setminus P^{\alpha \cdot (\zeta+1)}\subset Q\cap (P^{\alpha \cdot \zeta}\setminus P^{\alpha \cdot (\zeta+1)})$$ cannot contain two comparable members $s,t$ (since if $s<t$ for $s,t$ in this set, $\varsigma(s,t)=0$), $Q^{\zeta+1}\subset P^{\alpha\cdot (\zeta+1)}$.      Now since $\text{rank}(P)=\delta\leqslant \alpha \cdot \beta$, $Q^\beta \subset P^{\alpha\cdot \beta}=\varnothing$, so $\text{rank}(Q)\leqslant \beta$.

\end{proof}

\end{document}